\newtheorem{lemma}{Lemma}
\newcolumntype{L}[1]{>{\raggedright\let\newline\\\arraybackslash\hspace{0pt}}m{#1}}
\newcolumntype{C}[1]{>{\centering\let\newline\\\arraybackslash\hspace{0pt}}m{#1}}
\newcolumntype{R}[1]{>{\raggedleft\let\newline\\\arraybackslash\hspace{0pt}}m{#1}}
\newcommand{\sM}{\begin{array}{ccccccccc}}
		\newcommand{\eM}{\end{array}}
\newcommand{\pd}[2]{\displaystyle\frac{\displaystyle\partial #1}{\displaystyle\partial #2}}
\newcommand{\td}[2]{\frac{{\rm d} #1}{{\rm d} #2}}
\newcommand{\Grad}[1]{{\rm Grad}\left( #1 \right)}
\newcommand{\Div}[1]{{\rm Div }\left( #1 \right)}
\renewcommand{\det}[1]{{\rm det }\lb #1 \rb}
\newcommand{\lb}{\left(}
\newcommand{\rb}{\right)}
\newcommand{\lbb}{\llbracket}
\newcommand{\rbb}{\rrbracket}
\newcommand{\la}{\langle}
\newcommand{\ra}{\rangle}
\newcommand{\sv}{\lb\begin{array}{ccccccccccccccccc}}
		\newcommand{\sV}{\begin{bmatrix}}
				\newcommand{\eV}{\end{bmatrix}}
		\newcommand{\ev}{\end{array}\rb}
\newcommand{\fempty}[1]{{}}
\newcommand{\sty}[1]{\mbox{\boldmath $#1$}}
\newcommand{\styy}[1]{{\mathbb{#1}}}
\newcommand{\fa}{\sty{ a}}
\newcommand{\fc}{\sty{ c}}
\newcommand{\fe}{\sty{ e}}
\newcommand{\ff}{\sty{ f}}
\newcommand{\fl}{\sty{ l}}
\newcommand{\fm}{\sty{ m}}
\newcommand{\ft}{\sty{ t}}
\newcommand{\fu}{\sty{ u}}
\newcommand{\fv}{\sty{ v}}
\newcommand{\fx}{\sty{ x}}
\newcommand{\fA}{\sty{ A}}
\newcommand{\fB}{\sty{ B}}
\newcommand{\fC}{\sty{ C}}
\newcommand{\fE}{\sty{ E}}
\newcommand{\fF}{\sty{ F}}
\newcommand{\fG}{\sty{ G}}
\newcommand{\fH}{\sty{ H}}
\newcommand{\fI}{\sty{ I}}
\newcommand{\fM}{\sty{ M}}
\newcommand{\fN}{\sty{ N}}
\newcommand{\fP}{\sty{ P}}
\newcommand{\fS}{\sty{ S}}
\newcommand{\fT}{\sty{ T}}
\newcommand{\fU}{\sty{ U}}
\newcommand{\fX}{\sty{ X}}
\newcommand{\fzero}{\sty{ 0}}
\newcommand{\ffA}{\styy{ A}}
\newcommand{\ffC}{\styy{ C}}
\newcommand{\ffI}{\styy{ I}}
\newcommand{\ffP}{\styy{ P}}
\newcommand{\ffR}{\styy{ R}}
\newcommand{\fsigma}{\mbox{\boldmath $\sigma$}}
\newcommand{\fDelta}{\mbox{\boldmath $\Delta$}}
\newcommand{\fxi}{\mbox{\boldmath $\xi $}}
\newcommand{\feps}{\mbox{\boldmath $\varepsilon $}}
\newcommand{\fGamma}{\mbox{\boldmath $\Gamma $}}
\newcommand{\fvarphi}{\mbox{\boldmath $\varphi $}}
\newcommand{\cA}{{\cal A}}
\newcommand{\cB}{{\cal B}}
\newcommand{\cE}{{\cal E}}
\newcommand{\cL}{{\cal L}}
\newcommand{\cO}{{\cal O}}
\newcommand{\cS}{{\cal S}}
\newcommand{\cV}{{\cal V}}
\newcommand{\scrS}{\mathscr{S}}
\newcommand{\ol}[1]{\overline{#1}}
\newcommand{\ul}[1]{\underline{#1}}
\newcommand{\ull}[1]{\ul{\ul{#1}}}
\newcommand{\WT}[1]{\widetilde{#1}}
\definecolor{Sblueaa}{cmyk}{1,0.6,0,0}
\definecolor{Sbluea}{cmyk}{1,0.4,0,0}
\definecolor{Sblueb}{cmyk}{0.7,0.2,0,0}
\definecolor{Sbluec}{cmyk}{0.5,0.1,0,0}
\definecolor{Sblued}{cmyk}{0.3,0.05,0,0}
\definecolor{Sbluee}{cmyk}{0.15,0.04,0,0}
\definecolor{Svbluea}{cmyk}{0.9,0.6,0,0}
\definecolor{Svblueb}{cmyk}{0.68,0.4,0,0}
\definecolor{Svbluec}{cmyk}{0.45,0.26,0,0}
\definecolor{Svblued}{cmyk}{0.27,0.12,0,0}
\definecolor{Sblacka}{cmyk}{0.5,0.2,0.2,0.85}
\definecolor{Sblackb}{cmyk}{0.35,0.14,0.14,0.6}
\definecolor{Sblackc}{cmyk}{0.25,0.1,0.1,0.43}
\definecolor{Sblackd}{cmyk}{0.15,0.06,0.06,0.26}
\definecolor{Sblacke}{rgb}{0.827451,0.8509804,0.8627451}
\definecolor{Sred100}{HTML}{EE1C25}
\definecolor{Sorange100}{HTML}{F36F23}
\definecolor{Syellow100}{HTML}{FFDD00}
\definecolor{Spetrol}{HTML}{00AAAD}
\definecolor{Sgreen100}{HTML}{8DC63F}
\definecolor{Spink100}{HTML}{EC008D}
\definecolor{Spurple100}{HTML}{812A91}
\definecolor{Syellow}{cmyk}{0,0.1,1,0}
\definecolor{Sorange}{cmyk}{0,0.7,1,0}
\definecolor{Sred}{cmyk}{0,1,1,0}
\definecolor{Spink}{cmyk}{0,1,0,0}
\definecolor{Spurple}{cmyk}{0.6,1,0,0}
\definecolor{Scyan}{cmyk}{1,0,0.4,0}
\definecolor{Sgreen}{cmyk}{0.5,0,1,0}
\definecolor{Sgreen}{cmyk}{0.5,0,1,0}
\definecolor{uniSgreen}{HTML}{93FF00}
\definecolor{uniSred}{HTML}{FF000B}
\definecolor{uniSpink}{HTML}{FF0098}
\definecolor{uniSorange}{HTML}{FF5D00}
\definecolor{uniScyan}{HTML}{00FBFF}
\colorlet{Sblackf}{Sblacke!70!white}
\colorlet{Sdgreen}{Sgreen!50!black}
\colorlet{Slred}{Sred!40!white}
\definecolor{uniSgray}{RGB}{62, 68, 76}
\colorlet{USredgray}{red!70!uniSgray}
\colorlet{uniSredgray}{red!70!uniSgray}
\colorlet{uniSgray90}{uniSgray!90!white}
\colorlet{uniSgray80}{uniSgray!80!white}
\colorlet{uniSgray70}{uniSgray!70!white}
\colorlet{uniSgray60}{uniSgray!60!white}
\colorlet{uniSgray50}{uniSgray!50!white}
\colorlet{uniSgray40}{uniSgray!40!white}
\colorlet{uniSgray30}{uniSgray!30!white}
\colorlet{uniSgray20}{uniSgray!20!white}
\colorlet{uniSgray10}{uniSgray!10!white}
\definecolor{Scyanlight}{rgb}{ 0.53333,0.87059,0.87451}
\definecolor{uniSblue}{HTML}{004191}
\colorlet{uniSblue80}{uniSblue!80!white}
\colorlet{uniSblue60}{uniSblue!60!white}
\colorlet{uniSblue40}{uniSblue!40!white}
\definecolor{uniSlightblue}{HTML}{00BEFF}
\colorlet{uniSlblue80}{uniSlightblue!80!white}
\colorlet{uniSlblue60}{uniSlightblue!60!white}
\colorlet{uniSlblue40}{uniSlightblue!40!white}
\definecolor{FFgreen}{rgb}{0.635,0.8275,0.1255}
\definecolor{FFdgreen}{rgb}{0.45,0.61,0.09}
\newcommand{\Stilde}{\raise.17ex\hbox{$\scriptstyle\sim$}}
\newcommand{\cvp}{+}
\newcommand{\cvm}{-}
\newcommand{\cvpm}{\pm}
\newcommand{\LIST}[3]{#1,\;#2,\;#3}
\newcommand{\SINDEX}[3]{[\LIST{#1}{#2}{#3}]}
\newcommand{\RINDEX}[3]{(\LIST{#1}{#2}{#3})}
\theoremstyle{thmstyleone}%
\theoremstyle{thmstyletwo}%
\theoremstyle{thmstylethree}%
\begin{document}

\title[FFT-based Homogenization at Finite Strains using Composite Boxels]{FFT-based Homogenization at Finite Strains using Composite Boxels (ComBo)}

\author[1]{\fnm{Sanath} \sur{Keshav}}\email{keshav@mib.uni-stuttgart.de}

\author*[1]{\fnm{Felix} \sur{Fritzen}}\email{fritzen@simtech.uni-stuttgart.de}

\author[2]{\fnm{Matthias} \sur{Kabel}}\email{matthias.kabel@itwm.fraunhofer.de}

\affil[1]{\orgdiv{SC Simtech, Data Analytics in Engineering}, \orgname{University of Stuttgart}, \orgaddress{ Universitätsstr. 32, \postcode{70569} Stuttgart, \country{Germany}}}

\affil[2]{\orgdiv{Lightweight Design and Insulating Materials}, \orgname{Fraunhofer-Institut für Techno- und Wirtschaftsmathematik ITWM}, \orgaddress{\city{Kaiserslautern}, \postcode{67663}, \country{Germany}}}

\abstract{Computational homogenization is the gold standard for concurrent multi-scale simulations (e.g., FE2) in scale-bridging applications. Often the simulations are based on experimental and synthetic material microstructures represented by high-resolution 3D image data. The computational complexity of simulations operating on such voxel data is distinct. The inability of voxelized 3D geometries to capture smooth material interfaces accurately, along with the necessity for complexity reduction, has motivated a special local coarse-graining technique called composite voxels \cite{Kabel2015}. They condense multiple fine-scale voxels into a single voxel, whose constitutive model is derived by the laminate theory. Our contribution generalizes composite voxels towards composite boxels (ComBo) that are non-equiaxed, a feature that can pay off for materials with a preferred direction such as pseudo uni-directional fiber composites. A novel image-based normal detection algorithm is devised which (i) allows for boxels in the firsts place and (ii) reduces the error in the phase-averaged stresses by around 30\% against the orientation cf. \cite{Kabel2015} even for equi-axed voxels. Further, the use of ComBo for finite strain simulations is studied in detail. An efficient and robust implementation is proposed, featuring an essential selective back-projection algorithm preventing physically inadmissible states. Various examples show the efficiency of ComBo against the original proposal by \cite{Kabel2015} and the proposed algorithmic enhancements for nonlinear mechanical problems. The general usability is emphasized by examining various Fast Fourier Transform (FFT) based solvers including a detailed description of the new Doubly-Fine Material Grid (DFMG). All of the studied schemes benefit from the ComBo discretization.}

\keywords{Homogenization, Fast Fourier Transform, Composite voxel, Composite boxel}



\maketitle




\section{Introduction}
\label{intro}
\subsection{Homogenization in engineering}
In the last decade, the quality of micro x-ray computed tomography (CT) images has steadily improved. Nowadays, standard CT devices have a spatial resolution below one $\upmu$m. They produce 3D images of up to 4096\textsuperscript{3} voxels. This permits a detailed view of the microstructure's geometry of composite materials all the way down to the limits of continuum mechanical theories and the necessity for discrete particle methods. The geometrical information itself is sufficient to detect defects in components by measuring, e.g., the pore space and the shape of the pores or by detecting the presence of micro-cracks. In order to understand the effect of such microscopic features, one has to solve PDEs on the high-resolution 3D image data. These simulations assist in the characterization of the effective mechanical behavior.

Due to the sheer size of today's CT images, the resulting computational homogenization algorithms face severe challenges related to the high computational demands. For instance, to compute effective linear elastic properties on a 4096\textsuperscript{3} CT image, the number of nodal displacement degrees of freedom amounts to approximately 206~Billion. The solution of problems of this size using conventional simulation methods such as the finite element (FE) method requires huge compute clusters \cite{Arbenz2008,Arbenz2014}.
These difficulties are commonly overcome by working with conventional FEM on a variety of smaller subsamples of moderate size \cite{Kanit:2003}. The resulting effective properties of the individual subsamples are averaged in post-processing, cf., for instance, \cite{Andrae2013a,Andrae2013b}. This method, however, does not exploit the available information gathered from the specimen.

\subsection{FFT-based solvers}
In contrast to the conventional FEM approach, the numerical homogenization method of Moulinec-Suquet \cite{MoulinecSuquet1994, MoulinecSuquet1998} operates on the voxels of a CT image directly: the set of unknowns is formed by the strains, i.e., one tensor for each voxel in the CT image. The solution algorithm works in place (i.e., matrix-free) such that, in practice, the size of the CT images to be treated is only restricted by the size of the memory and the affordable compute time. FFT-based schemes can handle arbitrary heterogeneity, phase contrast \citep[e.g.,][]{MichelMoulinecSuquet2001} and degree of compressibility of the materials. Generally, the number of iterations is independent of the grid size, depending only on the material's contrast, i.e., the maximum of the quotient of the largest and the smallest eigenvalue of the elastic tensor field, and on the geometric complexity. The solution of the linear algebraic system relies upon a Lippmann-Schwinger fixed point formulation, enabling, by use of fast Fourier transform (FFT), a fast matrix-free implementation. By changing the discretization to finite elements \cite{Willot2015} and finite differences \cite{Schneider2016} also infinite material contrast problems became solvable. Recent displacement-based implementations \cite{Leuschner2018} also allowed the use of higher order integration schemes without increasing the memory demands and with improved computational efficiency over strain-based algorithms also building on finite element technology \cite{Schneider2017,Leuschner2018}. A relation with Galerkin based methods was outlined by \cite{Vondrejc:2014} where the use of the efficient Conjugate Gradient (CG) method was mentioned. Interestingly, the CG method (as well as minres and other krylov solvers) is completely natural in FANS and FFT-$Q_1$ Hex \cite{Leuschner2018,Schneider2017}, including straightforward implementation.

\subsection{Composite voxel technique}
Despite the many advances regarding theory and algorithmic realization of FFT-based methods, the resolution of state-of-the-art computed tomography poses challenges for the numerics. A single, double-precision scalar field on a 4096\textsuperscript{3} voxel image as delivered by modern $\upmu$CT scanners already occupies 512~GB of memory. Thus, even performing linear elastic computations on such images requires either exceptionally equipped workstations or big compute clusters. This applies evermore so to inelastic computations, where additional history variables need to be stored, increasing the memory demand significantly.
	
To enable computations on conventional desktop computers or workstations that can still take into account relevant microstructural details of the fully resolved image, the composite voxel technique was developed for linear elastic, hyperelastic and inelastic problems \cite{LionelGelebart2015,Kabel2015,Kabel2016,Kabel2017}. A coarse-graining procedure serves as the initial idea, i.e., a number of smaller, typically $4^3=64$, voxels are merged into bigger voxels. Each of these so-called \textit{composite voxels} gets assigned an appropriately chosen material law which is based on a bi-phasic laminate. This laminate takes into account the exact phase volume fractions and the interface normal vector~$N$. Thereby, it can reflect the most relevant microstructural features, avoid staircase phenomena intrinsic to regular voxel discretizations and allow for a boost in accuracy in computational homogenization.

%

In many scenarios the edge lengths of the voxels are different. This can be due to the employed imaging technique, e.g., in serial sectioning by a focused ion beam and using scanning electron microscopy (FIB-SEM, \cite{Uchic:2006}). Another reason for the wish of using anisotropic voxel shapes is the presence of a preferred direction, e.g., in composites with preferred orientation such as long fiber reinforced materials studied by \cite{Fliegener:2014}. The consideration of anisotropic composite voxels -- called {\it boxels} in the present study -- leads to low quality normal orientations when using the original approach suggested by \cite{Kabel2015}. Likewise, low volume fractions can deteriorate the quality of the normal orientation.

Another issue arises depending on the material contrast and the local volume fraction in the composite voxels: If the soft phase of the two-phase laminate has a small volume fraction, the deformation within this phase is severely exaggerated. This typically leads to convergence issues of the Newton-Raphson method which evaluates the effective response of the composite voxel and is normally circumvented by limiting the volume fractions of either of constituents to be in the range of 5$\cdots$95\% and by resorting to simple  Voigt averaging otherwise, ruling out the aforementioned advantages of the composite voxels.

\subsection{Outline}
In the current study we target composite voxels for finite strain homogenization problems. In Section~\ref{sec:hom:problem} the homogenization problem is recalled. The foundations of FFT-based schemes are summarized in Section~\ref{sec:fft} including the Lippmann-Schwinger equation and an extension of the staggered grid approach of \cite{Schneider2016} towards improved local fields. The composite voxel technique is considered in detail in Section~\ref{sec:combo} including algorithmic improvements leading to increased robustness for finite strain problems.


\subsection{Notation}

The spatial average of a quantity over a domain $\cA$ with measure $A = \lvert \cA \rvert$ is defined as 
\begin{equation}
	\langle\cdot\rangle_{\cA}=\dfrac{1}{A} \int_{\cA}(\cdot) \; \mathrm{d} A \, .
\end{equation}
In the sequel bold face lowercase letters denote vectors (exceptions: material coordinate $\fX$, normal vector $\fN$, displacement $\fU$ and traction $\fT$), bold face upper case letters denote 2-tensors and blackboard bold uppercase letters (e.g., $\ffC$) denote 4-tensors. The inner product contracts all indices of two $k$-tensors while the usual linear mapping contracts the last $k$ indices of the first tensor with the following $k$-tensor. The double contraction operator $\ffA:\fB$ contracts the trailing two indices of $\ffA$ with the leading two indices of $\fB$. The outer tensor product is denoted by $\otimes$ and its symmetric version is given by $\otimes^{\rm s}$.

General vectors and matrices are denoted by single and double underlines, respectively. Note that in the sequel we focus on an orthonormal basis for the tensor fields. A vector representation for general ($\fF \leftrightarrow \ul{F}$) and symmetric 2-tensors ($\fS \leftrightarrow \ul{S}$) is given by
\begin{align}
    \ul{F} &= \sV F_{11} \\ F_{12}\\ F_{13}\\ F_{21} \\ \vdots \\ F_{33} \eV \in \ffR^9, &
    \ul{S} & = \sV S_{11} \\ S_{22} \\ S_{33} \\ \sqrt{2}S_{12} \\ \sqrt{2}S_{13} \\ \sqrt{2}S_{23} \eV \in \ffR^6. \label{eq:vector:not}
\end{align}
Note the different ordering and normalization factors of the Mandel-type notation versus the commonly used Voigt notation which is avoided due to the ambiguity of stress vs. strain like quantities in the vector representation. The notation \eqref{eq:vector:not} preserves the inner product for general 2-tensors (e.g., $\fF, \fP$) and symmetric 2-tensors (e.g., $\fS, \fE$), i.e.,
\begin{align}
    \fF \cdot \fP &= \ul{F}^{\sf T} \ul{P}, &
    \fS \cdot \fE &= \ul{S}^{\sf T} \ul{E} .
\end{align}
Likewise, 4-tensors are represented by $9\times 9$ and $6 \times 6$ (in the case of left and right sub-symmetry) matrices, respectively.

In view of spatial derivatives, the gradient ($\Grad{\cdot}$) and the divergence ($\Div{\cdot}$) with respect to the reference configuration are used. In the context of an internal surface $\scrS$ the jump operator is given as
\begin{align}
    \lbb u \rbb &= \underset{\epsilon \to 0_+}{\rm lim} \, u( \fx + \epsilon \fN ) - u(\fx - \epsilon \fN ) \qquad  (\fx \in \scrS),
\end{align}
where $\fN$ is the normal of $\scrS$ pointing from the second phase (referred to as inclusion phase) into the first phase (referred to as the matrix material).




\section{Homogenization Problem}
\label{sec:hom:problem}
We focus our attention to finite strain mechanics of hyperelastic solids.
Consider a material point $\fX$ of a body $\mathcal{B}_0$ in the reference configuration at time $t = 0$. The corresponding current position at time $t \in [0, T]$ is denoted by $\fx$ in the deformed domain $\cB(t)$. The motion of the body is given by 
\begin{align}
	\fvarphi(\fX,t) &: \, \mathcal{B} \times \ffR \to \ffR^d , &
	\fx &= \fvarphi(\fX,t),
\end{align}
where $d \in \{ 2, 3\}$ is the spatial dimension. In the sequel, the dependence on time $t$ is implicitly assumed but not reflected in the arguments for brevity. The displacement field is given by $\fU(\fX) = \fx - \fX$. The deformation gradient $\fF$ are defined as 
\begin{align}
	\fF &= \Grad{\fvarphi(\fX)} = \fI + \fH \, .
\end{align}
The volume change at a material point is given by the material Jacobian of the deformation map,
\begin{align}
	\td{v}{V} &= J = \operatorname{det}\fF > 0,
\end{align}
with ${\rm d}v$ the differential volume in the current configuration and ${\rm d}V$ denoting its counterpart in the reference configuration.
The right Cauchy-Green tensor is given by $\fC = \fF^\mathsf{T} \fF$. At a particular material point $\fX$ with an infinitesimal area $\mathrm{d}A$ with a unit normal vector $\fN$, we define the resulting traction vector~$\fT$ in terms of the first Piola-Kirchoff (PK1) stress tensor $\fP$,
\begin{equation}
	\fT = \fP \fN.
\end{equation}

In homogenization, the constitutive response on the structural (or macroscopic) scale of a body depends strongly on the underlying microstructure. In the following we consider a statistically homogeneous periodic microstructure that is described by a periodic representative volume element (RVE)
\begin{align}
    \Omega =  \left[-\frac{l_1}{2}, \frac{l_1}{2} \right] \times  \left[-\frac{l_2}{2}, \frac{l_2}{2} \right] \times  \left[-\frac{l_3}{2}, \frac{l_3}{2} \right] \, .
\end{align}
Separation of length scales is assumed, i.e., established homogenization principles apply without the need for advanced modeling techniques, e.g., based on filtering \cite{Yvonnet2014} or higher-order continuum theorieds \cite{Jaenicke2009}.


The objective of homogenization is the identification of the effective, macroscopic constitutive response
\begin{equation}
    \ol{\fP} = \left\la\fP\right\ra_\Omega \label{eq:Pbar}
\end{equation}
of micro-heterogeneous materials for given $\ol{\fF}=\la\fF\ra_\Omega$. Further, phase-averaged stresses, stress statistics and the algorithmic tangent operator can be of relevance. 
The effective quantities must satisfy the Hill-Mandel macro-homogeneity condition
\begin{align}
	\ol{\fP} \cdot \ol{\fF} &= \left\la \fP \right\ra_{\Omega} \cdot \left\la \fF \right\ra_{\Omega} = \left\la \fP(\fX) \cdot \fF(\fX)\right\ra_{\Omega} \, .
\end{align}
Periodic fluctuation boundary conditions of the form
\begin{align}
    \fU(\fX) &= (\ol{\fF}-\fI)\fX + \WT{\fU}(\fX) 
\end{align}
satisfy the Hill-Mandel condition~(see, e.g., \cite{Suquet1985}) and have proven versatile and efficient.
The tractions $\fT(\fX^\pm)$ are then anti-periodic for point-pairs $\fX^\pm \in \partial\varOmega^\pm$ on opposing faces of the RVE~$\Omega$.
The related function space for $\WT{\fU}$ is referred to as $\cV_\#$ which
is a subset of the Sobolev space of weakly differentiable functions~$H^1(\varOmega)$:
\begin{align}
    \cV_\# & = \left\lbrace \WT{\fU} \in H^1(\varOmega): \ \WT{\fU}(\fX^+) = \WT{\fU}(\fX^-)  \right\rbrace \, .
\end{align}
The homogenization problem \newcommand{\HOM}{{\bf (HOM)}}\HOM{} on $\Omega$ for prescribed deformation $\ol{\fF}$ reads:
\begin{align} 
    \text{find } \WT{\fU} & \in \cV_\# \label{eq:hom:start}\\
    \text{s. th.: }\Div{\fP} &= \fzero & & \text{in } \Omega \, .\label{eq:equilibrium} 
\end{align}



\section{FFT-based homogenization}
\label{sec:fft}


The solution of \HOM{} \eqref{eq:hom:start}-\eqref{eq:equilibrium} can either be obtained (in seldom cases) using analytical solution or by using discrete numerical techniques. The most significant methods for computational homogenization in solid mechanics are certainly the finite element method (FEM) \citep[e.g., FE\textsuperscript{2},][]{Feyel1999} and FFT-based homogenization.

The latter was proposed by Moulinec and Suquet \cite{MoulinecSuquet1994} in the early 1990s. It is based on the Lippmann-Schwinger equation in elasticity \cite{Kroener1977, ZellerDederichs1973} and avoids both, time consuming meshing needed by conforming finite element discretizations as well as the assembly of the related linear system. Therefore, the memory needed for solving the problem is significantly reduced compared with other methods. By virtue of the seminal Fast Fourier Transform algorithm \citep[FFT,][]{Cooley1965}, the compute time scales with $\cO(n \, \log (n) )$ where $n$ is the number of unknowns,  i.e., it is just slightly superlinear.

\newcommand{\Gzero}{{\fG^0}}
For finite strain problems the Lippmann-Schwinger equation reads
\begin{equation}
	\label{eq:nonlinearLSE}
	\fF = \ol{\fF} -\fGamma^0 (\fP - \ffC^0:\fF),
\end{equation}
with the Greens' operator
\begin{equation}
	\label{eq:Gamma0}
	\fGamma^0 ( \cdot ) = \Grad{\Gzero \Div{\cdot} }\, ,
\end{equation}
which is relying on the solution operator $\Gzero$ of a linear reference problem.
Lahellec, Moulinec, and Suquet \cite{Lahellec2001} proposed to solve \eqref{eq:nonlinearLSE} by the Newton-Raphson method and the linear Moulinec-Suquet fixed point solver. In contrast, Eisenlohr et al. \cite{Eisenlohr2013} suggested using the Moulinec-Suquet fixed point iteration on the nonlinear Lippmann-Schwinger equation for finite strains directly. Kabel et al. \cite{Kabel2014} carried over the idea of Vinogradov and Milton \cite{VinogradovMilton2008} and of G\'{e}l\'{e}bart and Mondon-Cancel \cite{GelebartMondonCancel2013} to combine the Newton-Raphson procedure with fast linear solvers to the geometrically nonlinear case.

The present work will also make use of the nonlinear conjugate gradient (CG) method introduced by Schneider \cite{Schneider2020} for small strains. Further, the FFT-accelerated solution of regular tri-linear hexahedral elements \citep[FFT-$Q_1$ Hex cf.,][]{Schneider2017} and in the closely related Fourier-Accelerated Nodal Solver \cite{Leuschner2018} will be used in the sequel, for which the nonlinear CG method has a perfectly natural interpretation as a preconditioner. 
An overview on alternative solution methods can be found in \cite{Schneider2021}.

Regarding the discretization we will compare the original discretization by Fourier polynomials of Moulinec-Suquet \cite{MichelMoulinecSuqeut1999} with the HEX8R discretization of Willot \cite{Willot2015} and the fully integrated HEX8 elements \cite{Schneider2017,fans2018}. In order to consistently evaluate material laws for the staggered grid discretization \cite{Schneider2016, Ospald2015}, we apply the idea of the double fine material grid (DFMG) to large strains. In contrast to the regime for small strains, which can be efficiently implemented for isotropically linear elastic materials by a suitable averaging of the shear moduli, the DFMG for large deformations requires multiple evaluations of the material routines, regardless of the complexity of the geometrically nonlinear material law. The technical details of the implementation can be found in the appendix \ref{app:staggeredGrid}.

\section{
Composites Voxels}
\label{sec:combo}
One of the main advantages but - at the same time - a major disadvantage of FFT-based techniques is the constraint to regular Cartesian grids: While they enable direct use of 3D image data, they are unable to capture the material interface accurately compared to interface-conforming discretizations. This is primarily due to the binarized nature of the microstructure, which leads to the so-called \emph{staircase approximation} of the interface. In order to capture the microscale effects sufficiently, a high grid resolution is hence needed, which--in turn--calls for high computational cost. In order to limit or even eliminate the staircase phenomenon without the need for a (distinct) grid refinement,  so-called composite voxels were previously suggested in \cite{Kabel2015, Merkert2015, Kabel2016, Kabel2017}. They enhance the existing binary discretization with special  voxels with effective material properties that depend on the phase volume fractions and the normal orientation~$\fN$ of the laminate.\\ 

Consider such a composite voxel $\Omega^e$ comprised of two phases denoted by $\Omega^e_\cvpm \subsetneq \Omega^e$, where $\cvp$ and $\cvm$ stand for inclusion and matrix phase, respectively. The fields corresponding to the two phases are represented as $(\cdot)_\cvpm$ for brevity, and either phase is assumed to be equipped with a hyperelastic strain energy density~$W_\cvpm$. Within $\Omega^e$ the material interface is approximated by a plane $\scrS^e$ leading to a rank-1 laminate defined by the interface normal vector $\fN\in \ffR^d$ in material configuration. The individual phase volume fractions are given by $c_\cvp$ and $c_\cvm$ where $c_\cvp + c_\cvm= 1$.\\

In previous works, different rules of mixture have been suggested, namely the Voigt (\textbullet\textsuperscript{V}) and Reuss (\textbullet\textsuperscript{R})\footnote{In the nonlinear kinematic regime, these correspond to the Taylor and Sachs approximation, respectively.} estimates correspond to upper and lower bounds of the mechanical response.

\newcommand{\Clam}{\ffC_{\square}^{\Vert}}
Inspired by laminate theory, \cite{Milton2002} suggests the definition of the effective elasticity tensor of the laminate $\Clam$ implicitly through
\begin{align} \label{mixingrule}
    \lb \ffP + \lambda \lb \Clam - \lambda \ffI \rb^{-1}\rb^{-1} = \nonumber \\
     \left\langle \lb \ffP + \lambda \lb \ffC - \lambda \ffI\rb^{-1}\rb^{-1}\right \rangle_{\Omega^e}\, .
\end{align}
Here, the fourth order tensor $\ffP$ depends on the normal~$\fN$ via
\begin{align}
    \mathbb{P}_{i j k l} &=  \frac{1}{2}\Big(N_{i} \delta_{j k} N_{l}+N_{i} \delta_{j l} N_{k} +N_{j} \delta_{i k} N_{l} \nonumber\\
    & \quad +N_{j} \delta_{i l} N_{k}\Big) 
    -N_{i} N_{j} N_{k} N_{l} 
\end{align}
for $i,j,k,l \in \{1, \dots, d\}$. Note that in laminate theory, the states in the $\pm$ phase are piece-wise constant. Hence, the averaging translates into
\begin{align}
    \la (\cdot) \ra_{\Omega^e} &= c_+ \, (\cdot)_+ \, + \,c_- \, (\cdot)_- .
\end{align}
The laminate mixing rule considers the interface orientation and introduces a parameter $\lambda > 0$ 
that must be larger than the leading eigenvalue of the individual stiffness tensors $\ffC_\cvpm$. Solving \eqref{mixingrule} for linear elastic materials involves $6$ inversions of $6\times 6$ matrices\footnote{Symmetry is exploited.} for each composite voxel individually, i.e., at every interface-related voxel or cubature point within an FFT-based simulation, which can lead to non-negligible overhead. Hence, a more numerically efficient and physics-motivated approach devoid of additional parameters is sketched in the following that can also handle geometric and material nonlinearities, if needed.


\subsection{Hadamard jump conditions for finite strain kinematics}
Following established laminate theory, the following assumptions hold:
\begin{itemize}
    \item The deformation gradient in $\Omega^e_+$ and $\Omega^e_-$ are related by a rank-1 jump along the normal orientation, i.e., for $\WT{\fa} \in\ffR^d$,
\begin{align}
	\llbracket\fF\rrbracket_{\scrS^e} = \fF_\cvp - \fF_\cvm = \WT{\fa} \otimes \fN \, .
	\label{eq:hadamard:F}
\end{align}
     \item The traction vector is continuous across the interface $\scrS^e$, i.e.,
\begin{align}
    	\llbracket\fT\rrbracket_{\scrS^e}= \llbracket\fP\rrbracket_{\scrS^e}  \fN &= \fzero,  &
    \fT_\cvp &= \fT_\cvm
    	\, .
	\label{eq:hadamard:T}
\end{align}
\end{itemize}
%
%
The kinematic compatibility on the interface is characterized by having a continuous deformation gradient tangential to the interface. This ensures that material point pairs remain identical. Condition \eqref{eq:hadamard:T} ensures that the interface is in static equilibrium.

In order to enforce \eqref{eq:hadamard:F}, we chose the following parameterization\footnote{(slightly different to that used, e.g., in \cite{Kabel2017})} of the deformation gradient tensors of the two material phases
\begin{align}
\label{eq:Fpm}
	\fF_\cvpm &= \fF_{\square} \pm \frac{1}{c_\cvpm} (\fa \otimes \fN),
\end{align}
where $\fa$ is related to $\WT{\fa}$ in \eqref{eq:hadamard:F} by a scaling constant and $\fF_{\square}$ is the prescribed deformation gradient on the composite voxel~$\Omega^e$ . The parameterization \eqref{eq:Fpm} preserves the volume average~$\fF_{\square}$ of the deformation gradient $\fF$ over the composite voxel $\Omega^e$ according to
\begin{align}
	\left\la \fF \right\ra_{\Omega^e} 
	= \fF_{\square} + \lb \frac{c_\cvp}{c_\cvp} - \frac{c_\cvm}{c_\cvm}\rb \fa \otimes \fN 
	 = \fF_{\square} \, .
\end{align}
In the finite strain context, not only the average of $\fF$ must be preserved, but also the total volume of the deformed material must remain constant under the rank-one perturbation:

\begin{lemma}
	The kinematic  rank-1 perturbation on the material interface $\scrS^e$ is volume preserving by construction.
\end{lemma}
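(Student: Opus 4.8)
The plan is first to make precise what ``volume preserving'' should mean here and then to reduce the whole claim to the rank-one structure of the perturbation $\fa \otimes \fN$. Since the deformed volume of the composite voxel is $\int_{\Omega^e} J \,\mathrm{d}V$ with $J = \det{\fF}$ and the fields are piecewise constant, the statement amounts to showing that the volume-averaged Jacobian equals the Jacobian of the already-preserved mean deformation gradient $\fF_{\square}$:
\begin{align}
    \la J \ra_{\Omega^e} = c_\cvp \det{\fF_\cvp} + c_\cvm \det{\fF_\cvm} \stackrel{!}{=} \det{\fF_{\square}} \, .
\end{align}
This is not automatic, since $\la \det{\fF} \ra \neq \det{\la \fF \ra}$ for a generic two-field split; it will hold precisely because the jump is rank one and is weighted by $1/c_\cvpm$ in \eqref{eq:Fpm}.

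Next I would record the two ingredients that make the determinant collapse. Admissibility $J = \det{\fF} > 0$ ensures that $\fF_{\square}$ is invertible, and $\fa \otimes \fN$ has rank one; hence the map $t \mapsto \det{\fF_{\square} + t\,\fa \otimes \fN}$ is affine in $t$, because in the multilinear column expansion every contribution picking the perturbation in two or more columns vanishes (those columns are all proportional to $\fa$). Concretely, the matrix determinant lemma gives
\begin{align}
    \det{\fF_{\square} \pm \tfrac{1}{c_\cvpm}\, \fa \otimes \fN} = \det{\fF_{\square}} \lb 1 \pm \tfrac{1}{c_\cvpm}\, \fN \cdot \fF_{\square}^{-1} \fa \rb \, ,
\end{align}
whose linear coefficient equals $\cof{\fF_{\square}} : \lb \fa \otimes \fN \rb = \det{\fF_{\square}}\, \fN \cdot \fF_{\square}^{-1} \fa$. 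Applying this to the parameterization \eqref{eq:Fpm} yields $\det{\fF_\cvp}$ and $\det{\fF_\cvm}$ in closed form.

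The computation then closes itself: forming the weighted average, the prefactors $c_\cvpm$ exactly cancel the $1/c_\cvpm$ scalings in the first-order terms, so that
\begin{align}
    c_\cvp \det{\fF_\cvp} + c_\cvm \det{\fF_\cvm} = (c_\cvp + c_\cvm)\det{\fF_{\square}} + \beta - \beta = \det{\fF_{\square}} \, ,
\end{align}
with $\beta = \det{\fF_{\square}}\, \fN \cdot \fF_{\square}^{-1} \fa$ and $c_\cvp + c_\cvm = 1$. The only real subtlety to verify is that the two first-order contributions appear with equal magnitude and opposite sign --- which is exactly what the $\pm 1/c_\cvpm$ weighting secures --- and that no quadratic-or-higher terms in the amplitude survive, both of which follow from the rank-one nature of $\fa \otimes \fN$. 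Invertibility of $\fF_{\square}$, guaranteed by $\det{\fF_{\square}} > 0$, is what permits dividing out $\det{\fF_{\square}}$ and invoking the determinant lemma in the first place.
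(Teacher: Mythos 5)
Your proof is correct and follows essentially the same route as the paper: it invokes the matrix determinant lemma on the rank-one parameterization $\fF_\cvpm = \fF_{\square} \pm \tfrac{1}{c_\cvpm}\,\fa\otimes\fN$, obtains $J_\cvpm = \bigl(1 \pm \tfrac{1}{c_\cvpm}\,\fa^{\mathsf{T}}\fF_{\square}^{-\mathsf{T}}\fN\bigr)\det{\fF_{\square}}$, and observes that the $c_\cvpm$-weighted average cancels the first-order terms. The added remarks on affineness of $t\mapsto\det{\fF_{\square}+t\,\fa\otimes\fN}$ and the cofactor identity are correct but only elaborate on what the determinant lemma already delivers.
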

\begin{proof}
	Suppose $\fA$ is an invertible $n\times n$ matrix and $\fu$, $\fv$ are column vectors of size $n$. Then the matrix determinant lemma states that
	\begin{equation} \label{matdetlemma}
		\det{\fA + \fu \fv^\mathsf{T}} = (1 + \fv^\mathsf{T} \fA^{-1} \fu) \; \det{\fA}.
	\end{equation}
	The deformation gradient~$\fF_{\square}$ is regular by definition since material inversion is not allowed. Thus, by setting $\fA \gets \fF_{\square}$ and $\fu\gets \pm \fa/c_\cvpm, \, \fv \gets \fN$ the material Jacobian $J_\cvpm$ in the two phases can be computed using the matrix determinant lemma
	\begin{align}
		J_\cvpm &=  \det{\fF_\cvpm}
		= \left( 1 \pm \frac{1}{c_\cvpm} \fa^{\mathsf{T}} \fF_{\square}^{-\mathsf{T}}\fN \right) \det{\fF_{\square}} \, . 
		\label{eq:Jpm}
	\end{align}
	Averaging $J$ over the composite voxel $\Omega^e$ we obtain
	\begin{align}
		\la J\ra_{\Omega^e} &= c_\cvp J_\cvp + c_\cvm J_\cvm  = \det{\fF_{\square}} = J_{\square}\, .
	\end{align}
\end{proof}

To obtain the gradient jump vector $\fa$ the equilibrium of the tractions on the interface cf. \eqref{eq:hadamard:T} must be granted, see also \cite{Kabel2017}:
\begin{align} \label{tractionbalance}
	\ff(\fa) & = \fT_\cvp - \fT_\cvm \overset{!}{=} \mathbf{0}.
\end{align}
Examining the relative volume in $\Omega^e_\cvpm$, additional constraints on $J_\cvpm$ emerge:
\begin{align}
    \label{eq:Jpm:volumeConstraint}
    J_\cvp & \overset{!}{>} 0 \, , & J_\cvm & \overset{!}{>} 0 \, .
\end{align}
These constraints imposed on \eqref{tractionbalance} are essential to gain robustness, see Section~\ref{subsec:backproj}.

\subsubsection{Algorithmic implementation}
In the case of large strain kinematics, the system \eqref{tractionbalance} is always nonlinear. Hence, it must be solved iteratively, e.g., by using a Newton-Raphson scheme with the Hessian
\begin{align}
	\fDelta_f = \td{\ff(\fa)}{\fa} &= \lb\pd{ \fP_\cvp}{ \fF_\cvp} \frac{\partial \fF_\cvp}{\partial \fa} - \frac{\partial \fP_\cvm}{\partial \fF_\cvm} \frac{\partial \fF_\cvm}{\partial \fa}\rb\fN.  \label{eq:Hessian:F:0}
\end{align}
The Hessian in index notation can be written in terms of the 2-tensors $\fP_\cvpm$ and $\fF_\cvpm$ as
\begin{align}
	\lb \Delta_f \rb_{ik}
	&= N_J \left( \frac{1}{c_\cvp}\mathbb{A}^{iJkL}_\cvp + \frac{1}{c_\cvm}\mathbb{A}^{iJkL}_\cvm\right) N_L,  
	 \label{eq:hessian}
\end{align}
where, $\ffA_\cvpm = \pd{\fP_\cvpm}{\fF_\cvpm} =  \dfrac{\partial^2 W_\cvpm}{\partial\fF_\cvpm \otimes \partial \fF_\cvpm}$.
The constitutive tangent operator $\ffA_\cvpm$ can directly be obtained from the hyperelastic potentials~$W_\cvpm$.
Using a vector notation for $\fF, \fP$ and the related matrix representation $\ull{A}_\cvpm$ of $\ffA_\cvpm$, the Hessian gets
\begin{align}
    \ull{\Delta}_f &= \ull{D}^{\sf T} \lb \frac{\ull{A}_\cvp}{c_\cvp} + \frac{\ull{A}_\cvm}{c_\cvm} \rb \ull{D} \label{eq:Hessian:F}
\end{align}
with the matrix $\ull{D}$ defined via
\begin{align}
	(\fa \otimes \fN) \rightarrow \ull{D} \; \underline{a} = {\small
	\begin{bmatrix}
		N_1 & 0 & 0\\
		0 & N_2 & 0 \\
		0 & 0 & N_3 \\
		 N_1 & 0 & 0\\
		0 & N_2 & 0 \\
		0 & 0 & N_3 \\
		N_1 & 0 & 0\\
		0 & N_2 & 0 \\
		0 & 0 & N_3 
	\end{bmatrix}
	\begin{bmatrix}
		a_1 \\ a_2 \\ a_3 
	\end{bmatrix}}.
\end{align}
Note the straightforward symmetric structure of 
the matrix $\ull{\Delta}_f$ cf. \eqref{eq:Hessian:F}. Within  iteration $[k]$ the Newton-Raphson update reads 
\begin{align}
	\ul{a}^{[k]} &= \ul{a}^{[k-1]} - \lb \ull{\Delta}_f^{[k-1]}\rb^{-1} \ul{f}(\ul{a}^{[k-1]}) \, .
\end{align}
The full algorithm for the naive Newton-Raphson (NR) update is given in Algorithm~\ref{alg:combo_NR}. Note that each iteration requires a single  $d\times d$ matrix inversion. Once convergence is achieved, i.e., once the tractions on the interface are in balance, the first Piola-Kirchoff stress is
\begin{align}
    \fP_\square &= c_\cvp \fP_\cvp + c_\cvm \fP_\cvm 
    \, .
    \label{eq:Peff:Seff}
\end{align}
The unconditionally symmetric effective stiffness matrix can be computed from
\begin{align}
     \ull{\delta A} &= \ull{A}_\cvp - \ull{A}_\cvm \\
    \pd{\ul{P}_\square}{\ul{F}_\square} = \ull{A}_{\square} & =  \lb c_\cvp \ull{A}_\cvp + c_\cvm \ull{A}_\cvp \rb \nonumber \\
	& \quad -  \ull{\delta A} \, \ull{D} \; 	\ull{\Delta}_f^{-1}  \ull{D}^{\mathsf{T}} \ull{\delta A} \, .
\end{align}

\begin{algorithm}
	\caption{Newton-Raphson (NR) algorithm solving for gradient jumps in composite boxels}
	\begin{algorithmic}
		\Require $ \fF_\square$
		\Ensure $\fT_\cvp = \fT_\cvm$
		\State $\fa \gets \fzero$ (or any admissible choice)\Comment{initialization}
        \State $\fF_\cvpm \gets \fF_\square \pm \dfrac{1}{c_\cvpm} \fa \otimes \fN$
        \State $\fP_\cvpm, \; \ffA_\cvpm \gets$\ {material model}$\;(\fF_\cvpm)$
        \State $\ff \gets (\fP_\cvp - \fP_\cvm)\fN$ \Comment{compute initial residual}        \State $\cE_\square \gets \vert \ff \vert$ \Comment{initial error}
        \While{$\mathcal{E}_\square > \epsilon $} \Comment{check convergence}
        \State $\fa \gets \fa -\fDelta_f^{-1} \ff$ \Comment{naive NR-update}
        \State $\fF_\cvpm \gets \fF_\square \pm \dfrac{1}{c_\cvpm} \fa \otimes \fN$ \Comment{update $\fF_\cvpm$}
        \State $\fP_\cvpm, \; \ffA_\cvpm\gets$\ {material model}$\;(\fF_\cvpm)$
        \State $\ff \gets (\fP_\cvp - \fP_\cvm)\fN$ \Comment{update residual}
        \State $\cE_\square \gets \vert \ff \vert$  \Comment{update error}
        \EndWhile
	\end{algorithmic}
	\label{alg:combo_NR}
\end{algorithm}

\subsection{Robust algorithm via selective back-projection} \label{matdetlemmasection}\label{subsec:backproj}
Unfortunately, a naive update of the gradient jump vector $\fa\gets \fa - \fDelta_f^{-1} \ff$ sometimes leads to physically unacceptable iterates: the local volume within $\Omega^\cvpm$ can get negative which is inadmissible, i.e., contradicting the constraints \eqref{eq:Jpm:volumeConstraint}. 
%
%

Robustness can be enhanced by restricting the material Jacobian~$J_\cvpm$ in both phases to be strictly positive cf.~\eqref{eq:Jpm:volumeConstraint}. Previously, this problem was addressed in \cite{Kabel2016} by limiting the overall step width of the NR iterations, if the constraint \eqref{eq:Jpm:volumeConstraint} is not met.

We choose a different approach that can improve the convergence behavior building on the matrix determinant lemma \eqref{matdetlemma}. It allows to rewrite $J_\cvpm$ as
\begin{eqnarray}
	J_\cvpm  
	&=& \left( 1 \pm \frac{1}{c_\cvpm} \fa^{\mathsf{T}} \! \cdot \! \fF_{\square}^{-\mathsf{T}}\fN \right) J_{\square} > 0  .
\end{eqnarray}
We define
\begin{align}
\beta^\cvpm_{\rm c} &=  \mp \frac{c_\cvpm}{\|\fF_{\square}^{-\mathsf{T}}\fN\|}, &
\fm_\beta &=  \frac{\fF_{\square}^{-\mathsf{T}}\fN}{\|\fF_{\square}^{-\mathsf{T}}\fN\|}
\end{align}
and, with help of $\fm_\beta$ the orthogonal projectors
\begin{align}
   \fM_\beta^\parallel &= \fm_\beta \otimes \fm_\beta ,  &
    \fM_\beta^\perp &= \fI - \fM_\beta^\parallel \, .\label{eq:M:proj}
\end{align}
The matrix determinant lemma implies that, every NR iterate $\fa$ must satisfy 
\begin{equation} \label{matrixlemma}
	\beta^\cvp_{\rm c} < \fa^{\sf T} \fm_\beta < \beta^\cvm_{\rm c} \, ,
\end{equation}
i.e., a condition that is trivial to check. Most importantly, $\fm_\beta$ is independent of the current iteration, i.e., it can be precomputed.

If starting from a \textit{previously admissible point} $\fa_0=\fa^{[k-1]}$ an inadmissible iterate $\fa_1 \gets \fa_0 + \delta \fa$ is detected,  we compute $\beta_i = \fa_i \cdot \fm_\beta$ ($i\in\{0, 1\}$). Obviously $\beta_0$ is admissible while $\beta_1$ is not. A new selectively back-projected admissible coordinate $\beta_*$ is calculated to be the mid point between the admissible $\beta_0$ and the critical value $\beta_{\rm c} \in \{ \beta_\cvp, \beta_\cvm\}$ (the value which is exceeded by $\beta_1$ is taken, see Algorithm~\ref{alg:backprojection}):
\begin{align}
	\beta_* & = \frac{\beta_0 +\beta_{\rm c}}{2}.
\end{align}
Other than the approach suggested in \cite{Kabel2016} our selective back-projection yields a new (now admissible) iterate by adjusting only the part of $\fa$ contributing to $J_\cvpm$:
\begin{align}
	\fa_* &= \fa_0 + \beta_* \; \fm_\beta = \fa_1 + (\beta_* - \beta_1 ) \; \fm_\beta.
\end{align}
It leaves the part of $\fa$ not interacting with the volume constraint -- that is the part of $\fa_1$ that is orthogonal to $\fm_\beta$ -- unaltered. Only the part of $\fa_1$ that is co-linear with $\fm_\beta$ is modified such that a valid iterate is obtained. Using the projectors \eqref{eq:M:proj} this is equivalent to:
\begin{align}
    \fM_\beta^\perp \;\fa_* &= \fM_\beta^\perp \;\fa_1, &
    \fM_\beta^\parallel\; \fa_* &= \beta_* \;\fm_\beta \,. \label{eq:backproj:result}
\end{align}
This induces that a large part of the NR update is preserved. No additional hyperparameters, line searches and constitutive evaluations are required.

\begin{algorithm}
	\caption{Selective back-projection of inadmissible $\fa_1$ for previously admissible $\fa_0$}
	\begin{algorithmic}
		\Require inadmissible $\fa_1$; $\fa_0$; $\fm_\beta,\;\beta^\cvpm_{\rm c}$
		\Ensure $\beta^\cvp_{\rm c} < \fa_*^{\sf T} \fm_\beta < \beta^\cvm_{\rm c}$
		\State $\beta_1 \gets \fa_1^\mathsf{T}\fm_\beta$, $\beta_0 \gets \fa_0^\mathsf{T}\fm_\beta$\Comment{initialize}
		\If{$\beta_1 \leq \beta^\cvp_{\rm c}$} \Comment{get critical value $\beta_{\rm c}$}
		\State $\beta_{\rm c}=\beta^\cvp_{\rm c}$
		\ElsIf{$\beta_1 \geq \beta^\cvm_{\rm c}$}
		\State $\beta_{\rm c}=\beta^\cvm_{\rm c}$
		\EndIf
		\State $\beta_* \gets (\beta_0+\beta_{\rm c})/2$ \Comment{get admissible $\beta$ value}
		\State $\fa_* \gets \fM_\beta^\perp \fa_1 + \beta_* \; \fm_\beta$ \Comment{update $\fa$ accordingly}
	\end{algorithmic}
	\label{alg:backprojection}
\end{algorithm}

The increased robustness due to the selective back-projection is of utmost relevance for actual finite strain simulations, particularly in FE\textsuperscript{2} like settings. This holds {\it a fortiori} if composite voxels with a rather small phase volume fractions go along with pronounced phase contrast and high load increments, see Section~\ref{fig:bp_polyhedron1}.

As mentioned earlier, our algorithm shares similarities with the backtracking algorithm proposed in \cite{Kabel2016}. Both the approaches use the matrix determinant lemma to check for inadmissible iterates. The backtracking of \cite{Kabel2016} is employed as a line search strategy to compute an optimal step size in the admissible range, leading to scalar backtracking of $\fa$ and a damped NR method. This induces additional line search parameters as well as additional function evaluations. Contrary to that, our selective back-projection algorithm requires no extra constitutive evaluations and ensures an admissible update unconditionally in the absence of additional hyperparameters.

\section{Identification of the normal vector for composite voxels and boxels}
\label{sec:normal}
\subsection{Existing procedures and complications}
Composite voxels and boxels (non-equiaxed voxels) require the normal vector~$\fN$ alongside the phase volume fractions $c_\cvpm$. While $c_\cvp$ and $c_\cvm$ are easy to compute from averaging over the composite voxel or boxel, identifying the normal vector~$\fN$ is not without issues. It is also evident that the quality of the normal information is critical to the performance of the composite boxels. The original proposal of Kabel and colleagues~\cite{Kabel2015} uses the direction of the line connecting the barycenter of the dominant of the two phases within the composite voxel/boxel against the center of the voxel as an approximation of the normal vector, see Figure~\ref{fig:normal:kabel}. A trivial computation shows that this is equivalent to having the normal defined via the connecting line of the barycenters:
\begin{align}
    \WT{\fN} &= \frac{ \fc_- - \fc_+ }{\vert \fc_- - \fc_+ \vert} . \label{eq:normal:kabel}
\end{align}
In the present work, the normal is assumed to point out of the inclusion phase (indexed by $\cvp$).
\begin{figure*}
	\centering
	\includegraphics[width=\textwidth]{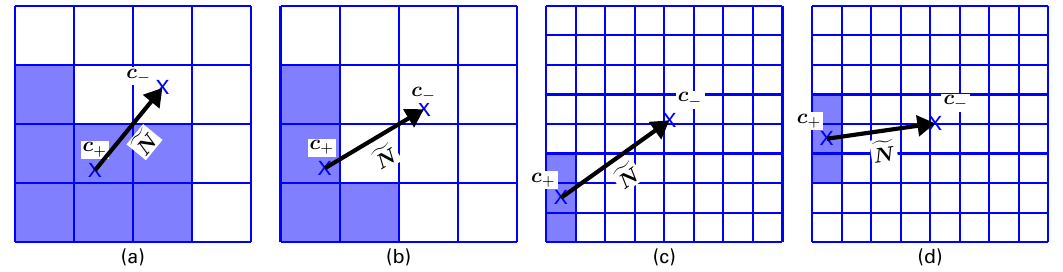}
	\caption{\protect Normal direction cf. \cite{Kabel2015}: $\WT{\fN}$ cf. \eqref{eq:normal:kabel} is the direction between the barycenters of the phases for different examples}
	\label{fig:normal:kabel}
\end{figure*}
The approach \eqref{eq:normal:kabel} has several advantages to it: The implementation is rather trivial, the computation is rapid, and it is easy to guarantee that the normal~$\fN$ points from phase~1 (denoted inclusion phase in the sequel) into phase~0 (denoted matrix phase in the sequel). However, the use of the barycenters $\fc_\pm$ for the normal detection is not without issues, as can be seen from the examples shown in Fig.~\ref{fig:normal:kabel}. At low volume fractions, the normal direction will depend only on the position of the phase rather than on the actual interface (compare (c) and (d) in Fig.~\ref{fig:normal:kabel}) and the method is not working for boxels that differ from equiaxed composite voxels by having (sometimes pronounced) aspect ratios, see Figure~\ref{fig:normal:kabel2}. This occurs if the number of voxels inside the composite boxel varies along the different edges (with fine scale voxels being equiaxed, e.g., Fig.~\ref{fig:normal:kabel2}, top). For instance, this can be useful in order to reduce the resolution in pseudo-unidirectional fiber reinforced materials; see Section~\ref{subsec:complex} for examples.


\begin{figure}
	\centering
	\includegraphics[width=\columnwidth]{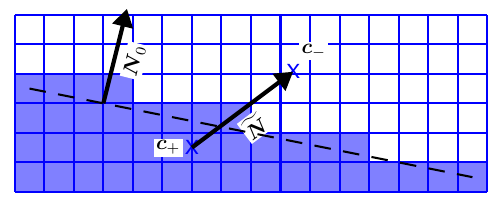}
    \caption{\protect Normal direction $\WT{\fN}$ from \eqref{eq:normal:kabel} cf. \cite{Kabel2015} for a 2D boxel (16$\times$6 fine-scale grid, equiaxed fine scale); the true normal $\fN_0$ is provided for comparison}
	\label{fig:normal:kabel2}
\end{figure}


In order to allow for an improved normal computation over \eqref{eq:normal:kabel}, the authors suggest a novel strategy that can be broken down into two steps:
\begin{enumerate}[label={\bf [N.\arabic*]},leftmargin=12mm]
	\item compute an indicator for the interface between the phases using a discrete Laplacian resulting into discrete weights on the fine grid;
	\item within composite boxels that contain a material interface, define the normal vector via a minimization problem.
\end{enumerate}
The individual steps are described below. A free python implementation is available via an open access software accessible via the Github (\cite{github:combo:normal}) including the option to easily process HDF5 files \cite{hdfgroup} (see also the documentation and tutorial given in the repository's jupyter notebook).

\subsection{Interface indication via a discrete Laplacian}
In the following, the Laplace filter commonly used in image processing is introduced. It can be employed for edge detection. For simplicity, the Laplace stencil is defined by building on a first order finite difference gradient operator along each coordinate axis. Therefore, the 3D stencil illustrated in Figure~\ref{fig:3d:stencil} is utilized.

\begin{figure}
\centering
\includegraphics[scale=1.2]{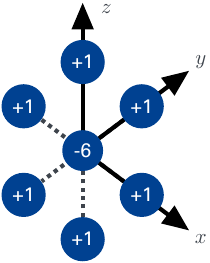}
\caption{3D stencil used for the Laplacian (here $h_1=h_2=h_3=1$ for simplicity)}
\label{fig:3d:stencil}
\end{figure}

In 1D for grid spacing $h>0$, the derivative at position $x_i$ and the second derivative (gathered from the subsequent application of the first derivative) read
\begin{align}
	\pd{f(x_{i-1})}{x} & \approx f'_{i-1} = \frac{f_{i} - f_{i-1}}{h}, \\
	\pd{f(x_{i})}{x} & \approx f'_{i} = \frac{f_{i+1} - f_i}{h}, \\
	\pd{f^2 (x_i)}{x^2} & \approx \frac{1}{h} ( f'_i - f'_{i-1} ) \nonumber\\
	&= \frac{1}{h^2} \lb f_{i-1} - 2 f_i + f_{i+1}\rb.
\end{align}
Setting $f_j=f(x_j)$ to value 1, if the inclusion phase is found at $x_j$ and to 0 otherwise, the Laplacian will be 0, if and only if all pixel values within the stencil are identical. Therefore, only point triples hosting more than one phase will lead to a nonzero Laplacian. Further, pixels with positive and negative values will be found, which represent inside and outside voxels related to the interface, respectively. For 2D and 3D boxels, the stencil is composed of uniaxial Laplace stencils along the coordinate axes, using the respective $h$ value corresponding to the fine-scale boxel dimension along the respective direction $(h_x, h_y, h_z)$, see also Fig.~\ref{fig:3d:stencil}. The Laplace stencil will be weighted by the boxel volume. Thereby, the scalar factor will get a neat physical interpretation:
\begin{align}
	r_i &= r(l_i) = \frac{l_j l_k}{l_i} \quad (i \neq j, \ i \neq k, \  j \neq k).
\end{align}
The ratio $r_i$ expresses the ratio of the area of the boxel face with normal direction $\fe_i$ divided by the boxel dimension along direction~$\fe_i$. After application of the stencil~$\cS$ to the image the absolute value is taken to gain the discrete weights
\begin{align}
	w_{ijk} &= \vert \cS \ast \chi \vert_{ijk} \qquad \nonumber \\ &\lb i \in \{ 1, n_1 \}, \ j \in \{1,n_2\}, \  k \in \{ 1,n_3\} \rb.
\end{align}
The convolution is carried out in the Fourier domain which automatically enforces periodic boundary conditions for the normal detection of inclusions crossing the edges of the computational domain. Of course, also a direct computation would be feasible at comparable compute cost. 
The weights are now processed for each composite boxel attributed with multiple material phases, i.e., with volume fraction $0 < c_\cvm < 1$. On each of these boxels, a weighted least squares problem is set up in order to identify the normal orientation. Therefore, on the boxel $\Omega_{\rm B}$ the second moment tensor 
\begin{align}
	\ull{M} = \sum_{ \{i,j,k\} \in \Omega_{\rm B}} w_{ijk} \fx_{ijk}\otimes \fx_{ijk} \in Sym_+(\ffR^3)
\end{align}
is computed, where $\fx_{ijk}$ denotes the coordinates of the voxel with discrete coordinates $(i,j,k)$\footnote{It is assumed that $\fx_{ijk}$ is relative to the barycenter of $\Omega_{\square}$.}.
Next, the eigenvector matching the smallest eigenvalue of $\ull{M}$ is used as initial normal $\fN$. In a second step, the direction of the normal $\fN$ is identified. Therefore, the vector connecting the barycenters of the material phase within the boxels $\WT{\fN}$ cf. \eqref{eq:normal:kabel} of the original approach \cite{Kabel2015} is considered to identify the proper sign of the proposed $\ul{N}$:
\begin{align}
	\fN & \gets  \left\lbrace \begin{array}{rl} \fN & \quad\text{if }\WT{\fN} \cdot \fN > 0 \\ -\fN & \quad\text{else.} \end{array} \right. \label{eq:normal:flip}
\end{align}
Thereby, the normal is guaranteed to point out of the $\cvp$ phase.

The example shown earlier in Fig.~\ref{fig:normal:kabel2} has been used with the new approach, see Fig.~\ref{fig:normal:new:1}. The interface voxels are shown in red with darker tones denoting increased weights. Note the good correlation of the analytically defined normal $\fN_0$ used to generate the discrete image and the normal reconstructed using the new approach. The colinearity of the two vectors is $\fN_0 \cdot \fN = 0.99998$ where the interface detection was effected using matching padding for the analytical normal. This compares against $\fN_0 \cdot \WT{\fN} = 0.7761$ for the normal shown in Fig.~\ref{fig:normal:kabel2} obtained from \eqref{eq:normal:kabel}.

\begin{figure}[!h]
	\centering
	\includegraphics[width=\columnwidth]{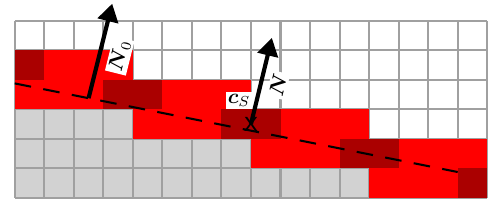}
	\caption{\protect Normal vector for the same example as in Fig.~\ref{fig:normal:kabel2} but using the new algorithm; interface voxels are red; $\fc_S$ is the barycenter of the interface}
	\label{fig:normal:new:1}
\end{figure}

Another comparison is shown in Fig.~\ref{fig:normal:new:3}. Here an input image consisting of 256\textsuperscript{3} voxels containing a centered sphere of radius $r=0.4\,L$ (with $L$ denoting the edge length of the cube) is coarsened using composite voxels of size 32\textsuperscript{3}, i.e., the information is compressed by a factor 32\textsuperscript{3}=32\,768 yielding a coarse scale resolution of just 8\textsuperscript{3}. For each composite voxel the normal is computed using either the approach from \eqref{eq:normal:kabel} \cite{Kabel2015} (Fig.~\ref{fig:normal:new:2:b}) and the new approach presented in this Section (Fig.~\ref{fig:normal:new:2:d}). In this visualization the actual facets are reconstructed from the normal and the volume fraction~$c_\cvp$. By the metric of vision, the normals of the old approach graphs are less regular. Obviously the facets reconstructed from these are also not leading to an accurate approximation of the curved surface of the sphere (e.g., gaps/overlaps). Our new procedure leads to visually more accurate normals. This is confirmed by the planar facet reconstruction that is almost entirely devoid of gaps and overlaps.


\begin{figure*}[!h]
    \begin{subfigure}[b]{0.515\columnwidth}
         \centering \captionsetup{justification=centering}
         \includegraphics[width=\textwidth]{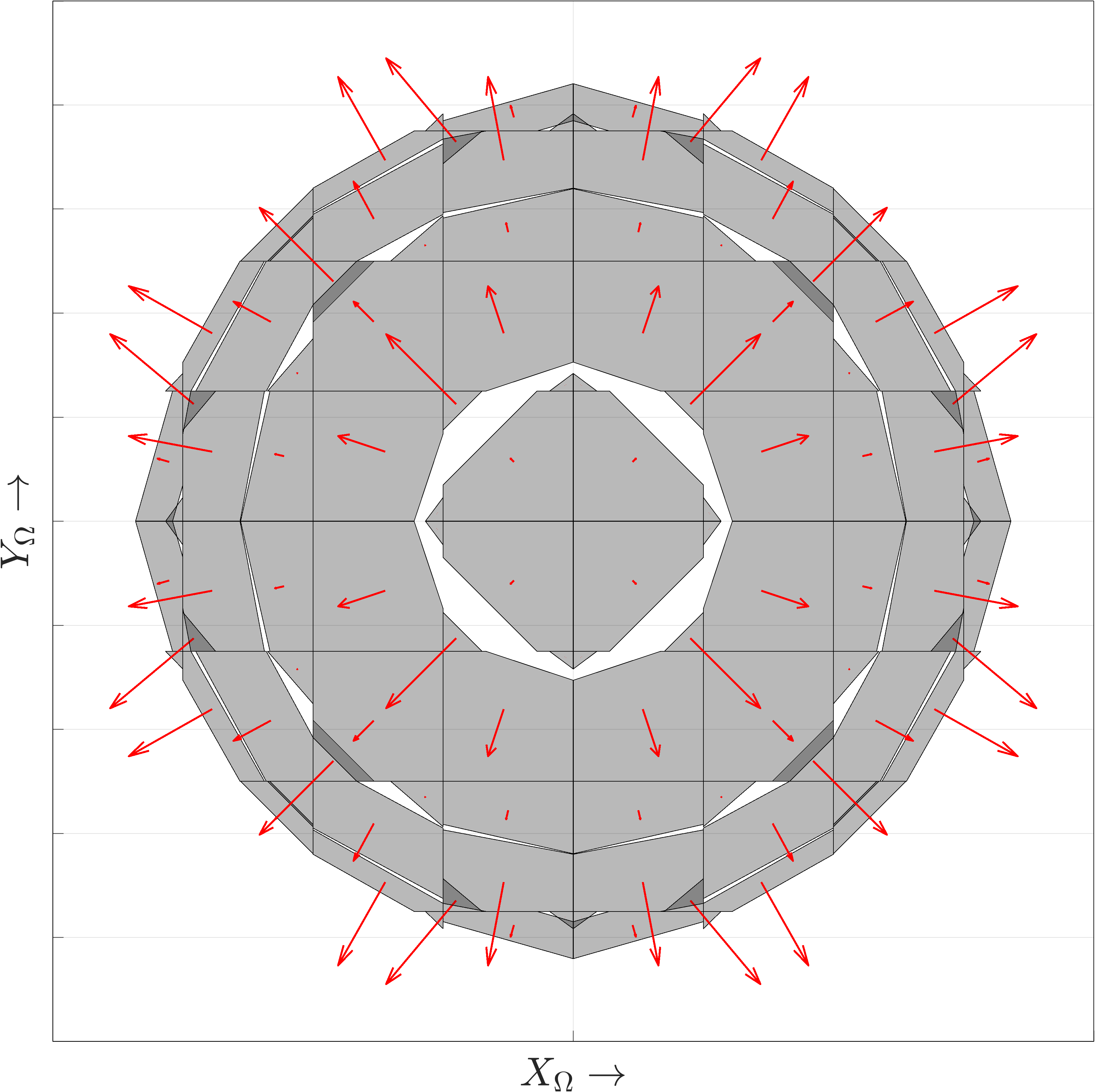}
         \caption{\protect Normals cf. \eqref{eq:normal:kabel} -\\ $8\times8\times8$}
         \label{fig:normal:new:2:b}
     \end{subfigure}
      \begin{subfigure}[b]{0.515\columnwidth}
         \centering \captionsetup{justification=centering}
         \includegraphics[width=\textwidth]{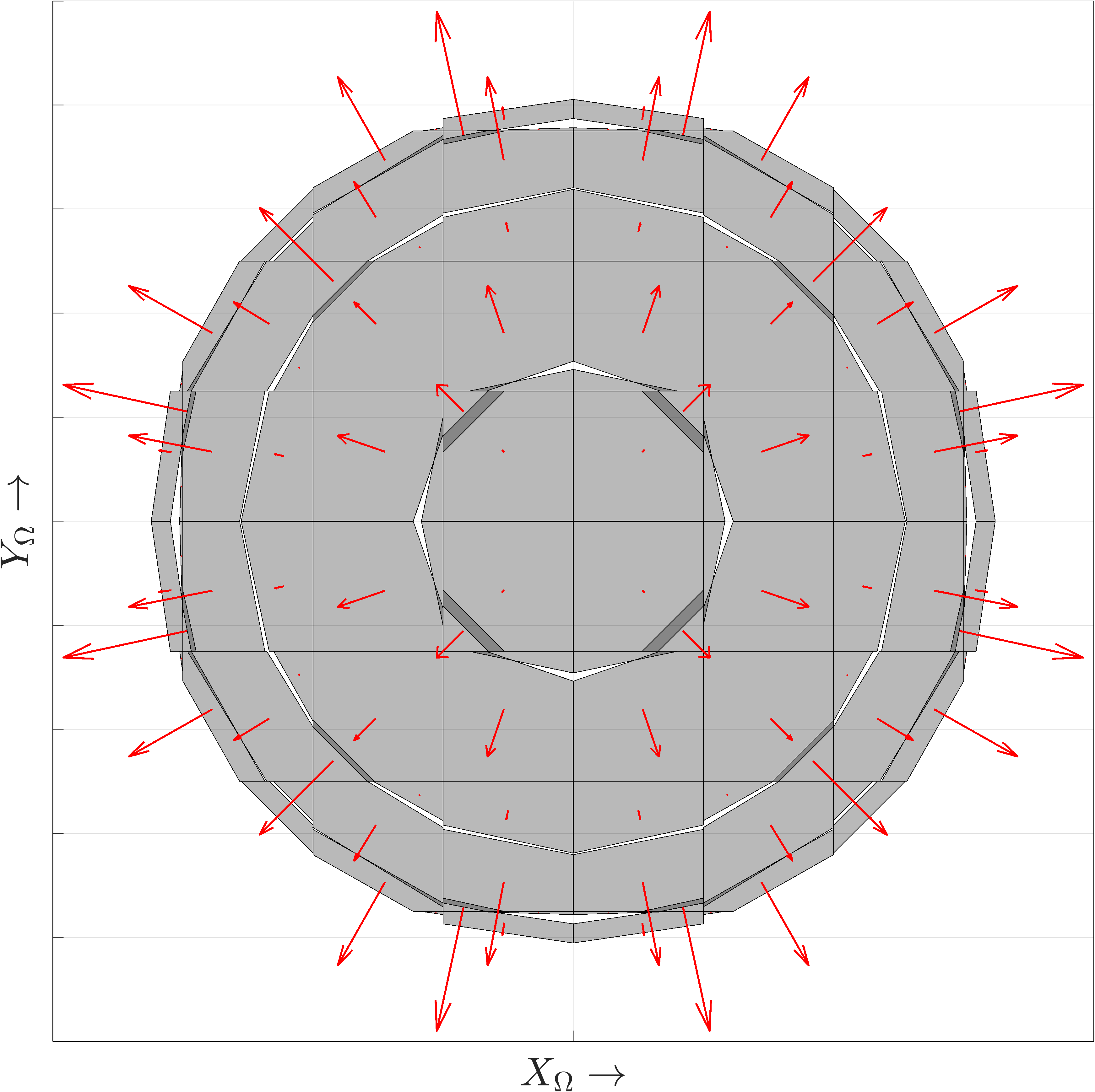}
         \caption{Normals cf. Sec~\ref{sec:normal} - \\ $8\times8\times8$ }
         \label{fig:normal:new:2:d}
     \end{subfigure}
     \begin{subfigure}[b]{0.515\columnwidth}
         \centering \captionsetup{justification=centering}
       \includegraphics[width=\textwidth]{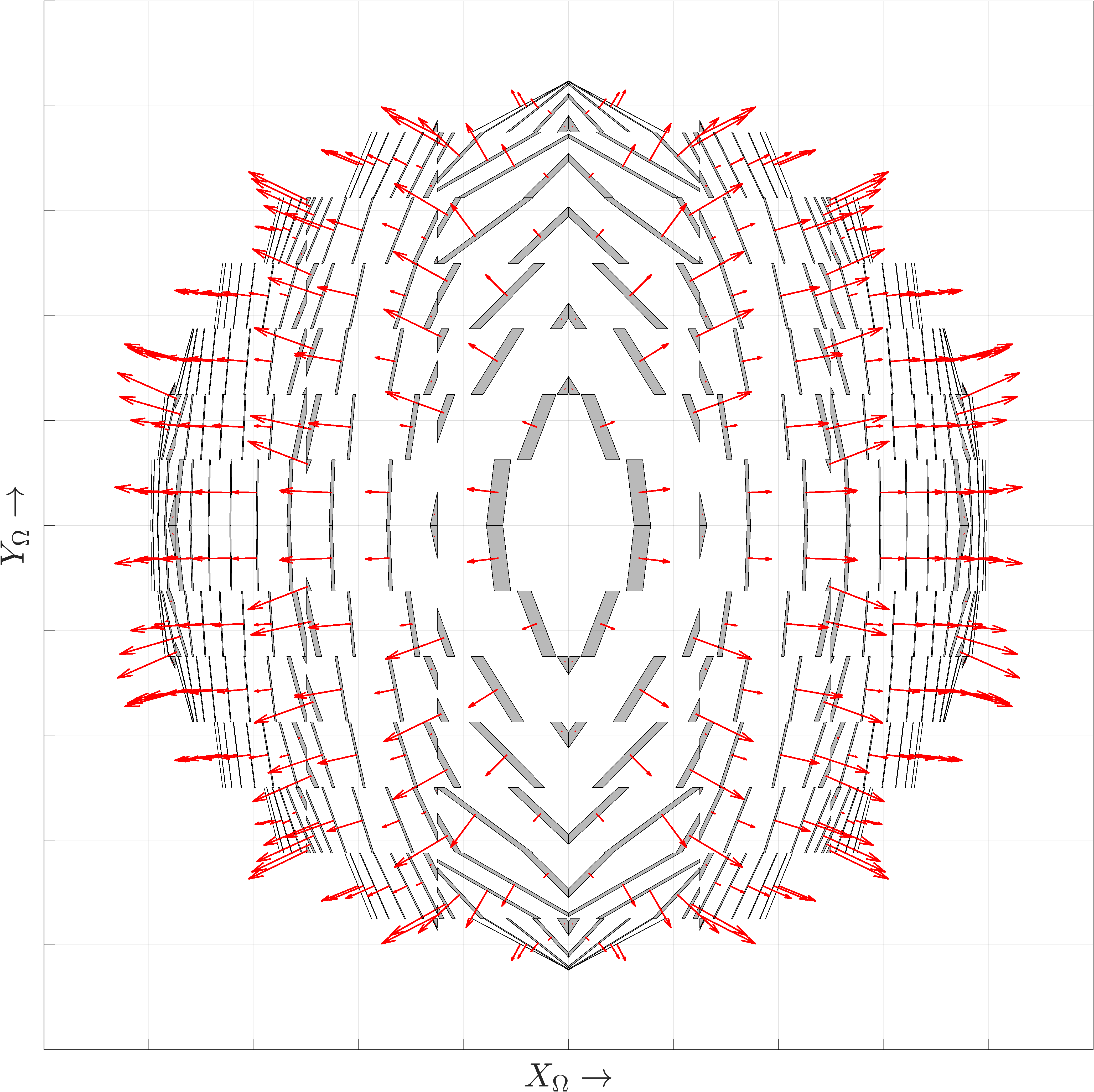}
         \caption{\protect Normals cf. \eqref{eq:normal:kabel} - \\ $8\times16\times32$}
         \label{fig:normal:new:3:a}
     \end{subfigure}
     \begin{subfigure}[b]{0.515\columnwidth}
         \centering \captionsetup{justification=centering}
         \includegraphics[width=\textwidth]{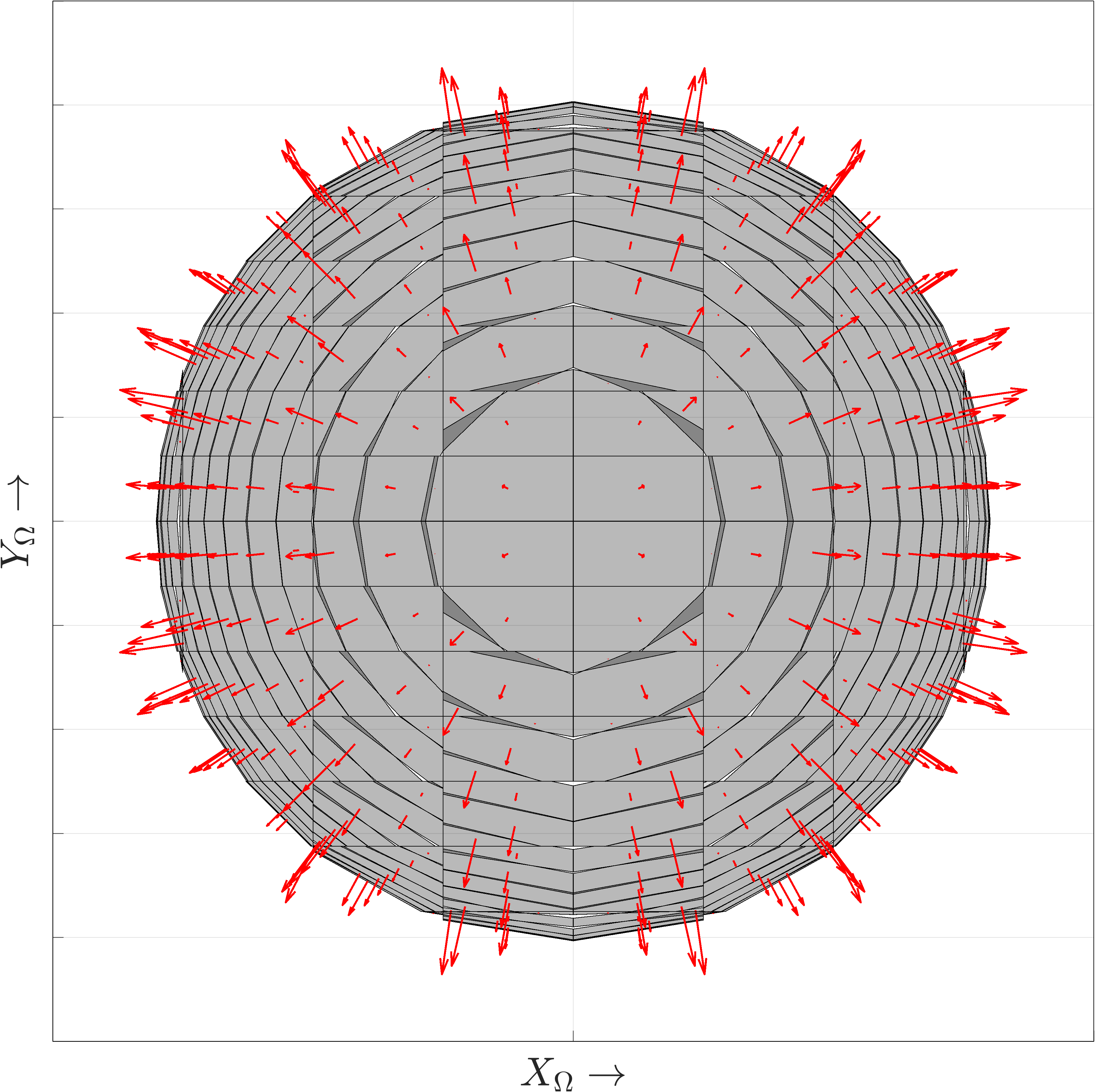}
         \caption{Normals cf. Sec~\ref{sec:normal} - \\ $8\times16\times32$}
         \label{fig:normal:new:3:b}
     \end{subfigure}
     
     \caption{\protect (Top view) Comparison of the established normal computation \cite{Kabel2015} (Figure (a), (c)) and the new approach (Figure (b), (d)) 
     with composite boxels of size $32\times32\times32$ and $32\times16\times8$ respectively. Input image of size 256\textsuperscript{3}.}
    \label{fig:normal:new:3}

\end{figure*}

Next, we have repeated the previous comparison by scaling down from 256\textsuperscript{3} to 8$\times$16$\times$32, i.e., introducing non-equiaxed composite boxels of size 32$\times$16$\times$8. The top views of the concurrent approaches for the normal computation are compared in Fig.~\ref{fig:normal:new:3}. The difference when using boxels is massive, with large gaps showing in Fig.~\ref{fig:normal:new:3:a} for the old method, while basically no gaps and minimal overlap is found in Fig.~\ref{fig:normal:new:3:b} for the new normal identification. The results from Fig.~\ref{fig:normal:new:1}-\ref{fig:normal:new:3} are consistent. They emphasize the relevance of using a dedicated scheme for normal identification. This is evermore so true in the presence of small volume fractions and/or non-equiaxed boxels.

\section{Numerical Examples}
\label{sec:examples}
\subsection{Material models and loading conditions}
In this section, we will focus on the key aspects and novelties of the proposed composite boxel approach for mechanical applications at finite strains. Without any loss of generality, we assume that the materials obey a compressible Neo-Hookean material model 
\begin{align}
    W = \dfrac{1}{2} \lambda (\ln{J})^2 - \mu \ln{J} 
 + \frac{1}{2} \mu (\mathrm{tr}(\fC) - 3).
\end{align}
The Young's modulus $E_\cvpm$ and the Poisson ratios $\nu_\cvpm$ for the inclusion and the matrix phase are denoted by the respective subscripts. We have deliberately chosen a rather high contrast by setting the synthetic parameters
\begin{align}
    E_\cvp &= 10 \text{ GPa}, \;\;\;\; \nu_\cvp = 0.3 \nonumber \\
    E_\cvm &= \;\;1 \text{ GPa}, \;\;\;\; \nu_\cvm = 0.0.
\end{align}
Note that Young's modulus has a distinct contrast of 10, exceeding that of literally all practical metal matrix composites, while the difference in the Poisson ratio is also pronounced\footnote{This is equivalent to a contrast of 25 in the bulk modulus $K$ and a contrast of 7.69 in the shear modulus $G$.}. Elevated contrast in the Poisson ratio is usually detrimental for the convergence of FFT-based schemes as it alters the colinearity of the stiffness of the contained phases, see \cite{Leuschner2018} for a convergence study.
The macroscopic deformation gradient $\overline{\fF}$ imposed on the RVE in the homogenization problem \HOM{} is chosen to be 50\% pure shear in component $\ol{F}_{xy}$
\begin{align} \label{Floading}
    \overline{\fF} = 
    \begin{pmatrix}
     1 & 1/2 & 0\\
     0 & 1 & 0 \\
     0 & 0 & 1 \\
    \end{pmatrix} \, .
\end{align}
The authors also emphasize the arbitrary selection of the material models (trying to trigger elevated material contrasts) and of the imposed kinematic loading. In view of reproducibility we have opted for a simple unambiguous model and a substantial loading considering multiscale applications.\\

\subsection{Spherical inclusion}
\label{subsec:ex:sphere}
Note that all of the following examples are truly 3D although some slice views could lead to the misinterpretation of a 2D problem.

First, we look at a benchmark problem with a spherical inclusion of radius $R = 0.4$ embedded in a 3D matrix.
We aim to identify the impact of the new normal identification (Section~\ref{sec:normal}) for both equiaxed and non-equiaxed composite boxels. The influence on the phase averages is also studied. We start from a fine-scale microstructure of $256^3$ voxels which is down-scaled to a resolution of $32^3$ voxels (downscale factor $512$). The coarsened discretization comprises 2624 ($\sim$~8\%) composite voxels. In each composite voxel, the normal orientation~$\fN$ and the local phase volume fractions~$c_\cvpm$ are computed cf. Section~\ref{sec:normal} and using the established method of \cite{Kabel2015}. The homogenization problem is then solved via Fourier-Accelerated Nodal Solvers (FANS, \cite{Leuschner2018}) using the 8-noded hexahedral finite elements. The convergence criterion is a relative tolerance of 10\textsuperscript{-10} with respect to the $l^\infty$-norm of the nodal force residual vector.

In Figure~\ref{fig:ex:sphere:1}, the converged solution is shown at the center slice $Z = 0$.
We employ a new post-processing procedure that gathers individual field data for each phase and visualizes it using the planar interface with orientation $\fN$.
 
It is obvious that this leads to a much smoother solution close to the interface, even within the individual phases. To our knowledge, a comparable visualization has not been used in previous studies. Further, we have visualized the tractions in the deformed configuration (Fig.~\ref{fig:ex:sphere:1}, (c)) which are unavailable in both non-conforming disretizations as well as in usual conforming FE discretizations.


\begin{figure*}[!h]
	\centering
         
         
         
     \hspace{-0.5cm}
     \begin{subfigure}[b]{0.39\textwidth}
         \centering
         \includegraphics[width=\textwidth]{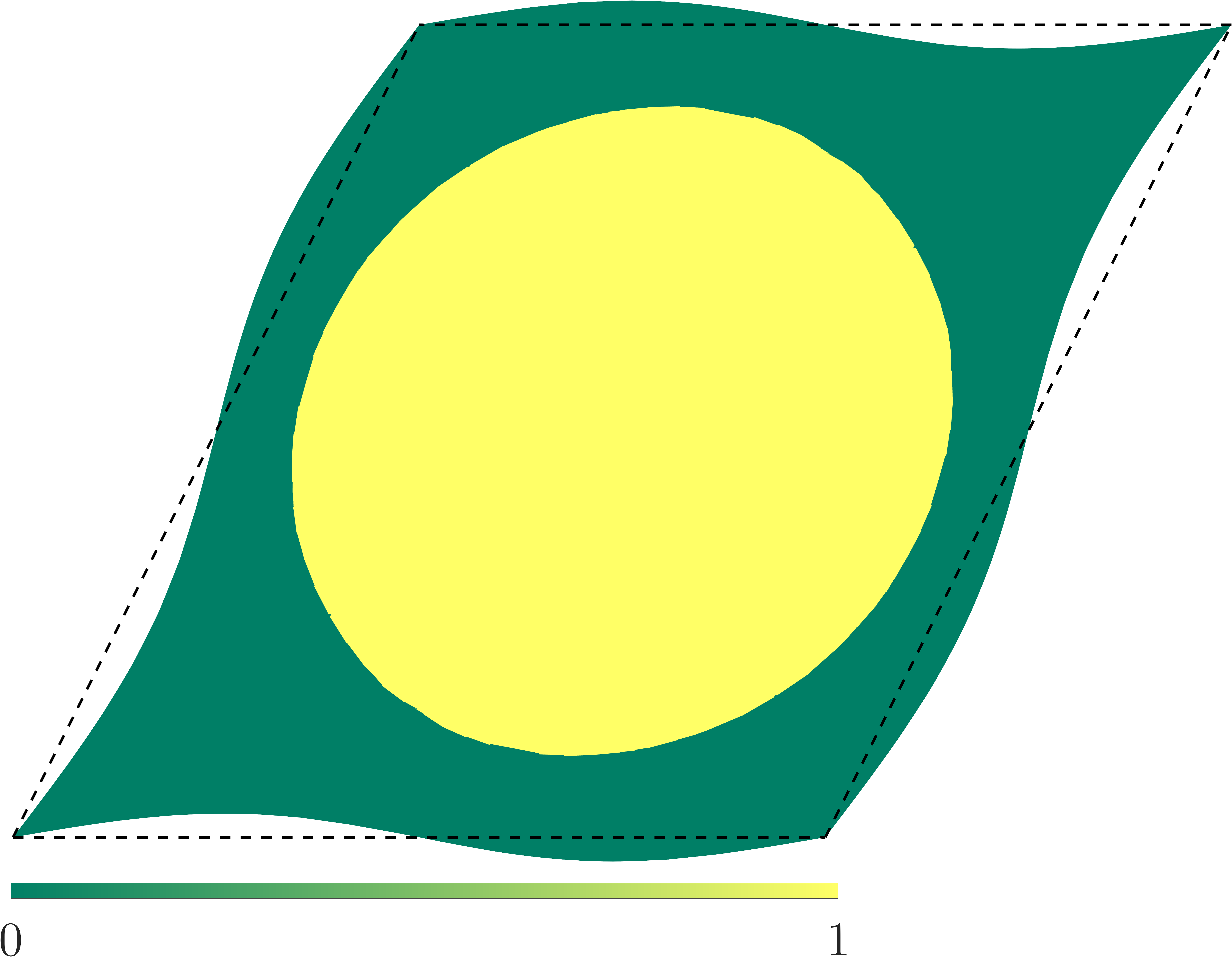}
         \caption{Deformed RVE $\Omega_t$}
         
     \end{subfigure}
     \hspace{-1.4cm} 
         \begin{subfigure}[b]{0.39\textwidth}
         \centering
         \includegraphics[width=\textwidth]{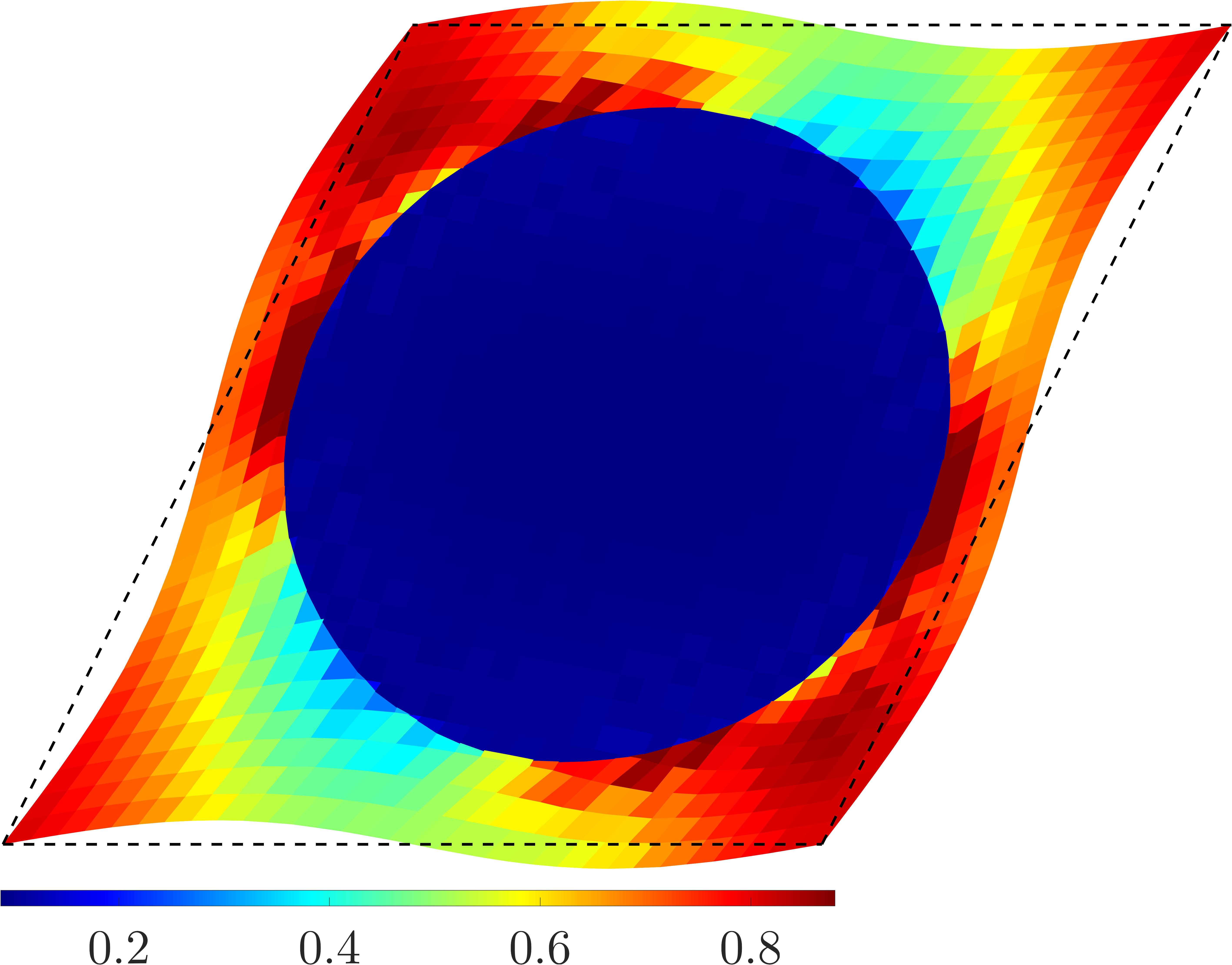}
         \caption{Almansi strain - $e_{xy}$}
         
    \end{subfigure}
    \hspace{-1.4cm} 
    \begin{subfigure}[b]{0.39\textwidth}
         \centering
         \includegraphics[width=\textwidth]{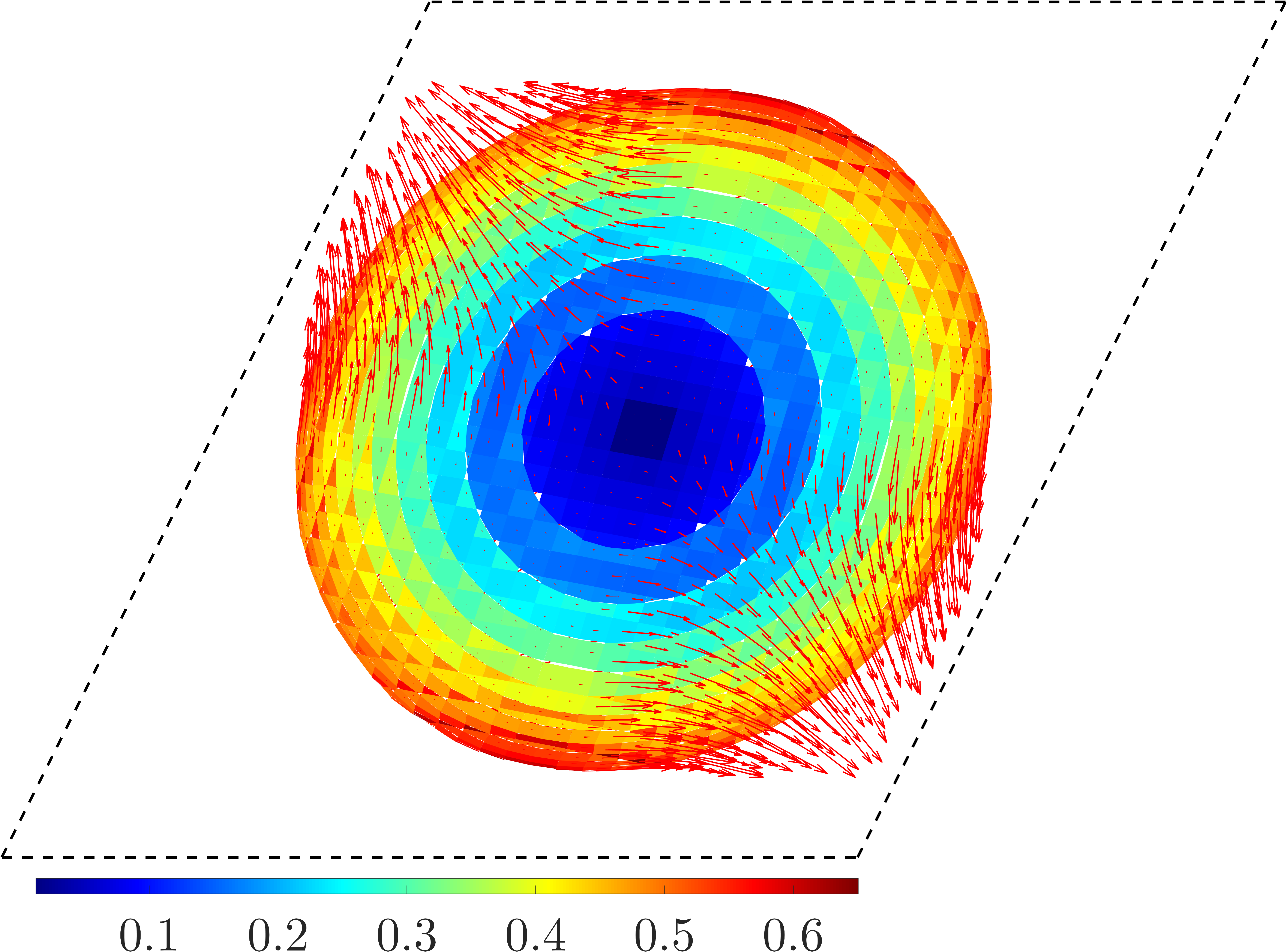}
         \caption{tractions $\ft(\fx)$ and $\| \ft\|_2$ on $\mathscr{S}_t$}
    \end{subfigure}
    
    \caption{\protect Composite Boxel (ComBo) solution using CG-FANS \cite{Leuschner2018} with full integration for the spherical inclusion example of Section~\ref{subsec:ex:sphere}: spatial configuration at slice $Z = 0$}
    \label{fig:ex:sphere:1}
\end{figure*}

Regarding the accuracy of the effective stress tensor~$\ol{\fP}$ and the phase averages $\ol{\fP}_\cvpm$, a study was performed which compares the 256\textsuperscript{3} reference solution using the procedure suggested in \cite{Willot2015} for five different ComBo discretizations using reduced integration CG-FANS \cite{Leuschner2018} (FANS HEX8R) and using the normals from \eqref{eq:normal:kabel} matching previous studies as well as the improved normals from Section~\ref{sec:normal}. The results are summarized in Table~\ref{tab:ex:sphere:P1}. Note that the average stress only reflects part of the actual accuracy of the solver as it neglects local field fluctuations, which are examined in the following examples, see Section~\ref{subsec:ex:polyhedron}. It can be noted that the errors in $\ol{\fP}$ of all composite discretizations are below 1.15\% for the normals cf. \cite{Kabel2015} and 0.78\% for the new normals suggested in Section~\ref{sec:normal}, respectively, i.e., the new normals reduce the error in all averaged stresses by approximately 30\% for equiaxed composite voxels and up to 1\,000\% for the non-equiaxed composite boxels. Remarkably, these improvements come at no additional computational expense, but they owe only to the more accurate orientation information.

\begin{table*}[!h]
\begin{center}
\begin{minipage}{\textwidth}
\caption{\protect Comparing averaged 1\textsuperscript{st} Piola-Kirchoff stresses in the spherical inclusion problem cf. Section~\ref{subsec:ex:sphere}: different resolutions, normals obtained via cf. \cite{Kabel2015} and cf. Sec.~\ref{sec:normal}, with errors against a reference solution $(256^3)$ based on \cite{Willot2015}; ComBo solutions are obtained using FANS with reduced integration \cite{Leuschner2018}; gray background highlights better result (old normals vs. new normals); 
Error defined as the relative Frobenius norm w.r.t reference solution
\label{tab:ex:sphere:P1}
}
\scriptsize
\begin{tabular*}{\textwidth}{@{\extracolsep{\fill}}crrrrrrr@{\extracolsep{\fill}}}
\toprule%
 &&\multicolumn{3}{c}{Normals cf. \cite{Kabel2015}} & \multicolumn{3}{c}{Normals cf. Sec.~\ref{sec:normal}}\\
\cmidrule(lr){3-5}
\cmidrule(lr){6-8} 
 &&\multicolumn{3}{c}{Error (\%)} & \multicolumn{3}{c}{Error (\%)} \\
\cmidrule(lr){3-5}
\cmidrule(lr){6-8} 
Resolution & Downscale & {$\overline{\fP}$} & {$\overline{\fP}_\cvm$} & {$\overline{\fP}_\cvp$} & {$\overline{\fP}$} & {$\overline{\fP}_\cvm$} & {$\overline{\fP}_\cvp$} \\
\midrule
$ 8 \times 8 \times 8$ & 32768 & 1.145 & 0.263 & 3.537 & \cellcolor{uniSgray20} 0.771 & \cellcolor{uniSgray20} 0.177 & \cellcolor{uniSgray20} 2.361 \\ [2pt]
$ 16 \times 16 \times 16$ & 4096 & 0.160 & 0.042 & 0.501 & \cellcolor{uniSgray20} 0.053 & \cellcolor{uniSgray20} 0.028 & \cellcolor{uniSgray20} 0.172\\[2pt]
$ 8 \times 16 \times 32$ & 4096 & 1.586 & 0.282 & 4.609 & \cellcolor{uniSgray20} 0.177 & \cellcolor{uniSgray20} 0.064 & \cellcolor{uniSgray20} 0.584 \\[2pt]
$ 32 \times 32 \times 32$ & 512 & 0.072 & 0.019 & 0.219 & \cellcolor{uniSgray20} 0.070 & \cellcolor{uniSgray20} 0.018 & \cellcolor{uniSgray20} 0.212\\[2pt]
$ 16 \times 32 \times 64$ & 512 & 0.801 & 0.156 & 2.335 & \cellcolor{uniSgray20} 0.100 & \cellcolor{uniSgray20} 0.019 & \cellcolor{uniSgray20} 0.262 \\
\botrule
\end{tabular*}
\end{minipage}
\end{center}
\end{table*}

\subsection{Composite boxels and local solution field quality}
\label{subsec:ex:polyhedron}
The straightforward implementation of composite boxels into various FFT-based schemes makes them truly versatile. Here, we present some of the most popular methods used in tandem with composite boxels.
Most importantly, the composite boxel method can be used to replace any call to a constitutive model, independent of the discretization method.

In figure~\ref{fig:ex:polyhedron},  we consider a random polyhedral inclusion surrounded by matrix material. The fine-scale microstructure is generated at a resolution of 256\textsuperscript{3}. The ComBo discretization is 32\textsuperscript{3} with normals gained cf. Section~\ref{sec:normal}, yielding a 512 times smaller problem. We further go on to compare the full field solutions for the $E_{XY}$ component of the Green-Lagrange strain at the center slice ($Z=0$) for various popular FFT-based solution strategies. A reference solution is computed on the original fine-scale problem (without any composite boxels).
\begin{figure*}[h!]
	\centering
     \begin{subfigure}[b]{0.24\textwidth}
         \centering
         \includegraphics[width=\textwidth]{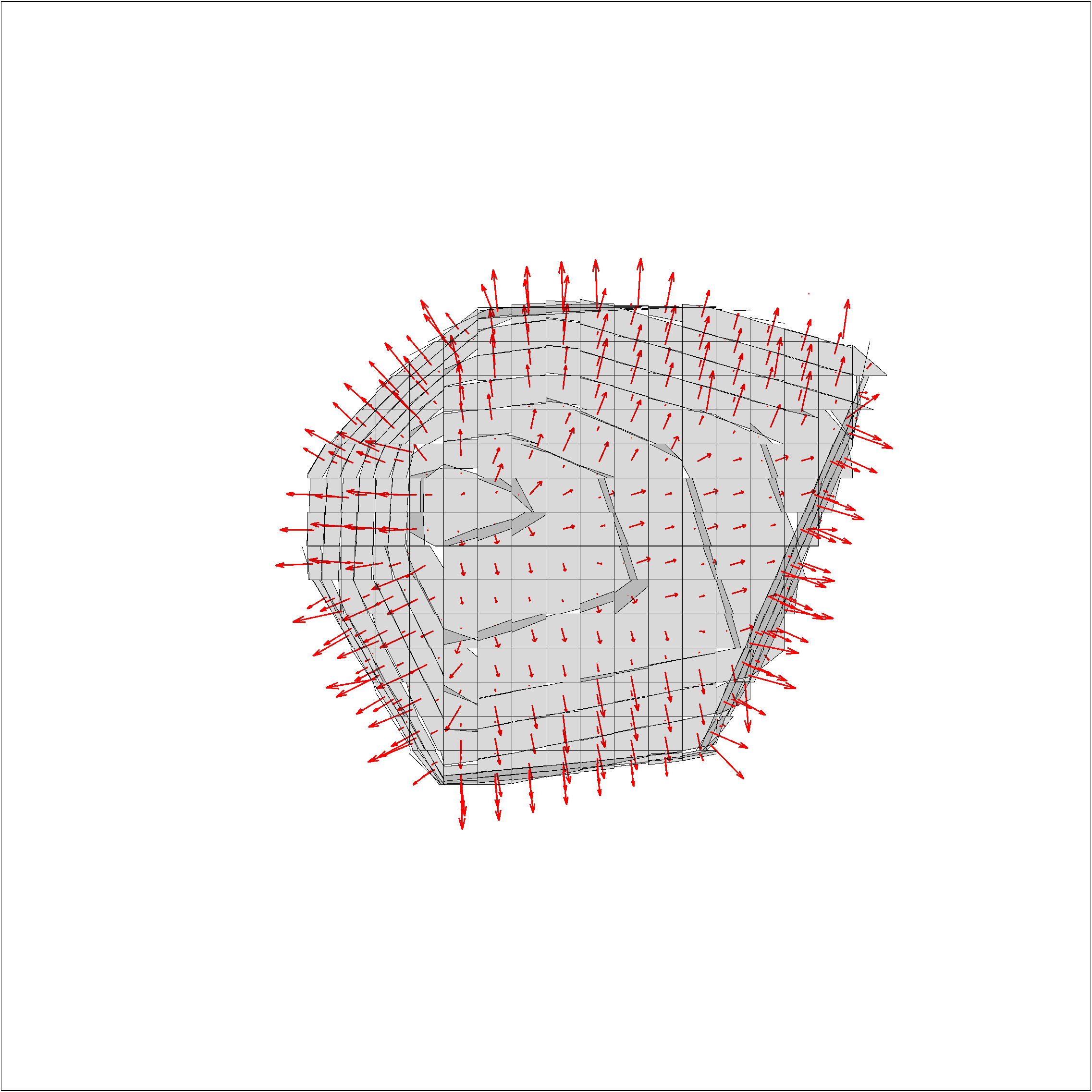}
     \end{subfigure}
	\begin{subfigure}[b]{0.24\textwidth}
         \centering
         \includegraphics[width=\textwidth]{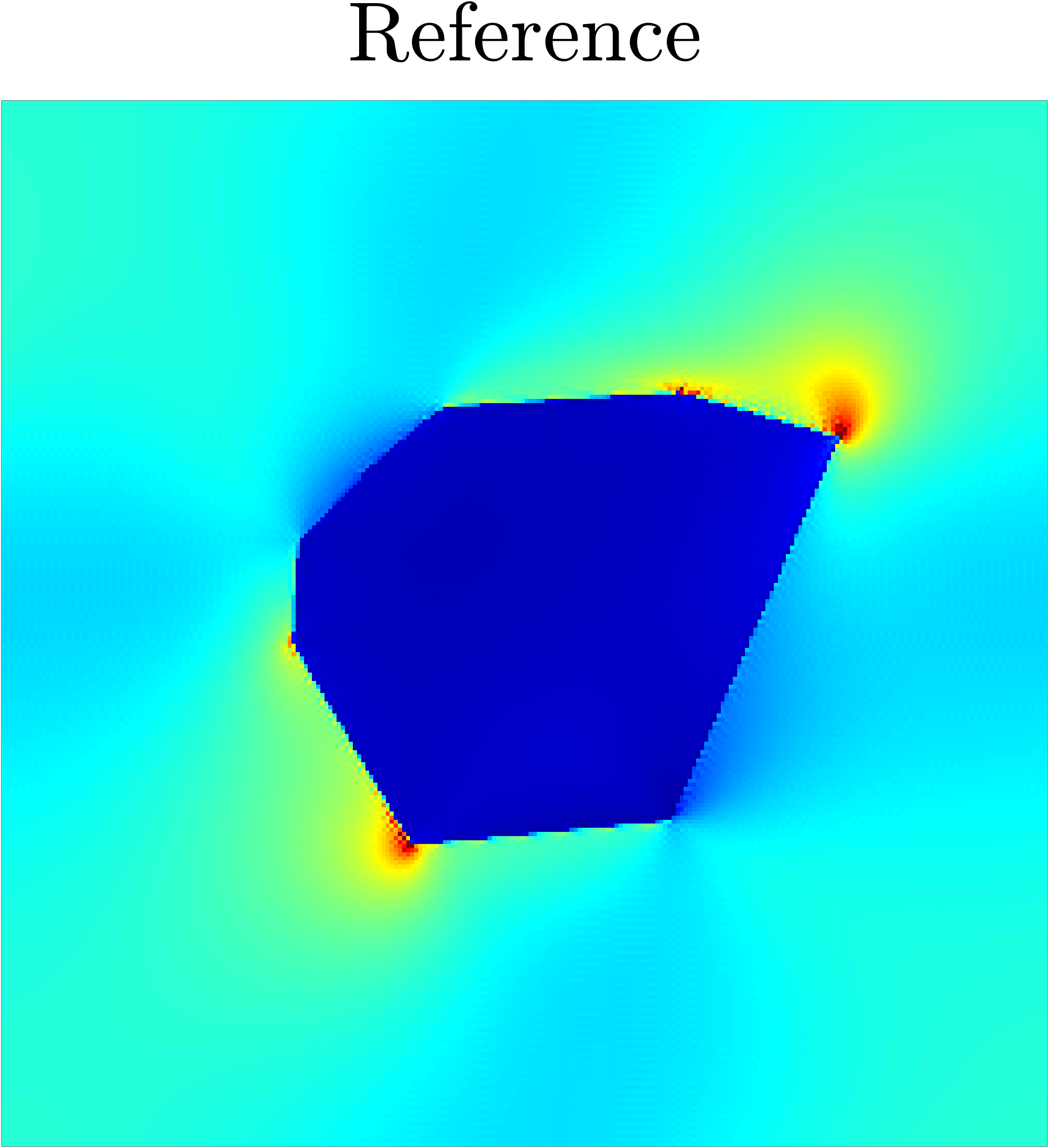}
     \end{subfigure}
	\begin{subfigure}[b]{0.24\textwidth}
         \centering
         \includegraphics[width=\textwidth]{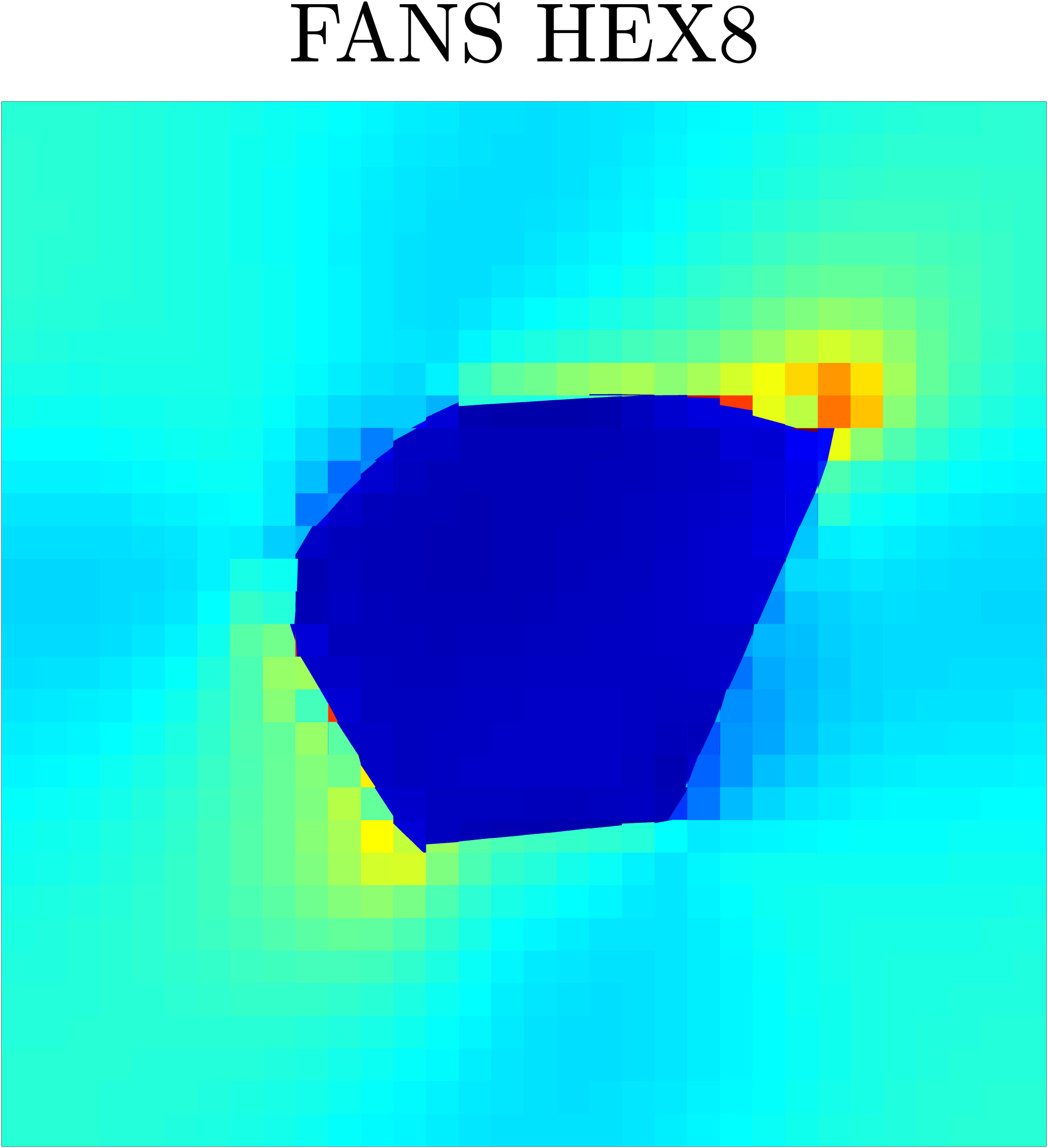}
     \end{subfigure}
     \begin{subfigure}[b]{0.24\textwidth}
         \centering
         \includegraphics[width=\textwidth]{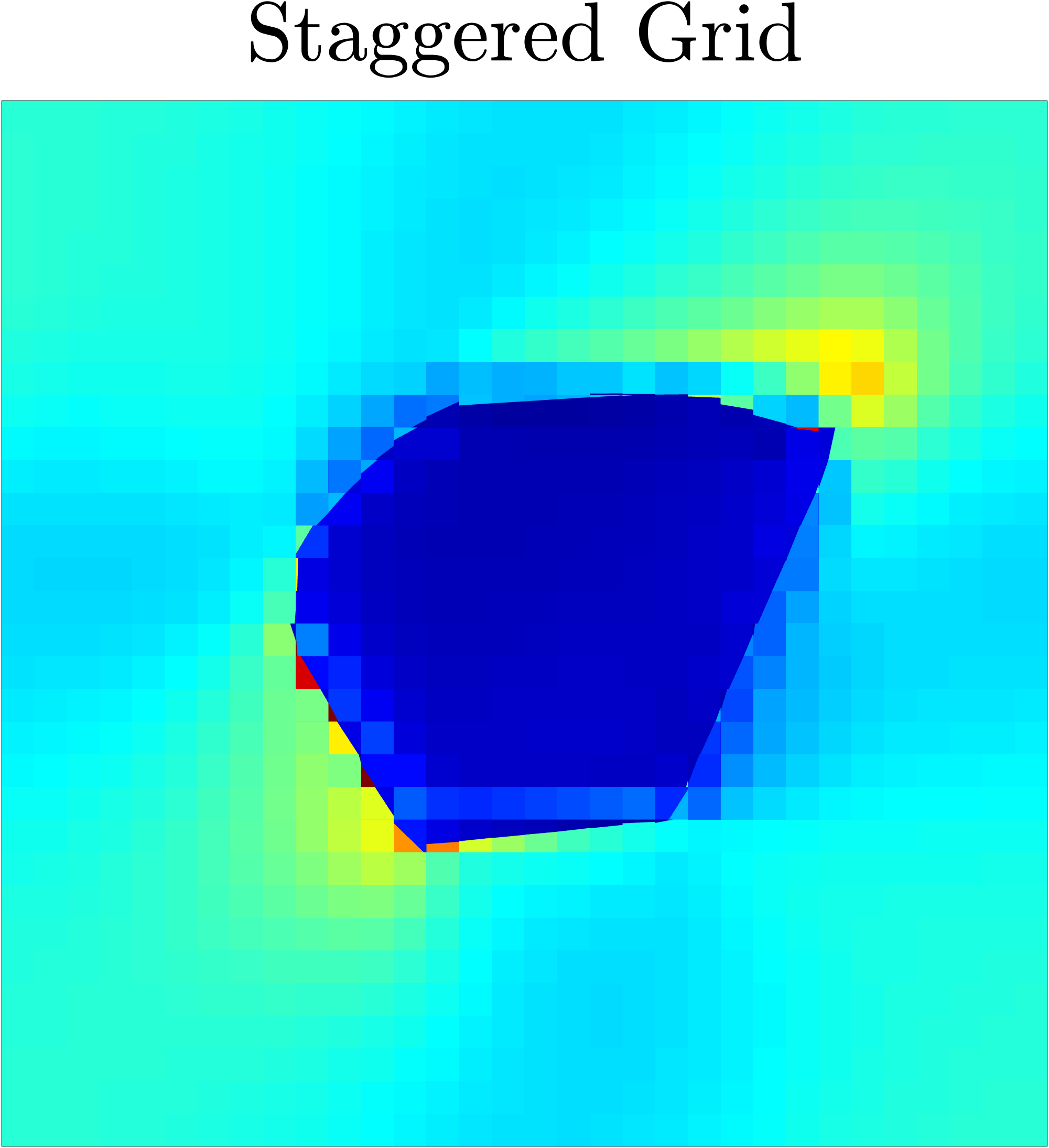}
     \end{subfigure} 
     \begin{subfigure}[b]{0.24\textwidth}
         \centering
         \includegraphics[width=\textwidth]{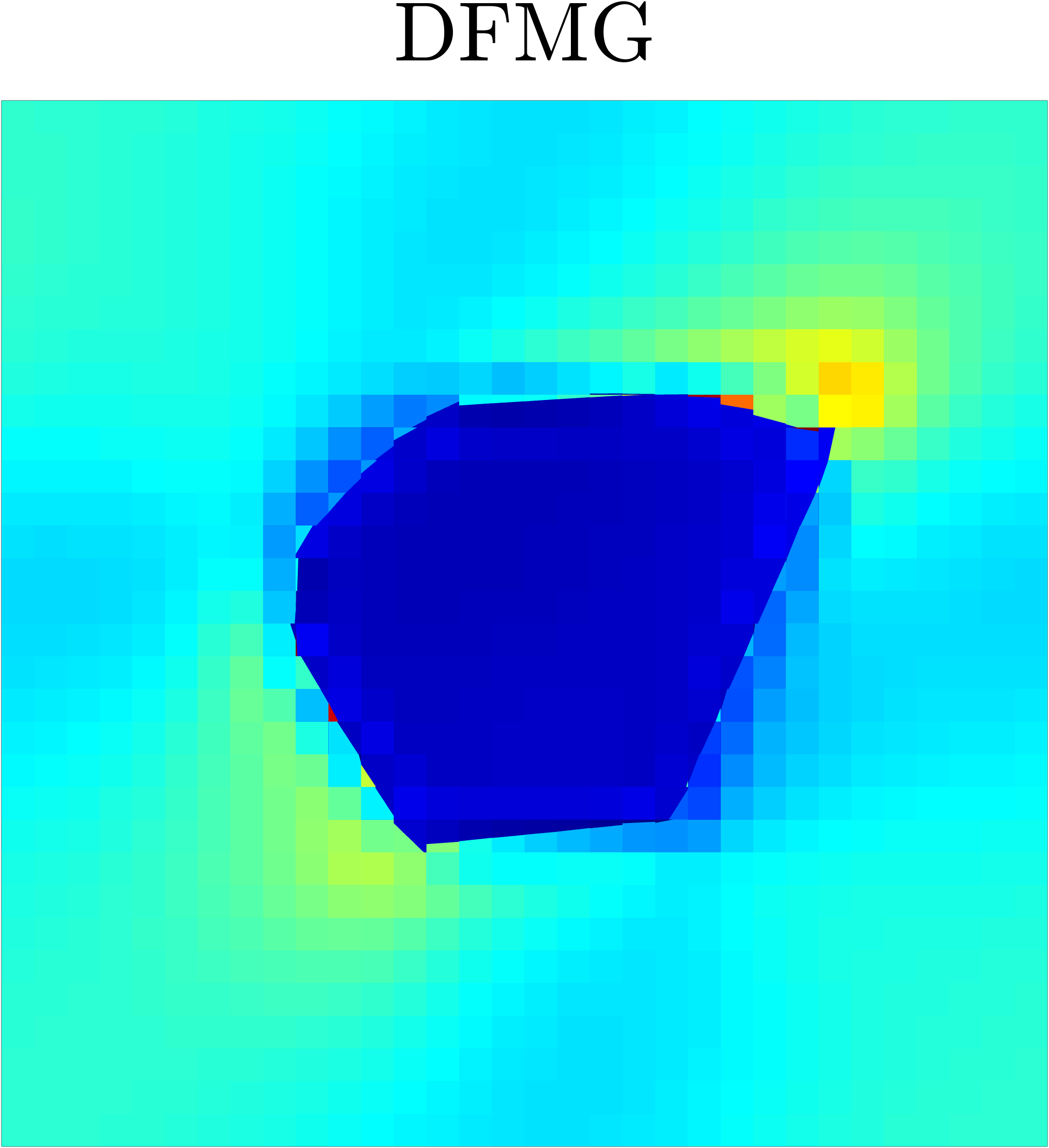}
     \end{subfigure}
     \begin{subfigure}[b]{0.24\textwidth}
         \centering
         \includegraphics[width=\textwidth]{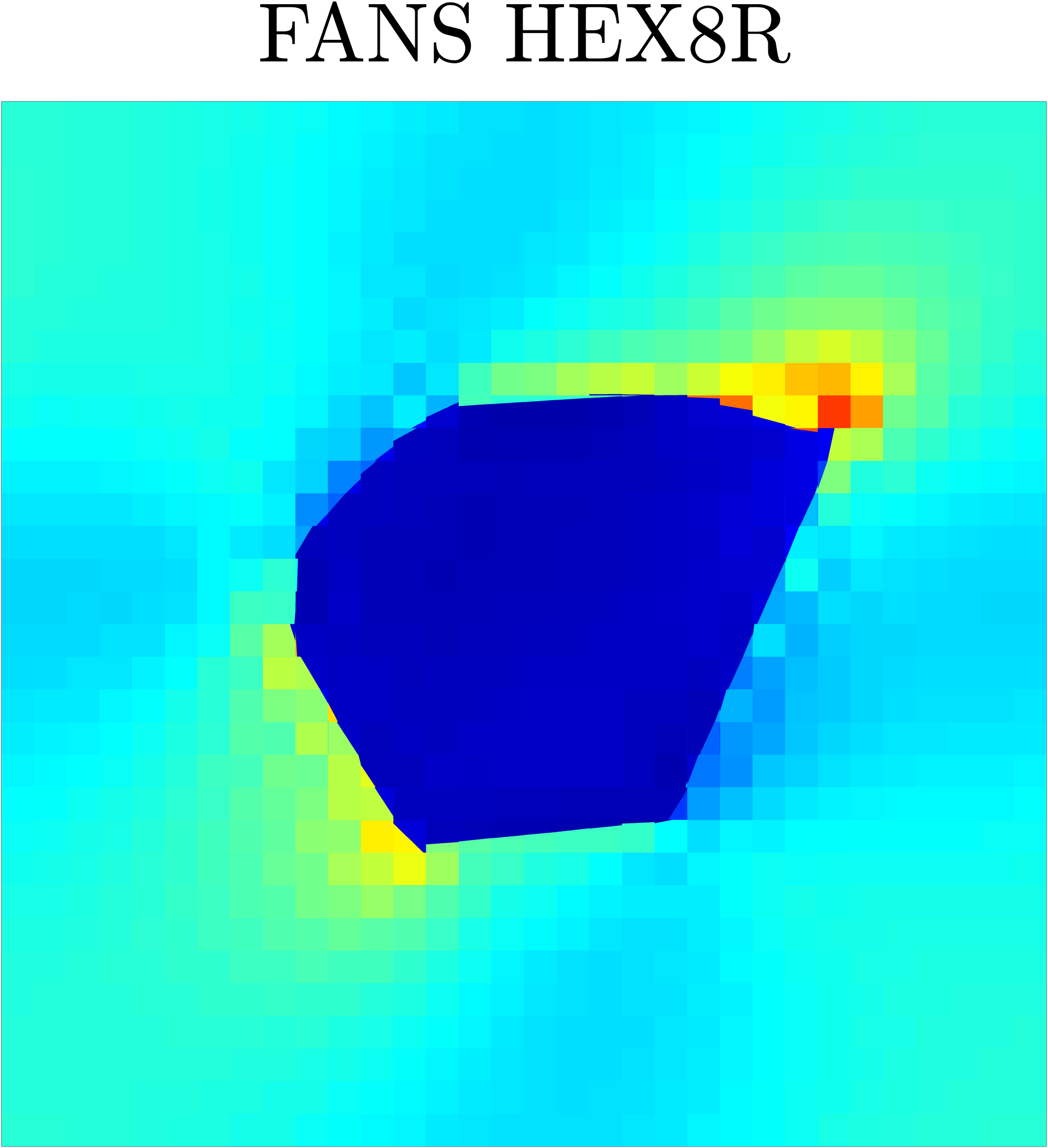}
     \end{subfigure}
         \begin{subfigure}[b]{0.24\textwidth}
         \centering
         \includegraphics[width=\textwidth]{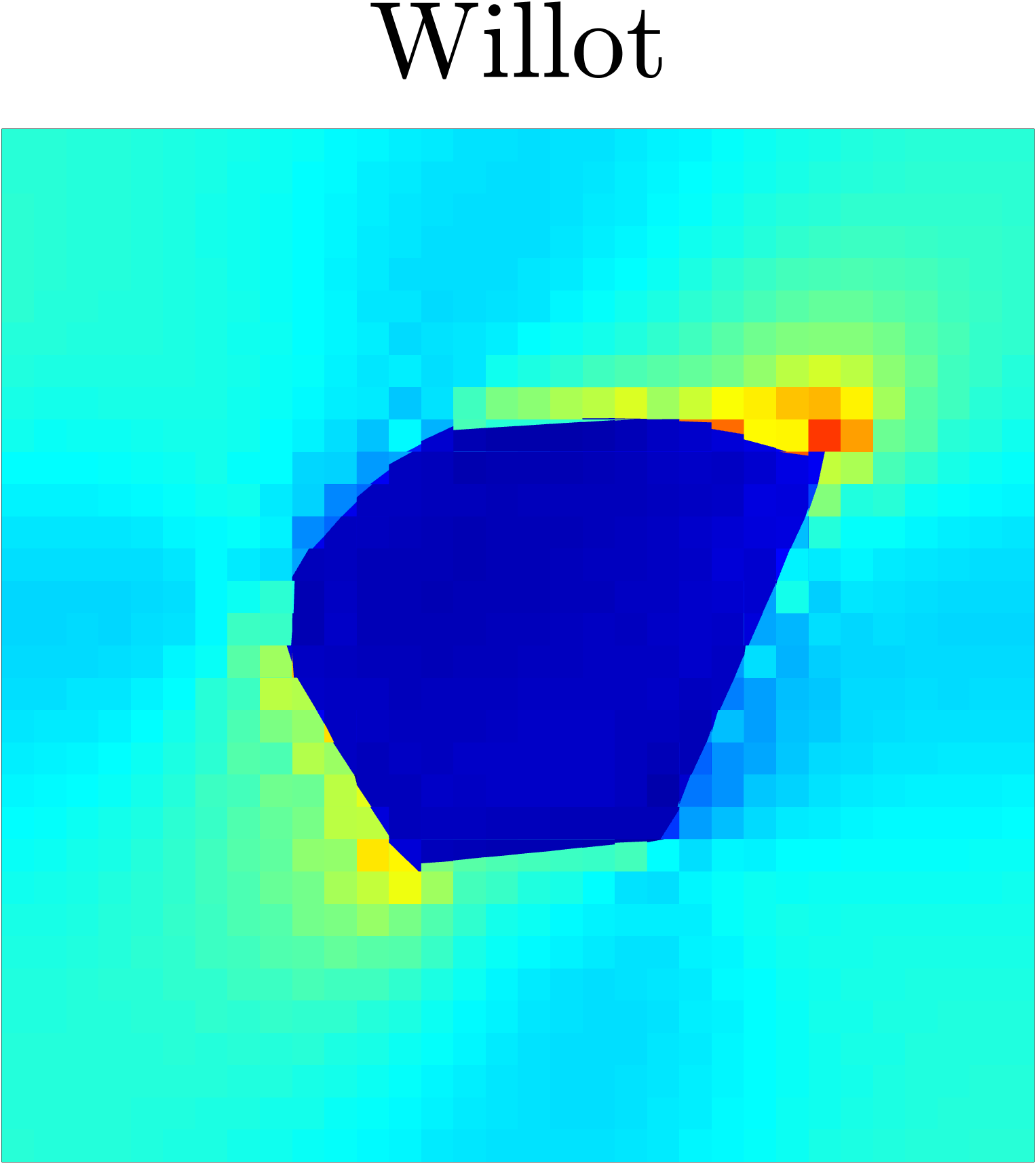}
    \end{subfigure}
    \begin{subfigure}[b]{0.24\textwidth}
         \centering
         \includegraphics[width=\textwidth]{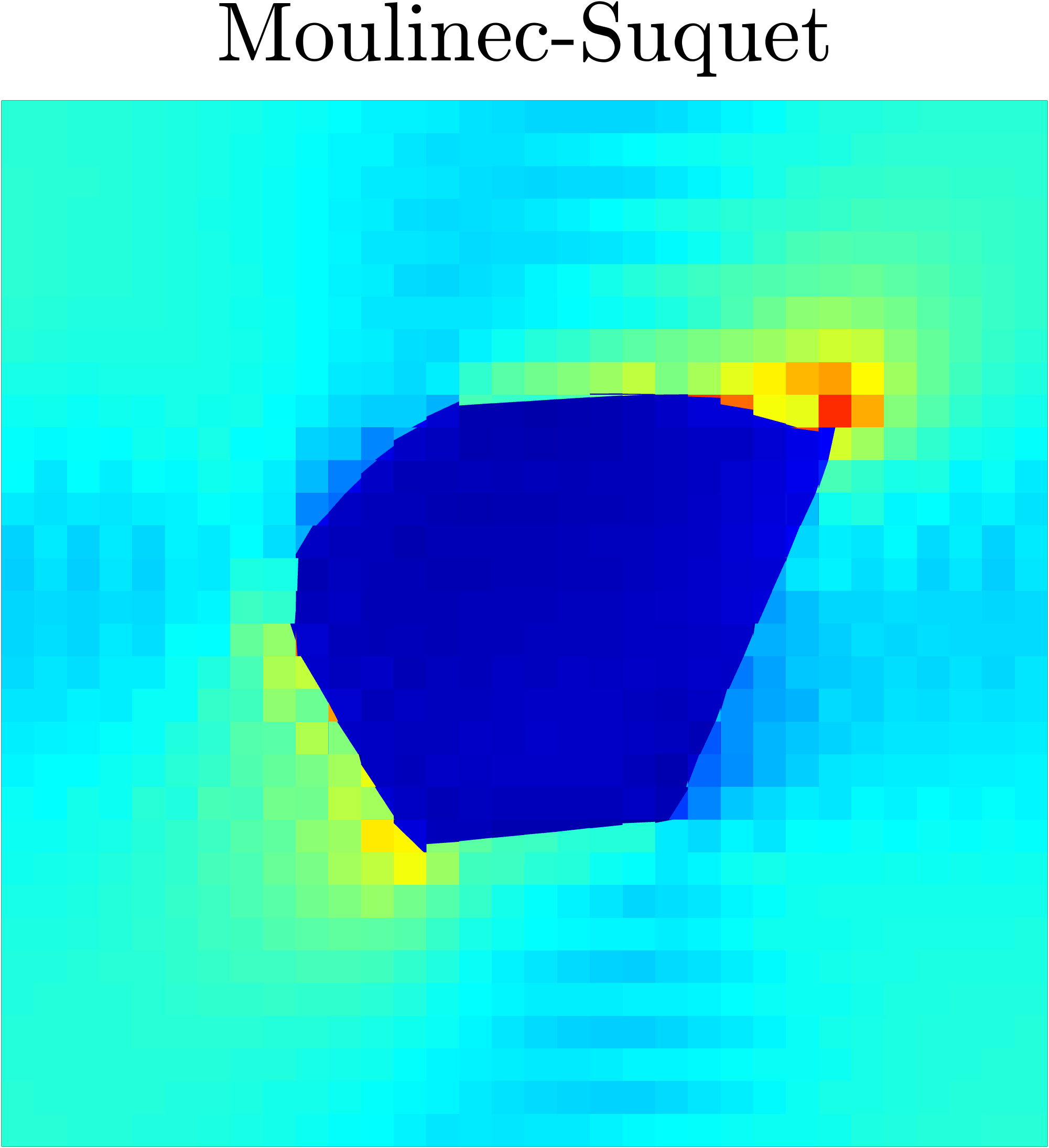}
    \end{subfigure}
    \\
     \begin{subfigure}[b]{0.5\textwidth}
         \centering
         \includegraphics[width=\textwidth]{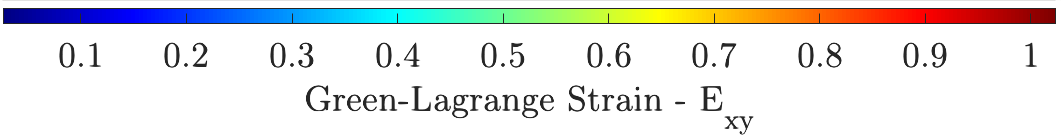}
    \end{subfigure}
    \caption{Solutions for the Green-Lagrange strain $E_{XY}$ obtained by different FFT-based schemes for the polyhedral inclusion problem (Resolution: $32^3$) and comparision against reference solution (Resolution: $512^3$) at slice $Z=0$}
    \label{fig:ex:polyhedron}
\end{figure*}

The solution using FANS with full integration (8 Gauss points per element) finite elements (HEX8) is shown next to the reference. It exhibits no staircasing and no artefacts even close to the interface. The solution quantitatively captures the behavior well in the interior of the phases and near the phase boundaries compared to the reference solution even for this coarse discretization. Stereotypical of the fully integrated HEX8 elements, the response is a tad stiffer which is reflected in the phase-wise averages, Table~\ref{tab:ex:poly:P1}.
The solution obtained by the staggered grid approach, despite its advantages in an algorithmic sense, lacks symmetry in the physical location of the strain components in each voxel. Hence, only an interpolated measure can be obtained at the boxel center for visualization. This ambiguity, unfortunately, leads to poor results close to the material interface despite the use of the ComBo discretization which is also reflected in the homogenized quantities. 
This issue can partially be overcome by the proposed double-fine material grid (DFMG, see Appendix~\ref{subsec:finite_difference_staggered}) approach at the cost of extra material law evaluations compared to the original staggered grid approach. The DFMG approach also interpolates the field quantities to the boxel center, which causes blurring/smoothing everywhere, including the material boundaries: local solution field accuracy is sacrificed, although DFMG outperforms the staggered grid approach, and it preserves existing symmetries.


FANS HEX8R, i.e., with reduced integration (1 Gauss point per element) is numerically similar to the HEX8R discretization by Willot \cite{Willot2015},  as shown by \cite{Schneider2016}. The FANS HEX8R solution suffers from hour-glassing but the amplitude of the hourglass modes is greatly diminished in the presence of composite boxels compared to when no composite boxels are in use. The convergence behavior of FANS HEX8R is very much on par with FANS with fully integrated HEX8 elements.
Finally, we also employ the original Moulinec-Suquet scheme which has been extensively studied in the literature 
and suffers from spurious oscillations, although predicting the homogenized quantities very well.

\begin{table}[h!]
\begin{minipage}{\columnwidth}
\caption{\protect Comparing averaged 1\textsuperscript{st} Piola-Kirchoff stresses in the Polyhedron problem ($32^3$) cf. Section~\ref{subsec:ex:polyhedron}: different FFT based methods: with errors against a reference solution $(256^3)$ based on \cite{Willot2015}; Error defined as the relative Frobenius norm w.r.t reference solution
}\label{tab:ex:poly:P1}
\scriptsize
\centering
\begin{tabular}{@{\extracolsep{\fill}}lrrr@{\extracolsep{\fill}}}
\cmidrule(lr){1-4}
 &\multicolumn{3}{c}{Error (\%)} \\
\cmidrule(lr){2-4}
Method &  {$\overline{\fP}$} & {$\overline{\fP}_\cvm$} & {$\overline{\fP}_\cvp$} \\
\midrule
Staggered Grid & 1.219 & 0.202 & 16.993\\[2pt]
DFMG &  0.661 & 0.103 & 9.147\\[2pt]
FANS HEX8 & 0.142 & 0.024 & 1.947 \\[2pt]
Willot & 0.065 & 0.011 & 0.903 \\[2pt]
FANS HEX8R & 0.049 & 0.008 & 0.675\\[2pt]
Moulinec-Suquet & 0.026 & 0.005 & 0.349 \\[2pt]
\botrule
\end{tabular}
\end{minipage}
\end{table}

\subsection{Selective back-projection in practice}
\label{subsec:ex:polyhedron:bp}
The selective back-projection introduced in Section~\ref{subsec:backproj} is investigated for the polyhedral microstructure of the previous Section~\ref{subsec:ex:polyhedron}. First, we demonstrate the influence it has on the convergence of the NR scheme inside of a single critical voxel, see Figure~\ref{fig:matdetlemma_ex}.

\begin{figure}[h!]
	\centering
	\includegraphics[width=\columnwidth]{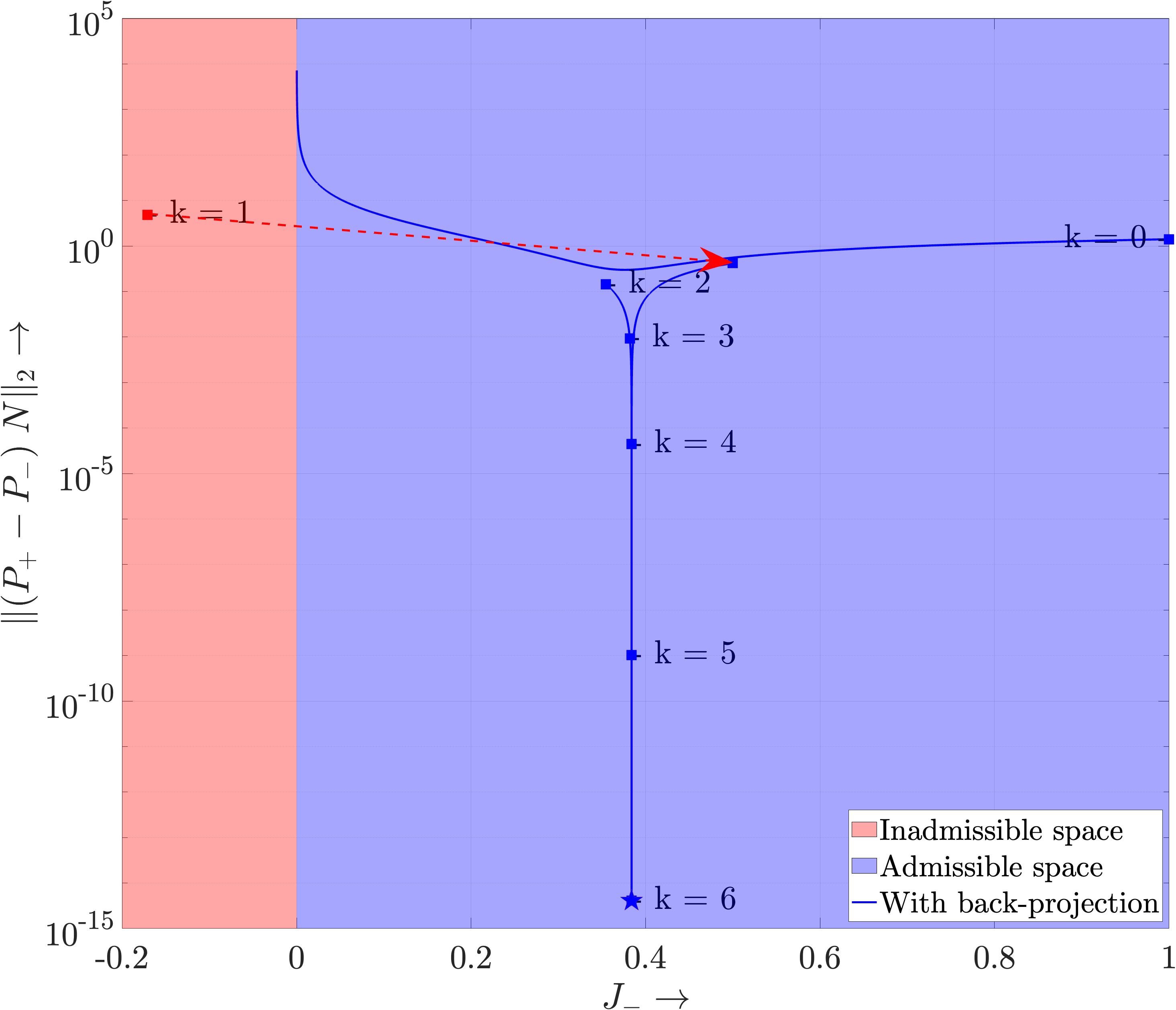}
    \caption{Convergence behavior of the Newton-Raphson algorithm with back-projection}
    \label{fig:matdetlemma_ex}
\end{figure}

The traction balance residual vs. material Jacobian of phase $\cvm$ is plotted in a particular case when back-projection is employed. In the case discussed, the volume fraction of the inclusion phase $\cvp$ is 99.625\%. The Newton-Raphson algorithm~\ref{alg:combo_NR} starts with an admissible initial guess (at $k=0$), but the naive NR-update would push the iterate $\fa^{[1]}$ to the inadmissible domain which is highlighted in red in Fig.~\ref{fig:matdetlemma_ex}. The solid blue line tracks the continuous intermediate residual between the current iterate $\fa^{[k]}$ and the subsequent iterate $\fa^{[k+1]}$ along a line search parameter. Note that if either one of $J_\cvpm$ tends to 0, the residual tends to $\infty$, which is characteristic of $\log$ based hyperelastic strain energies \cite{Doll2000}. It is also noteworthy that, although the solution lies in between $\fa^{[0]}$ and $\fa^{[1]}$ in this plot, the continuous residual does not go to zero anywhere. This is due to the projection onto the $J_\cvm$ axis (corresponding to the $\beta$ axis modulu scaling) which cannot account for incorrect components $\fM^\perp_\beta \fa^{[k]}$ -- i.e., a mere line search \textit{cannot} suffice, in general. Using the selective back-projection cf. algorithm~\ref{alg:backprojection} yields a valid iterate $\fa^{[1]}$. From there on the NR algorithm~\ref{alg:combo_NR} converges to a physically feasible solution within 6 iterations up to machine precision and in practice one could stop after just four iterations. 
Each iteration of our NR algorithm equates to a single evaluation of the constitutive law for either phase, independent of whether selective back-projection is needed.

Despite striking similarities with the algorithm proposed in \cite{Kabel2016}, we developed our scheme independently since we observed that it was needed during the simulations. While the authors of \cite{Kabel2016} state that "The projection and backtracking steps occur only rarely", we would like to emphasize that a single inadmissible iterate can break the entire simulation (e.g. leading to negative $J_\cvpm$ used in $\log$-energies). Further, we have investigated the percentage of the composite boxels that require selective back-projection as a function of the loading for the polyhedral inclusion, see Figure~\ref{fig:bp_polyhedron1}. 

\begin{figure}[h!]
	\centering
	\includegraphics[width=\columnwidth]{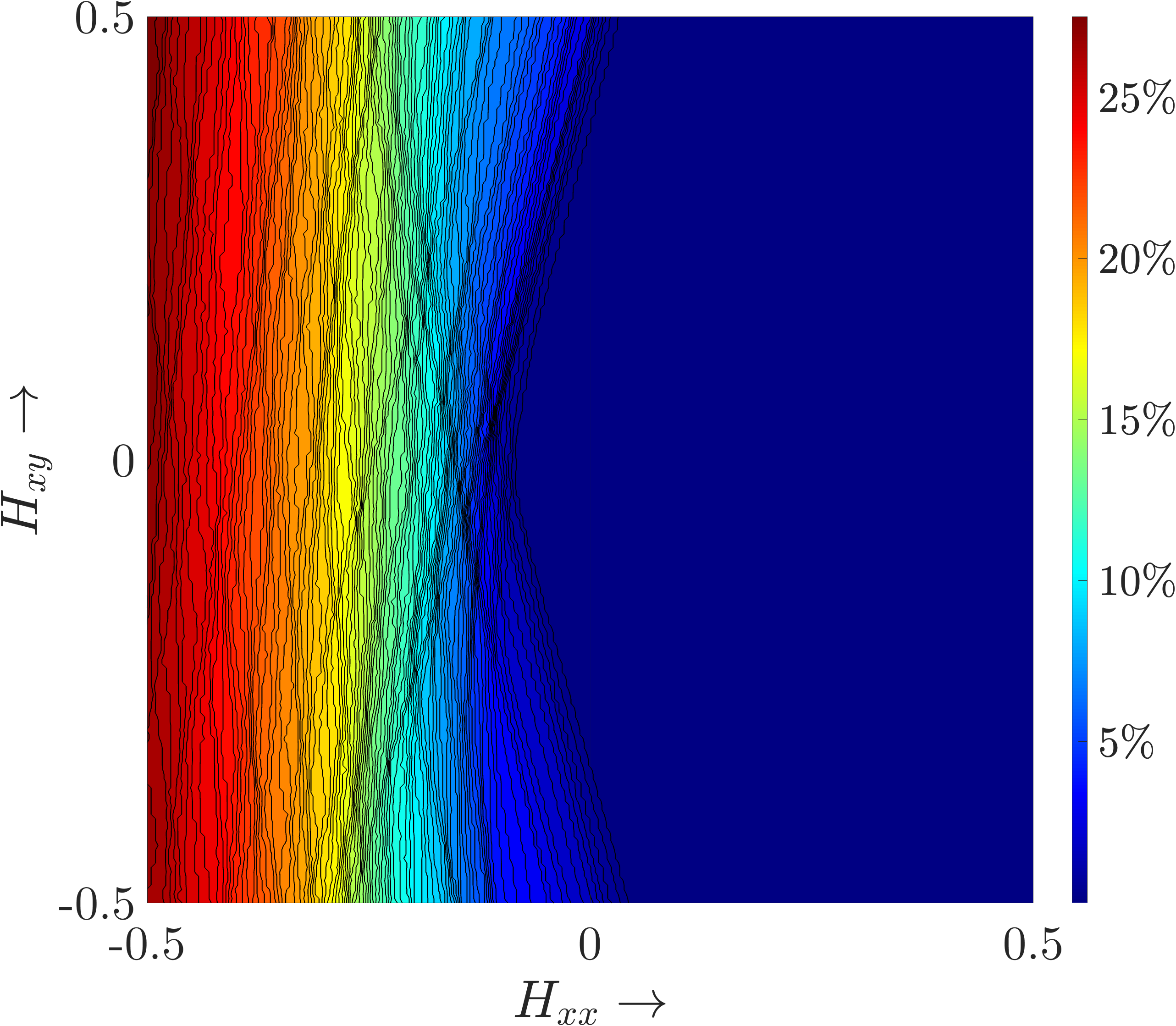}
    \caption{Percentage of boxels (Problem c.f. Sec~\ref{subsec:ex:polyhedron}) with failed naive N-R updates for mixed displacement gradient loading conditions}
    \label{fig:bp_polyhedron1}
\end{figure}

It turns out that our algorithm is needed in a majority of the composite voxels under certain loading conditions. This becomes critical in actual two-scale simulations, where spuriously high loadings are frequently observed in isolated points, especially in the compression regime.

On a side note we would like to emphasize that in our tests, the old normal direction cf. \cite{Kabel2015} further increased the number of failed iterates. Hence, using the new normal detection cf. Section \ref{sec:normal} alongside Algorithm~\ref{alg:backprojection} improves on the robustness in both regards.

\subsection{Short fiber reinforced microstructures}
\label{subsec:complex}
In this Section, we investigate fiber-reinforced composites with global fiber directional affinity to show the effectiveness of non-equiaxed composite boxels in certain use cases. We consider the two microstructures shown in Figure~\ref{fig:ex:frp1} and \ref{fig:ex:frp2} referred to as [FPR-1] and [FRP-2] respectively. 
Microstructure [FRP-1] is a fibrous microstructure with {\it almost} aligned fibers with a fiber volume fraction of $\sim 4 \%$ primarily oriented along the $X$-axis. This problem is down-scaled to varied equiaxed and non-equiaxed resolutions starting from a fine-scale image comprising 504\textsuperscript{3} voxels tabulated in Table~\ref{table:1pk_comparision-FRP1}. Similarly, the microstructure [FRP-2] hosts 150 fibers almost aligned along the $X$-axis with a fiber volume fraction of $\sim15\%$. The fibers have a length of $120\mu m$ and a diameter of $12\mu m$ and the minimum fiber distance is about $2\mu m$. This problem is down-scaled to varied equiaxed and non-equiaxed resolutions starting from a fine-scale image comprising 240\textsuperscript{3} voxels tabulated in Table~\ref{table:1pk_comparision-FRP2}.

The overall homogenized first Piola-Kirchoff stress and its phase-wise counterparts are compared towards a reference solution computed without the use of composite boxels and using the FFT solver proposed in \cite{Willot2015} while the coarse-grained models are solved using FANS HEX8R.

\begin{figure*}[!h]
    \begin{subfigure}[b]{0.45\textwidth}
         \centering
         \includegraphics[width=\textwidth]{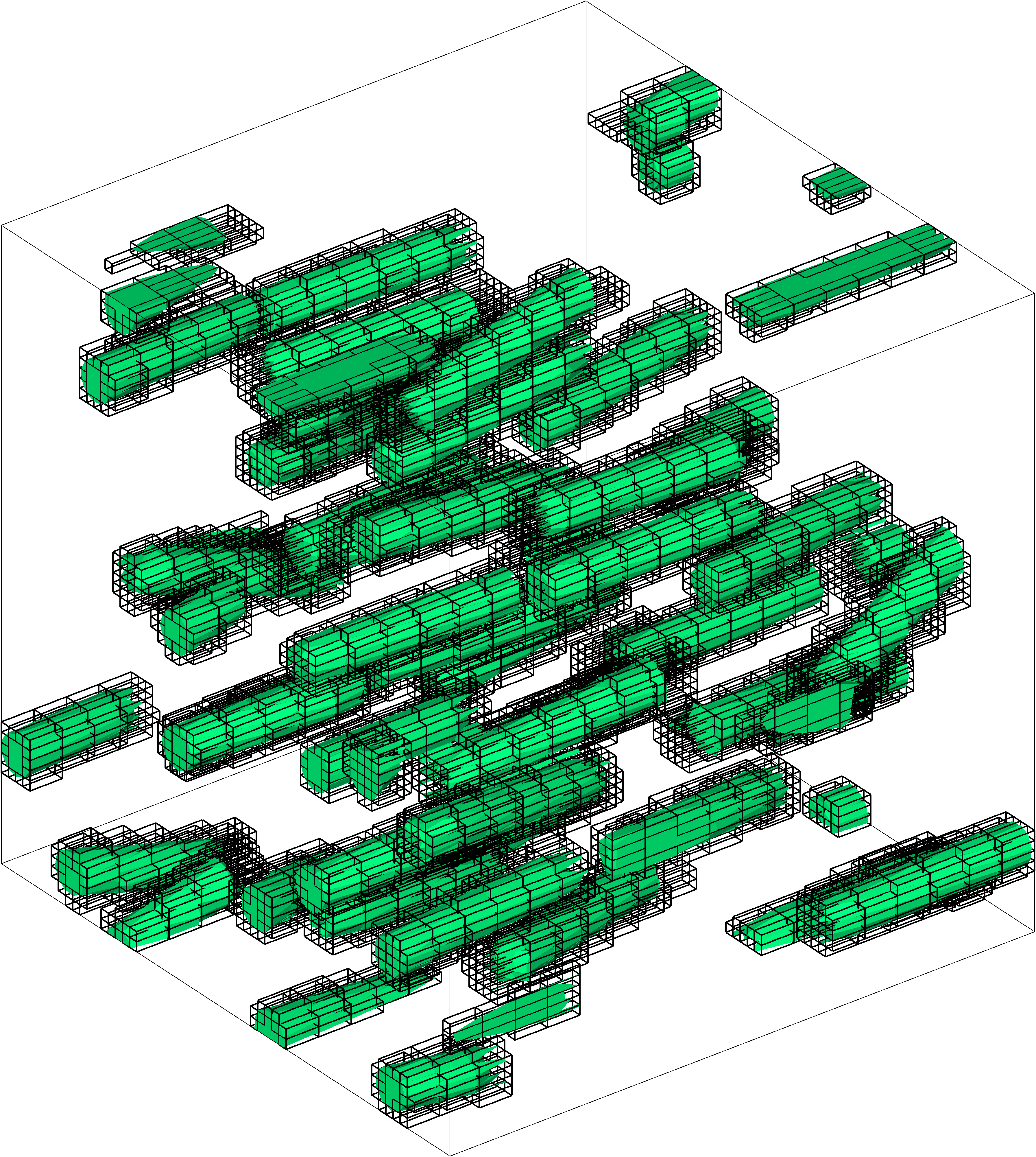}
         \caption{Fiber-reinforced composite RVE - [FRP-1]}
         \label{fig:ex:frp1}
    \end{subfigure}
    \hfill
    \begin{subfigure}[b]{0.45\textwidth}
         \centering
         \includegraphics[width=\textwidth]{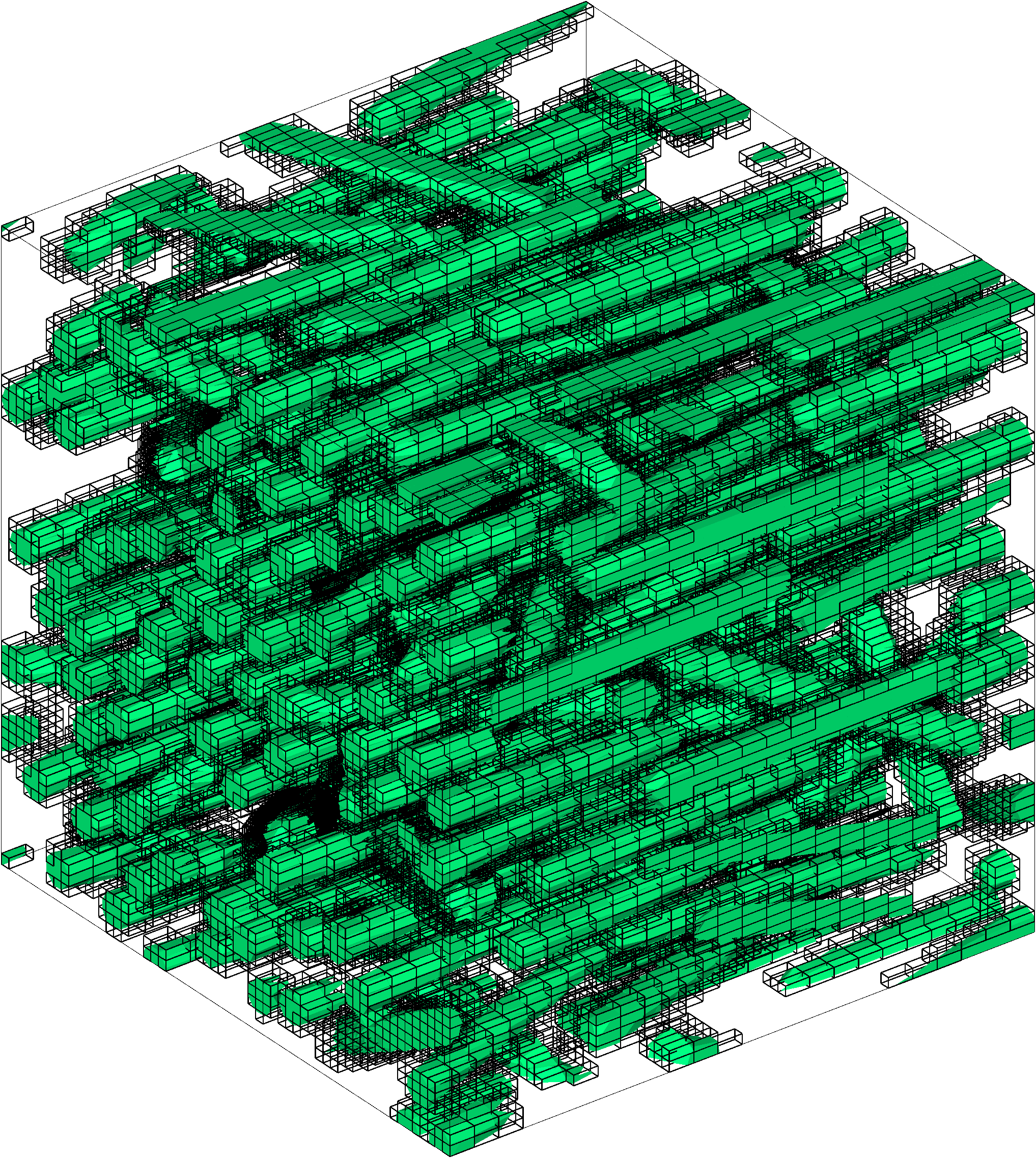}
         \caption{Fiber-reinforced composite RVE - [FRP-2]}
         \label{fig:ex:frp2}
    \end{subfigure}
    \begin{subfigure}[b]{0.45\textwidth}
         \centering
         \includegraphics[width=\textwidth]{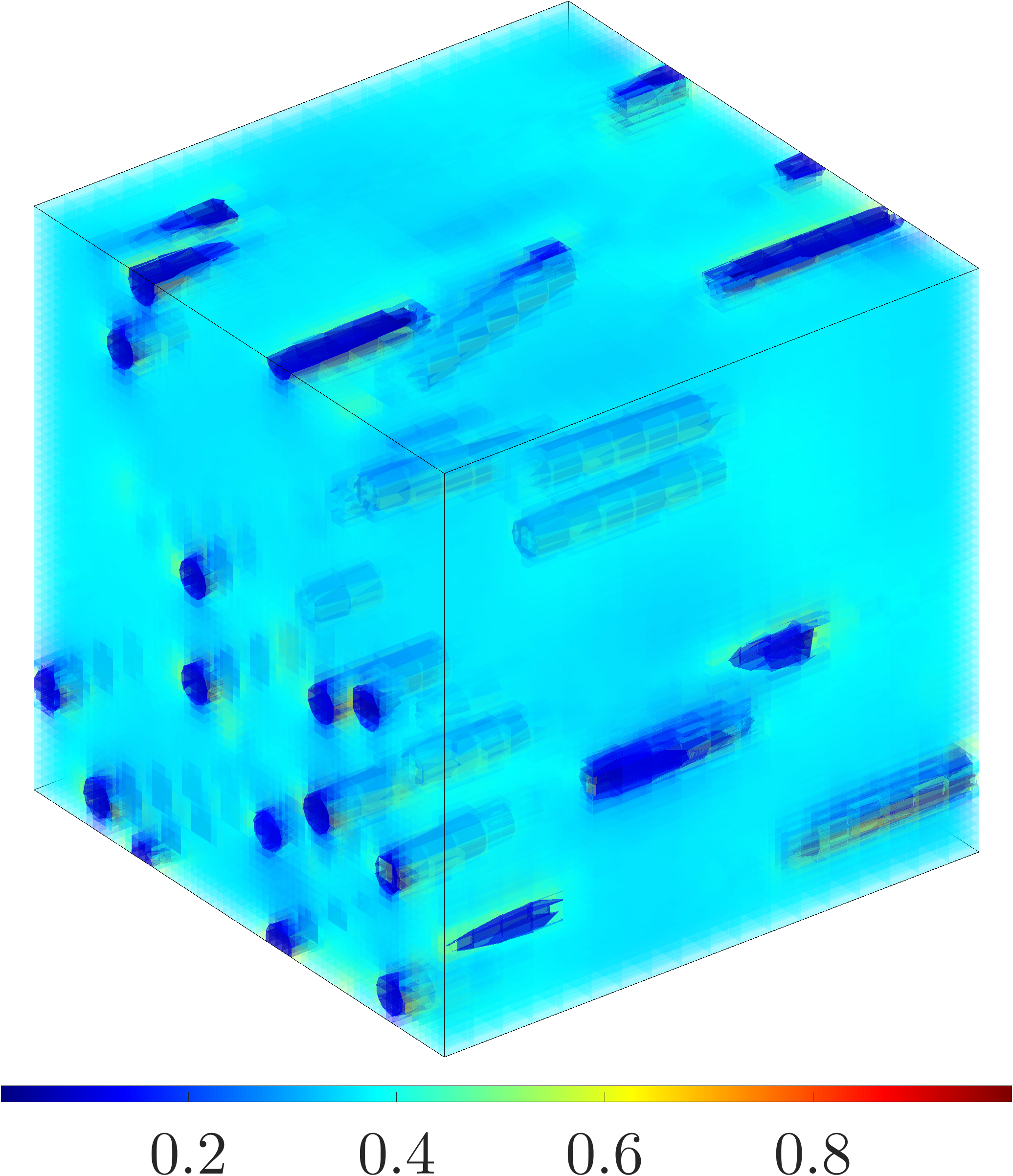}
         \caption{Green-Lagrange strain $E_{XY}$ - [FRP-1]}
         \label{fig:ex:frp1_strain}
    \end{subfigure}
    \hfill
    \begin{subfigure}[b]{0.45\textwidth}
         \centering
         \includegraphics[width=\textwidth]{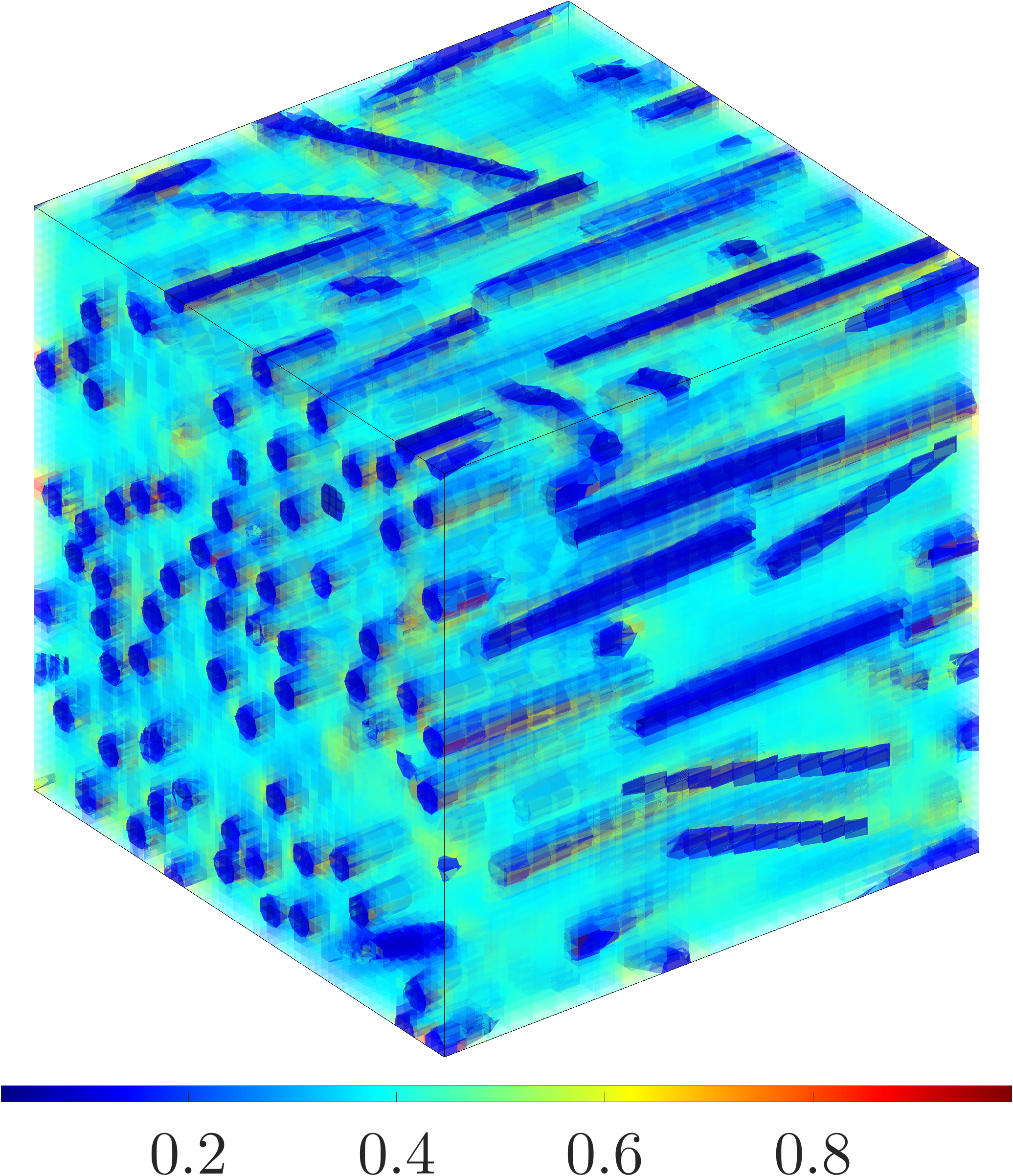}
         \caption{Green-Lagrange strain $E_{XY}$ - [FRP-2]}
         \label{fig:ex:frp2_strain}
    \end{subfigure}
    \caption{Fiber-reinforced composites: [FRP-1] $\sim 4\%$ fiber volume fraction (a) [FRP-2] $\sim 16\%$ fiber volume fraction (b); simulation results using FANS HEX8R with ComBo discretization 18$\times$63$\times$63 for [FRP-1] and 30$\times$60$\times$60 for [FRP-2] are shown in (c) and (d).}
    \label{fig:ex:FRP}
\end{figure*}

Fig.~\ref{fig:ex:frp1_strain} and \ref{fig:ex:frp2_strain} show that the ComBo discretization leads to reasonable local stress fields despite the use of massively anisotropic grids. Thereby, the resolution of the simulation can adapt to the aspect ratio of the fibers, allowing for distinct computational gains while, at the same time, the lateral resolution can remain sufficiently fine to gain (a) accurate orientation data (cf. Section~\ref{sec:normal}) and (b) to separate individual fibers. Both can be achieved without growing the number of degrees of freedom.
For instance, [FRP-1] is resolved $3.5$ times coarser along the fiber axis, yielding a speed-up and memory savings in the same order against an equiaxed grid. For [FRP-2] several ComBo discretizations are compared against a reference solution regarding $\ol{\fP}, \ol{\fP}_\cvpm$ in Table~\ref{tab:ex:FRP:P1}. The overall accuracy was better than 1\% with improvements independent of the boxel aspect ratio as the number of boxels grows. The accuracy within the inclusion phase is impressive, given that the fibers make up only $\sim 4 \%$ of the material. Similar trends are observed in Table~\ref{tab:ex:FRP:P2} for the [FRP-2] problem where the use of non-equiaxed boxels become vital in the need to resolve very small fiber distances.
Much coarser non-equiaxed resolutions still perform better than corresponding equiaxed resolution outlining the need to resolve the lateral plane of the fibers properly. It can also be observed that modest improvements are seen when the resolution along the fibers are increased while much more substantial improvements are observed when the fiber lateral plane is refined.

We also extract the Von-Mises Cauchy stress: overall and phase-wise histograms for the [FRP-2] problem as shown in Figure~\ref{fig:ex:FRP2-ST}, in order to judge the quality of the full field solution. The stress distributions for the equiaxed $48^3$ (Downscale : 125) and the non-equiaxed $30\times60\times60$ (Downscale: 128) is compared against a reference solution $240^3$. Although by the metric of vision we can observe a better agreement with the non-equiaxed resolution against the reference solution, the corresponding cumulative squared deviations of the distributions from the reference stress histogram are plotted which unequivocally states that the non-equiaxed resolution has lower error accross the board and thus captures the over-all field solution better.

\begin{figure}[h!]
\centering
\begin{subfigure}[b]{\columnwidth}
         \centering \captionsetup{justification=centering}
         \includegraphics[width=\textwidth]{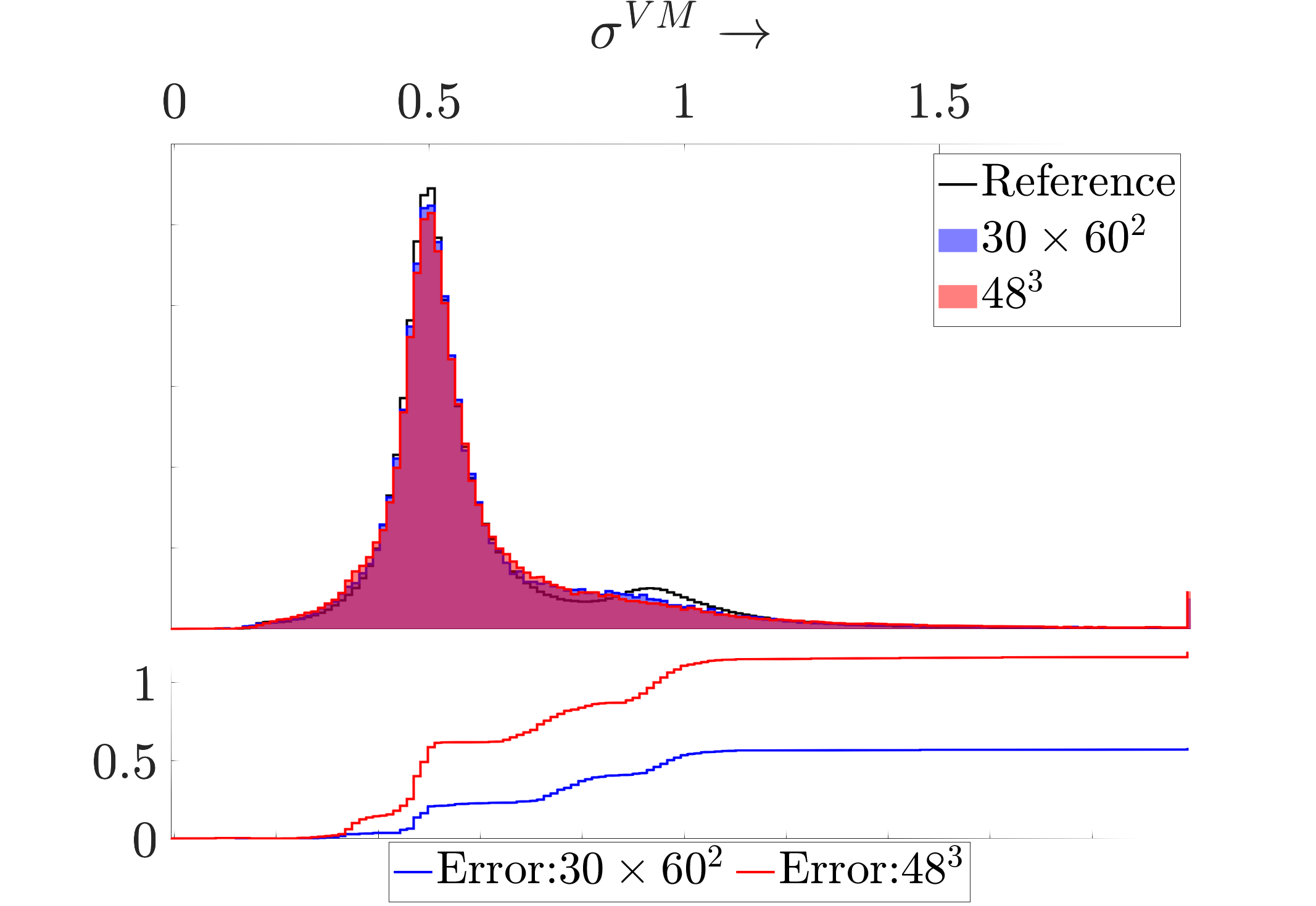}
        \label{fig:ex:frp2-STo}
    \end{subfigure}
    \begin{subfigure}[b]{\columnwidth}
         \centering \captionsetup{justification=centering}
         \includegraphics[width=\textwidth]{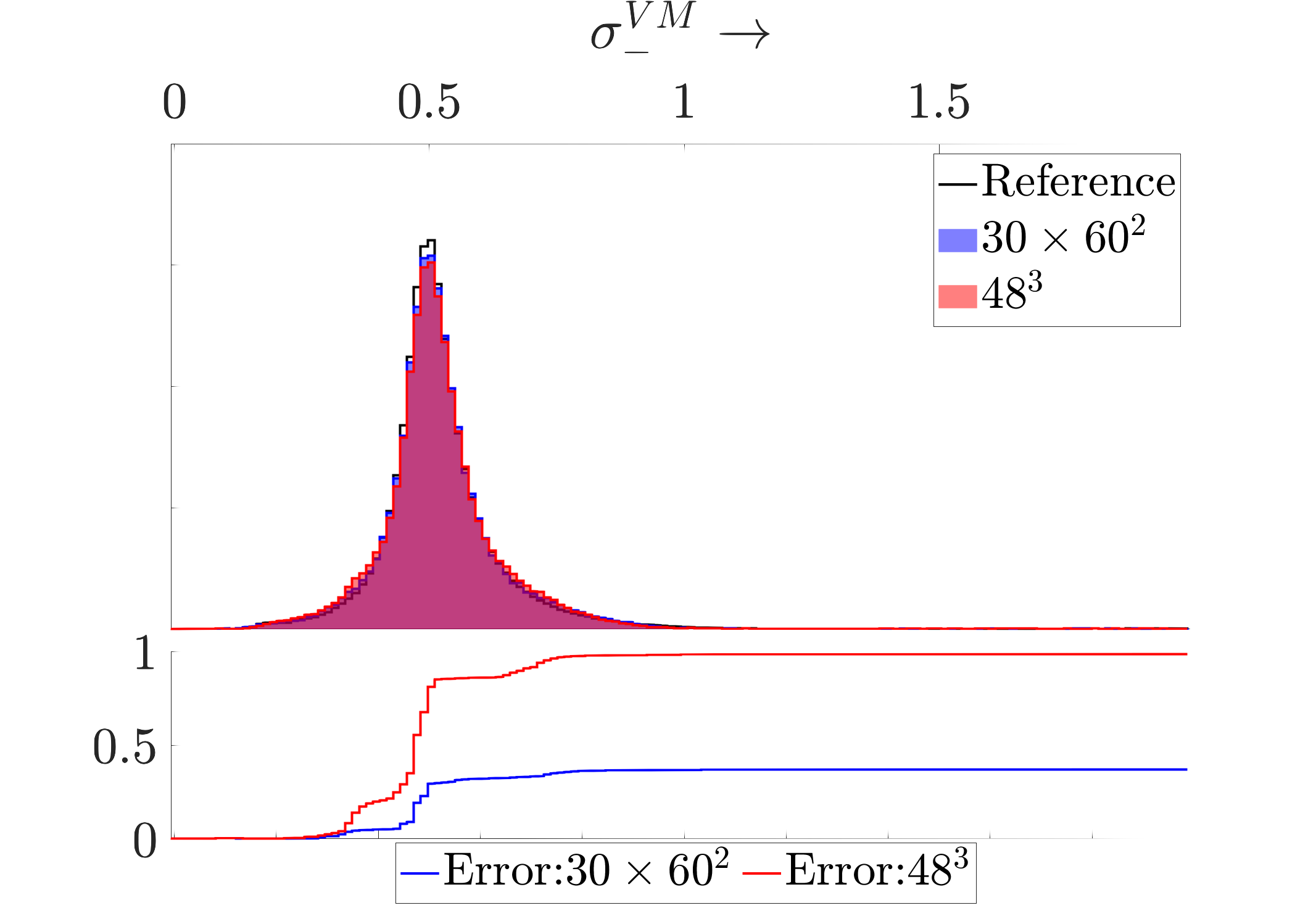}
         \label{fig:ex:frp2-ST1}
    \end{subfigure}
    \begin{subfigure}[b]{\columnwidth}
         \centering \captionsetup{justification=centering}
         \includegraphics[width=\textwidth]{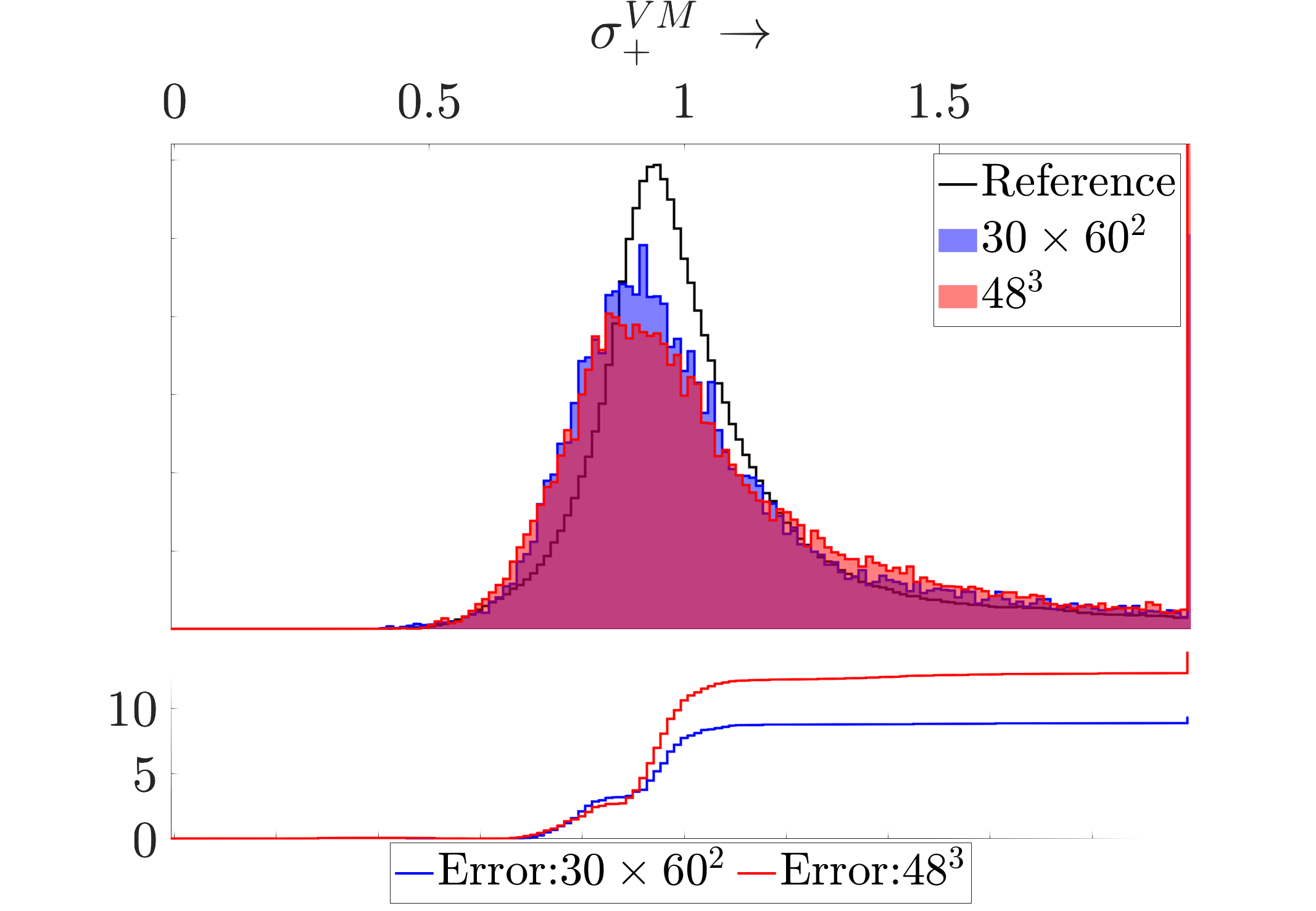}
         \label{fig:ex:frp2-ST2}
    \end{subfigure}
    \caption{Normalized weighted distribution of the Von-Mises cauchy stress (top) and the squared cumulative error (bottom) of the distribution w.r.t a reference ($240^3$) solution}
    \label{fig:ex:FRP2-ST}
\end{figure}

\begin{table*}[!h]
\begin{center}
\begin{minipage}{\textwidth}
\caption{\protect Comparing averaged 1\textsuperscript{st} Piola-Kirchoff stresses in the [FRP-1] problem cf. Section~\ref{subsec:complex}: different equiaxed and non-equiaxed ComBo resolutions, with errors against a reference solution $(504^3)$ based on \cite{Willot2015}; ComBo solutions are obtained using FANS with reduced integration \cite{Leuschner2018}; gray background highlights better result (equiaxed vs. boxels): Error defined as the relative Frobenius norm w.r.t reference solution
}\label{tab:ex:FRP:P1}
\scriptsize
\begin{tabular*}{\textwidth}{@{\extracolsep{\fill}}crrrr lrrrcl@{\extracolsep{\fill}}}
\toprule%
\multicolumn{5}{c}{Equiaxed voxels} & \multicolumn{5}{c}{non-equiaxed boxels}\\
\cmidrule(lr){1-5}
\cmidrule(lr){6-11} 
 &\multicolumn{3}{c}{Error (\%)} & & & \multicolumn{3}{c}{Error (\%)} & &\\
\cmidrule(lr){2-4}
\cmidrule(lr){7-9} 
Res &  {$\overline{\fP}$} & {$\overline{\fP}_\cvm$} & {$\overline{\fP}_\cvp$} & \multicolumn{2}{c}{Downscale} & {$\overline{\fP}$} & {$\overline{\fP}_\cvm$} & {$\overline{\fP}_\cvp$} & Res & Aspect ratio\\
\midrule
 $21^3$ & 0.894 & 0.148 & 13.916 & \bf 13824  & \bf 12348 & \cellcolor{uniSgray20} 0.654 &\cellcolor{uniSgray20}  0.110 &\cellcolor{uniSgray20}  10.205 &  $18\times 24^2$& $4:3:3$\\ 
 $24^3$ & 0.570 & 0.097 & 8.913  & \bf 9261   & \bf 9072  &\cellcolor{uniSgray20}  0.461 &\cellcolor{uniSgray20}  0.080 & \cellcolor{uniSgray20} 7.219  &  $18\times 28^2$& $14:9:9$\\ 
 $28^3$ & 0.351 & 0.061 & 5.503  & \bf 5832   & \bf 5488  & \cellcolor{uniSgray20} 0.298 & \cellcolor{uniSgray20} 0.051 &\cellcolor{uniSgray20}  4.662  &  $18\times 36^2$& $4:2:2$\\ 
 $36^3$ & 0.189 & 0.033 & 2.975  & \bf 2744   & \bf 3024  &\cellcolor{uniSgray20}  0.168 & \cellcolor{uniSgray20} 0.030 & \cellcolor{uniSgray20} 2.650  &  $24\times 42^2$& $7:4:4$\\ 
 $42^3$ & 0.119 & 0.021 & 1.871  & \bf 1728   & \bf 1792  &\cellcolor{uniSgray20}  0.075 &\cellcolor{uniSgray20}  0.017 &\cellcolor{uniSgray20}  1.222  &  $18\times 63^2$& $7:2:2$\\ 
 $63^3$ & \cellcolor{uniSgray20} 0.035 & 0.007 & \cellcolor{uniSgray20} 0.555  & \bf 512    & \bf 504   & 0.036 & \cellcolor{uniSgray20} 0.006 & 0.564  &  $36\times 84^2$& $7:3:3$\\ 
 $84^3$ &\cellcolor{uniSgray20}  0.037 & \cellcolor{uniSgray20} 0.006 & \cellcolor{uniSgray20} 0.579  & \bf 216    & \bf 224   & 0.049 & 0.007 & 0.744  &  $36\times 126^2$ & $7:2:2$\\
\botrule
\end{tabular*}
\label{table:1pk_comparision-FRP1}
\end{minipage}
\end{center}
\end{table*}

\begin{table*}[!h]
\begin{center}
\begin{minipage}{\textwidth}
\caption{\protect Comparing averaged 1\textsuperscript{st} Piola-Kirchoff stresses in the [FRP-2] problem cf. Section~\ref{subsec:complex}: different equiaxed and non-equiaxed ComBo resolutions, with errors against a reference solution $(240^3)$ based on \cite{Willot2015}; ComBo solutions are obtained using FANS with reduced integration \cite{Leuschner2018}; 
gray background highlights better result than corresponding equiaxed voxels resolution: 
Error defined as the relative Frobenius norm w.r.t reference solution
}\label{tab:ex:FRP:P2}
\scriptsize
\begin{tabular*}{\textwidth}{@{\extracolsep{\fill}}crrrr lrrrcl@{\extracolsep{\fill}}}
\toprule%
\multicolumn{5}{c}{Equiaxed voxels} & \multicolumn{5}{c}{non-equiaxed boxels}\\
\cmidrule(lr){1-5}
\cmidrule(lr){6-11} 
 &\multicolumn{3}{c}{Error (\%)} & & & \multicolumn{3}{c}{Error (\%)} & &\\
\cmidrule(lr){2-4}
\cmidrule(lr){7-9} 
Res &  {$\overline{\fP}$} & {$\overline{\fP}_\cvm$} & {$\overline{\fP}_\cvp$} & \multicolumn{2}{c}{Downscale} & {$\overline{\fP}$} & {$\overline{\fP}_\cvm$} & {$\overline{\fP}_\cvp$} & Res & Aspect ratio\\
\midrule
 &&&&& \bf 300 & \cellcolor{uniSgray20} 0.870 & \cellcolor{uniSgray20} 0.186 & \cellcolor{uniSgray20} 3.759 & $20 \times 48^2$ & $12:5:5$\\ 
 &&&&& \bf 250 & \cellcolor{uniSgray20} 0.783 & \cellcolor{uniSgray20} 0.169 & \cellcolor{uniSgray20} 3.392 & $24 \times 48^2$ & $2:1:1$\\ 
 $ 40^3$ & 1.110 & 0.234 & 4.806 & \bf216  & \bf 200  &  \cellcolor{uniSgray20} 0.726 &  \cellcolor{uniSgray20} 0.157 &  \cellcolor{uniSgray20} 3.149 &  $30\times 48^2$& $8:5:5$ \vspace{4pt}\\ 
 
  &&&&& \bf 192  & \cellcolor{uniSgray20} 0.375 & \cellcolor{uniSgray20} 0.081 & \cellcolor{uniSgray20} 1.586  &  $20 \times 60^2$ & $3:1:1$ \\
  &&&&& \bf 160  & \cellcolor{uniSgray20} 0.299 & \cellcolor{uniSgray20} 0.070 & \cellcolor{uniSgray20} 1.283  &  $24\times 60^2$& $5:2:2$\\ 
  $48^3$ & 0.688 & 0.146 & 2.982 & \bf 125 & \bf 128 & \cellcolor{uniSgray20} 0.238 & \cellcolor{uniSgray20} 0.059 & \cellcolor{uniSgray20} 1.041 &  $30 \times 60^2$ & $2:1:1$ \vspace{4pt} \\
  
  &&&&& \bf 108 & 0.254 & \cellcolor{uniSgray20} 0.039 & 0.973 & $20 \times 80^2$ & $4:1:1$ \\
  &&&&& \bf 96 & 0.228 & 0.055 & 1.004 & $40 \times 60^2$ & $3:2:2$ \\
  $60^3$ & 0.221 & 0.048 & 0.961 & \bf 64 & \bf 72 & \cellcolor{uniSgray20} 0.111 & \cellcolor{uniSgray20} 0.027 & \cellcolor{uniSgray20} 0.451 & $30 \times 80^2$ & $8:3:3$ \\

 
  
\botrule
\end{tabular*}
\label{table:1pk_comparision-FRP2}
\end{minipage}
\end{center}
\end{table*}




\section{Résumé}\label{sec:discussion} 

\subsection{Summary}
We present an extension of the composite voxel/boxel (anisotropic voxel) approach of \cite{Kabel2015} towards finite strain hyperelasticity for FFT-based homogenization schemes similar to \cite{Kabel2016}. The foundations of FFT-based homogenization are recalled in Section~\ref{sec:fft} and a detailed description of the doubly-fine material grid which can rule out some issues of the staggered grid approach cf. \cite{Schneider2016} regarding the local field accuracy is outlined in Appendix~\ref{app:staggeredGrid}.

The detailed algorithmic treatment of the composite voxels/boxels in Section~\ref{sec:combo} yields low-, i.e., $d$-dimensional nonlinear equations to be solved with explicit Hessians being provided for infinitesimal and finite strain problems; see also the cheat sheet in Appendix~\ref{app:mechanics}. The algorithmic tangent operator of the composite voxels is provided, too, and it has a sleek representation with a simplistic implementation.
 A crucial ingredient in composite boxel finite strain simulations is the back-projection scheme described in Algorithm~\ref{alg:backprojection}. It ensures admissibility of the deformation in either of the laminate phases at negligible computational overhead but much increased robustness.

When examining composite voxels and boxels, some issues with the normal detection cf. \cite{Kabel2015} were found. In Section~\ref{sec:normal}, a new algorithm for the normal identification is suggested, which leads to considerable improvements of the laminate orientation within the composite voxels. The new algorithm can also process composite boxels (ComBo) characterized by non-equiaxed coarsening which was impossible using the approach by \cite{Kabel2015}. The new procedure was shown to yield accurate normals for different microstructures. An open-source \texttt{python} implementation with examples can be accessed through the Github repository \cite{github:combo:normal}. It also features 3D tools for the visualization and a tutorial demonstrating the usage.

In Section~\ref{sec:examples} a variety of different microstructures are simulated using different FFT-based solvers, different normal detection procedures, and using different coarse-grained resolutions. The results demonstrate that the local fields using the ComBo discretization are closely matching full resolution solutions. FANS HEX8 and HEX8R \cite{Leuschner2018,Schneider2017} were found to yield the smoothest representation of the local fields. Despite the tendency of HEX8 to overestimate the stresses, this discretization has the smoothest stress fields. Moreover, the new normal detection was shown to provide notably improved accuracy on the overall stress response as well as for the phase-wise averaged stresses (see Tables~\ref{tab:ex:sphere:P1}-\ref{tab:ex:FRP:P2}). The improvement for actual boxels was even more notable. Further, the stress statistics for equiaxed and non-equiaxed resolutions with similar downscale factors hint at improved quality of the local stresses for the same downscale factor. Additionally, the new normals were shown to deliver convergence of the effective stress that depends mainly on the number of DOF, i.e., the overall amount of coarse-graining cf. Table~\ref{tab:ex:FRP:P1}-\ref{tab:ex:FRP:P2}. Computational savings of 2\,000 and beyond at errors around 1\% in the phase-averaged stresses are observed.

\subsection{Discussion}\label{subsec:discussion}
First up, the authors are thoroughly convinced that composite boxels have proven to be a valuable addition to many established FFT-based homogenization schemes. They allow for impressive computational savings in CPU time and memory (factor 2\,000 and beyond) at a modest -- if any -- sacrifice in accuracy. The accuracy was further improved by using the novel strategy for the normal detection from Section~\ref{sec:normal}. It leads to a reduction of approximately 30\% in the relative errors of the effective stress~$\ol{\fP}$ and its phase-wise counterparts $\ol{\fP}_\cvpm$ even for relatively smooth and simple microstructures. In the case of anisotropic boxels, more distinct improvements in the error were found. Surprisingly, in the presence of composite boxels, the number of DOF of the system seems to be the primary influence factor regarding the accuracy of the simulation even when pronounced boxel anisotropy is considered. A key advantage of non-equiaxed boxels is that pseudo-unidirectional fiber separation can be granted without growing the number of degrees of freedom of the problem while retaining accuracy.

We think that this can leverage the simulations in certain fields, e.g., for discontinuous short fiber composites with pronounced aspect ratios (e.g., 20 and beyond) and pseudo unidirectional fiber orientation. We are also convinced that making the source code for the normals freely available \cite{github:combo:normal} could help in rendering composite boxels an attractive choice in academia and industry. In the future, we are confident that more refined comparisons of the actual solution fields of ComBo and high resolution simulations will lead to further insights regarding accuracy and overall efficiency.

By the introduction of the doubly-fine material grid for the staggered grid discretization \cite{Schneider2016}, considerable improvements with respect to the quality of the local fields were observed. However, these come at the expense of a distinct rise in the number of constitutive evaluations and little gain regarding the overall homogenized response. Therefore, this method is probably best suited when local solution fields are sought-after. In this regard, FANS \cite{Leuschner2018}, or the equivalent FFT-$Q_1$ Hex \cite{Schneider2017} show the most confidence-inspiring results: local fields are smooth and match the reference solution closely; hour-glassing is less distinct (HEX8R) or absent (HEX8) than in the almost identical discretization of \cite{Willot2015}. It is important to state that the use of the ComBo discretization comes without issues.

The application of composite voxels/boxels in finite strain homogenization problems revealed that particular care should be taken in view of considering physical constraints: The selective  back-projection algorithm (Algorithm~\ref{alg:backprojection}) demonstrates that robustness can be gained and (sometimes unrecognized) physically questionable iterates might occur in practice. We are confident that the presented algorithm is a  leap forward. Despite this improvement, the realizable loading can still be limited, particularly when the phase volume fractions within the composite boxels tend towards 0 or 1 and the contrast in stiffness is pronounced. This can imply that the deformation gradient can approach critical states not just in the laminate but on the overall ComBo voxel (denoted $\fF_{\square}$). Finding further refinements to the simulation scheme could further boost robustness, giving rise to future research topics.

An important message is also given by demonstrating the usefulness of ComBo discretizations for a rich set of different discretizations: FANS HEX8(R)/FFT-$Q_1$ Hex \cite{Leuschner2018,Schneider2017}, DFMG, staggered grid \cite{Schneider2016} the rotated grid scheme of Willot \cite{Willot2015}, and the classical Moulinec-Suquet scheme~\cite{MoulinecSuquet1994,MoulinecSuquet1998} were all used with success and building on the same implementation. The authors would like to emphasize that the approach is, however \textit{not} limited to FFT-based schemes: regular Finite Element and Finite Difference schemes could use them, too, yielding potential benefits without the intrusiveness of, e.g., the extended finite element method (X-FEM, e.g., \cite{Loehnert:2011}). 

Related to the recent progress reported by \cite{Chen:2021}, the extension of our framework for interface mechanics in the small and finite strain setting is a promising route. Major benefits due to the improved normal orientations cf. Section~\ref{sec:normal} are expected: Both, the local solution fields as well as the (thereby influence) interfacial tractions are assumed to gain in accuracy.

Last, an equivalent to composite boxels for materials with more than two phases are urgently needed, e.g., in order to deal with polycrystals.

\backmatter

\bmhead{Supplementary information}
The normal detection algorithm is available from \cite{github:combo:normal}.

\bmhead{Acknowledgments}
Funded by Deutsche Forschungsgemeinschaft (DFG, German Research Foundation) under Germany's Excellence Strategy - EXC 2075 – 390740016. Contributions by Felix Fritzen are funded by Deutsche Forschungsgemeinschaft (DFG, German Research Foundation) within the Heisenberg program - DFG-FR2702/8 - 406068690. We acknowledge the support by the Stuttgart Center for Simulation Science (SimTech).

\section*{Declarations}
The authors declare no potential conflict of interest.

\begin{appendices}


\section{A finite difference discretization on a staggered grid} \label{app:staggeredGrid}
\label{subsec:finite_difference_staggered}
In the sequel, we describe the consistent formulation of the staggered grid discretization \cite{Schneider2016} for the geometrically nonlinear case \cite{Ospald2015}. Similar to the small strain case, the diagonal and off-diagonal components of the deformation gradient are located at different positions, compare Fig. \ref{fig:grids}. It is shown that ignoring this leads to unsatisfactory results which can be significantly improved by using a doubly-fine material grid (DFMG). In contrast to the linear elastic small strain regime, however, the DFMG needs an increased number of evaluations of the nonlinear material law.

Fix positive integers $n_1,n_2,n_3$ and consider a regular periodic grid consisting of $n = n_1 n_2 n_3$ cells, each a translate of $[0,\, h_1]\times[0,\, h_2]\times[0,\, h_3]$ with $h_j=\frac{l_j}{n_j}$ ($j=1,2,3$). Let $i^+,j^+$ and $k^+$ denote the location of the staggered grid coordinates, i.e., $a^+=a+\tfrac{1}{2}$ for $a=i,j,k$. For the $\RINDEX{i}{j}{k}$-cell the coordinates of the displacement vector $\RINDEX{U_1}{U_2}{U_3}$ are located at the cell face centers, i.e., $U_1\SINDEX{i}{j}{k}$ is located at the coordinate $\RINDEX{ih_1}{j^+ h_2}{k^+ h_3}$, $U_2\SINDEX{i}{j}{k}$ lives at $\RINDEX{i^+ h_1}{jh_2}{k^+ h_3}$ and $U_3\SINDEX{i}{j}{k}$ is associated to $\RINDEX{i^+ h_1}{j^+ h_2}{kh_3}$. The situation is displayed in Figure~\ref{fig:grids}~(a), where we have restricted ourselves to the $2$D case for clarity.

\begin{figure}
	\centering
	\begin{subfigure}[t]{.45\textwidth}
		\centering
		\includegraphics[height=.9\textwidth]{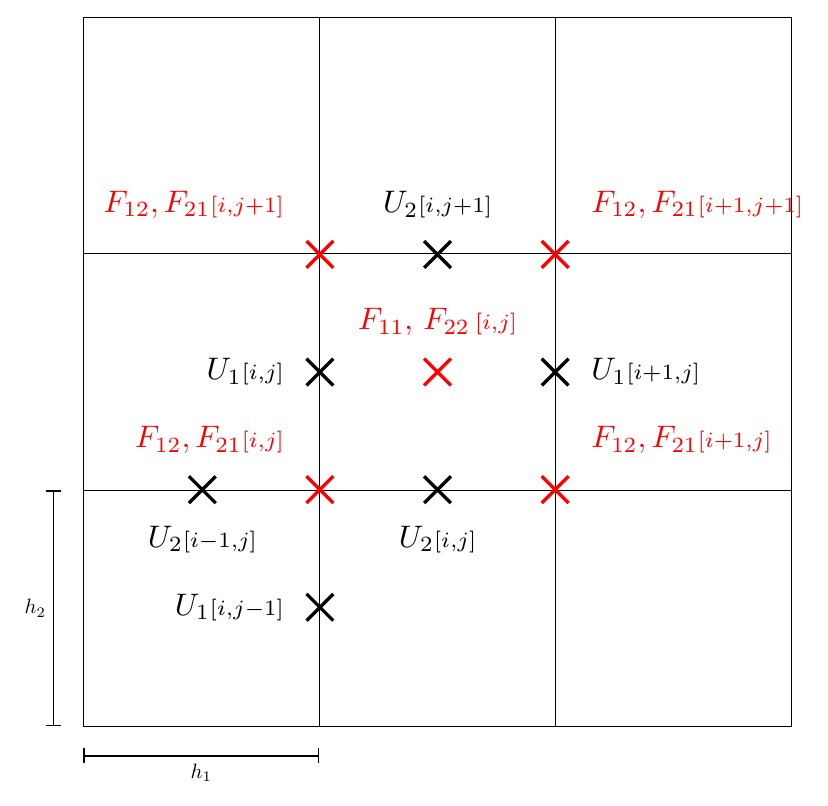}
		\caption{}
		\label{fig:variable_placement}
	\end{subfigure}%
	\hspace{0.05\textwidth}
	\begin{subfigure}[t]{.45\textwidth}
		\centering
		\includegraphics[height=.9\textwidth]{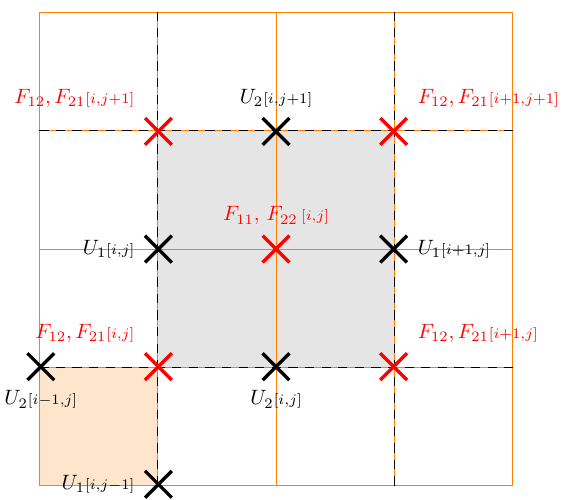}
		\caption{}
		\label{fig:material_grid}
	\end{subfigure}
	\caption{Placement of the strain and displacement variables in $2$D: (a) Location of strain and displacement variables. (b) The variables' grid (gray box) and the doubly fine material grid (orange box).}
	\label{fig:grids}
\end{figure}

\smallskip
The diagonal components $F_{aA}\SINDEX{i}{j}{k}$ ($a, A \in \{1,2,3\}$) of the deformation gradient $\fF$ are positioned on the cell centers $\RINDEX{i^+ h_1}{j^+ h_2}{k^+ h_3}$, whereas the off-diagonal strains $F_{23},F_{32}\SINDEX{i}{j}{k}$, $F_{13},F_{31}\SINDEX{i}{j}{k}$ and $F_{12},F_{21}\SINDEX{i}{j}{k}$, are located on the corresponding edge midpoints 
$\RINDEX{i^+ h_1}{jh_2}{kh_3}$, $\RINDEX{ih_1}{j^+ h_2}{kh_3}$ and $\RINDEX{ih_1}{j}{(k^+ h_3}$. For visualization, we again refer to Figure~\ref{fig:grids}~(a).

\smallskip
Displacements and gradients are connected by central difference formulae, where periodicity is understood implicitly. More precisely, introduce forward and backward difference operators on a scalar discrete field $\phi:V_n\rightarrow \ffR$ by the formulae
\begin{equation}\label{eq:forward_backward_difference}
	D_j^\pm\phi [I]=\pm\frac{\phi[I\pm e_j]-\phi [I]}{h_j}
\end{equation}
for $I\in V_n=\left\{0,1,\ldots,n_1-1\right\}\times\left\{0,1,\ldots,n_2-1\right\}\times\left\{0,1,\ldots,n_3-1\right\}$. Then we introduce the gradient operator 
\begin{equation}
	\label{eq:gradient_operator}
	\Grad \fU = \left[ 
	\begin{array}{ccc}
		D^+_1 U_1 & D^-_2 U_1 & D^-_3 U_1\\
		D^-_1 U_2 & D^+_2 U_2 & D^-_3 U_2\\
		D^-_1 U_3 & D^-_2 U_3 & D^+_3 U_3\\
	\end{array}
	\right]
\end{equation}
giving rise to the deformation gradient $\fF=I+\Grad \fU$ associated to a periodic displacement field
\[
\fU=(U_1,U_2,U_3):V_n\rightarrow \ffR^3,
\]
Similarly, there is a divergence operator, turning $\fP:V_n\rightarrow \ffR^{3\times 3}$ into
\begin{equation}
	\label{eq:div_operator}
	\mathrm{Div}\,\fP = \left[ 
	\begin{array}{c}
		D_1^- P_{11}+D_2^+ P_{21}+D_3^+ P_{31}\\
		D_1^+ P_{12}+D_2^- P_{22}+D_3^+ P_{32}\\
		D_1^+ P_{13}+D_2^+ P_{23}+D_3^- P_{33}
	\end{array}
	\right].
\end{equation}
The stress variable $P_{aB}$ is located on the same position as the corresponding $F_{aB}$.

The constitutive law $\fP(\fF)$ is defined piecewise on the regular voxel grid. For a typical material cell, shaded in gray in Figure~\ref{fig:grids}~(b), it is not clear how to apply the material law for off-diagonal strains, as the function $\fP(\fF)$ may be defined differently along the corresponding edge.

\begin{figure}
	\centering
	\begin{subfigure}[t]{.45\textwidth}
		\centering
		\includegraphics[height=.9\textwidth]{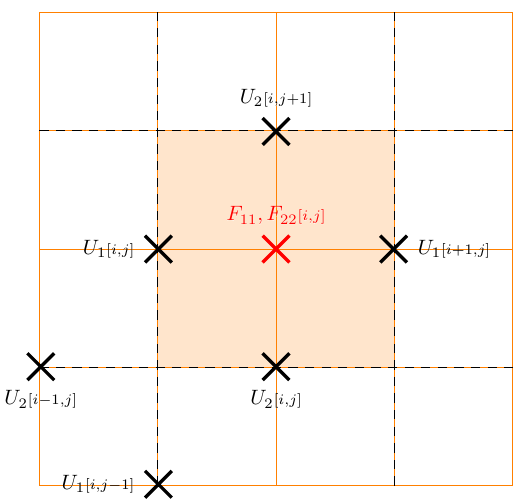}
		\caption{}
		\label{fig:F11_placement}
	\end{subfigure}%
	\hspace{0.05\textwidth}
	\begin{subfigure}[t]{.45\textwidth}
		\centering
		\includegraphics[height=.9\textwidth]{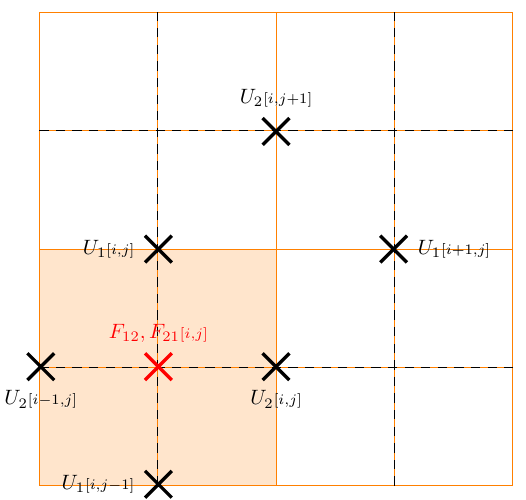}
		\caption{}
		\label{fig:F12_placement}
	\end{subfigure}
	\caption{Placement of deformation gradient variables within the doubly-fine material grid: (a) Location of $F_{11}$ (shaded in orange). (b) Location of $F_{12}$ (shaded in orange). The variables' grid is depicted by the dashed black lines.}
	\label{fig:deformationGradient_placement}
\end{figure}

\smallskip
We circumvent these problems by utilizing a doubly-fine grid, i.e., a grid with half the spacing of the original grid. Figure~\ref{fig:grids}~(b) illustrates this concept -- a typical doubly fine cell is shaded in orange. We interpret the deformation gradients and stresses as living on this doubly-fine grid. For every deformation gradient component $F_{iJ}$ and every doubly-fine cell there is precisely one $F_{iJ}$-value, as specified in Figure~\ref{fig:grids}~(a), located on the boundary of the cell. We associate this value to the doubly-fine cell. Thus, a particular value $F_{iJ}$ is distributed to the $4$ (in $2$D) or $8$ (in $3$D) adjacent doubly-fine cells, compare Figure~\ref{fig:deformationGradient_placement} for an illustration. The stress components are distributed similarly to the deformation gradients, i.e., the staggering of Figure~\ref{fig:deformationGradient_placement}, is also present for the stresses.

With this assignment, to any doubly-fine grid cell all $4$ (in $2$D) or $9$ (in $3$D) deformation gradient components are associated. The discretization just outlined directly carries over to three space dimensions easily. The variable placement in this case is shown in Figure~\ref{fig:variable_placement_3d}.
\begin{figure}
	\centering
	\includegraphics[height=.4\textwidth]{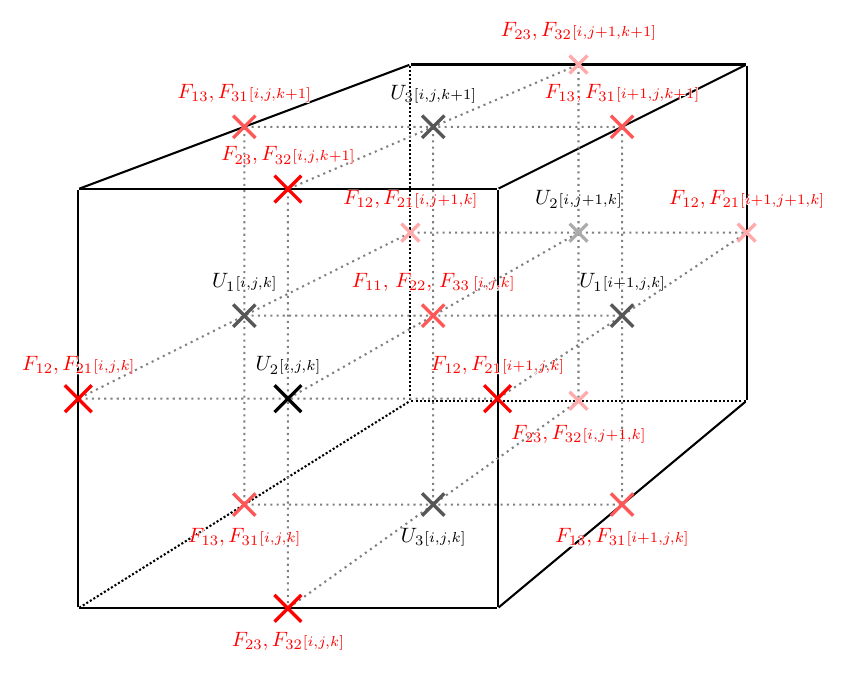}
	\caption{Placement of the deformation gradients and displacement variables in $3$D.}
	\label{fig:variable_placement_3d}
\end{figure}

\renewcommand{\fl}{\boldsymbol{l}}

We suppose that the constitutive material law is given on the original grid, i.e., each cell is associated with one material. We will elaborate on the implementation of the material law. By averaging over the combination of adjacent doubly-fine voxels, the material law can be written as
\begin{align}
	\label{eq:constitutive_law}
	P_{ab}[\fxi] &= \frac{1}{8} \sum_{\boldsymbol{l}\in\cL} 
	\lb\fP^{ab}_{\fl}[\fxi] \lb\fF^{ab}_{\fl}[\fxi]\rb\rb_{ab},
\end{align}
with 
\begin{align}
    \fxi &= [i,j,k] \\
    \cL &= \left\lbrace \fl = [l_1,l_2,l_3] \textrm{ with } l_1,l_2,l_3 \in\{ 0,1\} \right\rbrace
\end{align}
and
\begin{align*}
	\fP^{aa}_{\fl}[\fxi] &= \fP[\fxi], \, a\in\{ 1,2,3 \}, \\
	\fP^{ab}_{\fl}[\fxi] = \fP^{ba}_{\fl}[\fxi] &= \fP[\xi+\fl^{+a,+b}],\, a<b
\end{align*}
for
\begin{align*}
	\fl^{\pm1,\pm2} &= \fl^{\pm2,\pm1} = [\pm l_1,\pm l_2,0] \\
    \fl^{\pm1,\pm3} &= \fl^{\pm3,\pm1} = [\pm l_1,0,\pm l_3] \\
    \fl^{\pm2,\pm3} &= \fl^{\pm3,\pm2} = [0,\pm l_2,\pm l_3]
\end{align*}
as well as
\begin{align*}
	\fF^{aa}_{\fl}[\fxi] &= 
	\begin{pmatrix}
		F_{11}[\fxi] & F^{+1,+2}_{12,\fl}[\fxi] & F^{+1,+3}_{13,\fl}[\fxi] \\
		F^{+2,+1}_{21,\fl}[\fxi] & F_{22}[\fxi] & F^{+2,+3}_{23,\fl}[\fxi] \\
		F^{+3,+1}_{31,\fl}[\fxi] & F^{+3,+2}_{32,\fl}[\fxi] & F_{33}[\fxi] \\
	\end{pmatrix},\\
	\fF^{12}_{\fl}[\fxi] &= \begin{pmatrix}
		F_{11,\fl}^{-1,-2}[\fxi] & F_{12}[\fxi] & F_{13,\fl}^{-2,+3}[\fxi] \\
		F_{21}[\fxi] & F_{22,\fl}^{-1,-2}[\fxi]& F_{23,\fl}^{-1,+3}[\fxi] \\
		F_{31,\fl}^{-2,+3}[\fxi] & F_{32,\fl}^{-1,+3}[\fxi] & F_{33,\fl}^{-1,-2}[\fxi] \\
	\end{pmatrix},\\
	\fF^{21}_{\fl}[\fxi] &= \fF^{12}_{\fl}[\fxi], \\
	\fF^{13}_{\fl}[\fxi] &= 
	\begin{pmatrix}
		F_{11,\fl}^{-1,-3}[\fxi] & F_{12,\fl}^{+2,-3}[\fxi] & F_{13}[\fxi] \\
		F_{21,\fl}^{+2,-3}[\fxi] & F_{22,\fl}^{-1,-3}[\fxi]& F_{23,\fl}^{-1,+2}[\fxi] \\
		F_{31}[\fxi] & F_{32,\fl}^{-1,+2}[\fxi] & F_{33,\fl}^{-1,-3}[\fxi] \\
	\end{pmatrix},\\
	\fF^{31}_{\fl}[\fxi] &= \fF^{13}_{\fl}[\fxi], \\
	\fF^{23}_{\fl}[\fxi] &= 
	\begin{pmatrix}
		F_{11,\fl}^{-2,-3}[\fxi] & F_{12,\fl}^{+1,-3}[\fxi] & F_{13,\fl}^{+1,-2}[\fxi] \\
		F_{21,\fl}^{+1,-3}[\fxi] & F_{22,\fl}^{-2,-3}[\fxi]& F_{23}[\fxi] \\
		F_{31,\fl}^{+1,-2}[\fxi] & F_{32}[\fxi] & F_{33,\fl}^{-2,-3}[\fxi]
	\end{pmatrix},\\
	\fF^{32}_{\fl}[\fxi] &= \fF^{23}_{\fl}[\fxi],
\end{align*}
where
\begin{align*}
	F^{\pm a,\pm b}_{cd,\fl}[\fxi] &= F_{cd}[\fxi+\fl^{\pm a,\pm b}],\, a\neq b.
\end{align*}


\begin{sidewaystable*}[!h]
\sidewaystablefn \label{secA2}
\begin{center}
\begin{minipage}{\textheight}
\caption{Composite boxels for mechanics - Cheat sheet}
{\scriptsize
\begin{tabular*}{\textheight}{@{\extracolsep{\fill}}lcc@{\extracolsep{\fill}}}
\toprule%
 &\multicolumn{1}{c}{Finite strain theory}
&
\multicolumn{1}{c}{Infinitesimal strain theory} \\\hline \\[1ex] 

Hadamard jump & 
$\llbracket\fF\rrbracket_{\scrS^e} = \fF_\cvp - \fF_\cvm =  \lb \dfrac{1}{c_\cvp} + \dfrac{1}{c_\cvm}\rb \fa \otimes \fN$& 
$\llbracket\feps\rrbracket_{\scrS^e} = \feps_\cvp - \feps_\cvm =  \lb \dfrac{1}{c_\cvp} + \dfrac{1}{c_\cvm}\rb \fa \otimes^{\mathrm{s}} \fN$ \\[3ex]
Deformation & $\fF_\cvpm = \fF_{\square} \pm \dfrac{1}{c_\cvpm} (\fa \otimes \fN)$ &
$\feps_\cvpm = \feps_{\square} \pm \dfrac{1}{c_\cvpm} (\fa \otimes^\mathrm{s} \fN)$ \\[3ex]
Traction balance & $\llbracket\fP\rrbracket_{\scrS^e} \; \fN = \fzero$ &
$\llbracket\fsigma\rrbracket_{\scrS^e} \; \fN = \fzero$ \\[3ex]
Hessian for NR & $\fDelta_f = \fN \cdot \lb \dfrac{\ffA_\cvp}{c_\cvp} + \dfrac{\ffA_\cvm}{c_\cvm}\rb  \fN $  &
$\fDelta_f = \fN \cdot \lb \dfrac{\ffC_\cvp}{c_\cvp} + \dfrac{\ffC_\cvm}{c_\cvm}\rb  \fN $\\[3ex]
Update of $\fa$ & $\fa^{[k]} = \fa^{[k-1]} - \lb \fDelta_f^{[k-1]}\rb^{-1} f(\fa^{[k-1]})$ & $\fa =  	\fDelta_f^{-1}  \fN \cdot  \lb \ffC_\cvm - \ffC_\cvp \rb \feps_{\square} $ \\[3ex]
Stress & $\fP_\square = c_\cvp \fP_\cvp + c_\cvm \fP_\cvm$ & $\fsigma_{\square} = c_\cvp \fsigma_\cvp + c_\cvm \fsigma_\cvm$\\[3ex]
Tangent modulus & $\ffA_{\square} = \overline{\ffA}_{\mathrm{v}} 
	-  \delta \ffA \lb\fN \otimes	\fDelta_f^{-1}  \otimes \fN \rb \delta \ffA $ & $\ffC_{\square} = \overline{\ffC}_{\mathrm{v}} 
	-   \delta \ffC \lb\fN \otimes	\fDelta_f^{-1}  \otimes \fN \rb  \delta \ffC $\\[2ex] \hline \\

\textbf{Notation:} & & \\[1ex]
$\cvpm$ - material phases $\cvp$ and $\cvm$ in composite boxel &&\\
$c$ - volume fraction & & \\
$\fN$  - normal orientation of the laminate & & \\
$\fa$  - gradient jump vector $\fa \in \ffR^d$ &&\\
$\fF$  - deformation gradient &&\\
$\feps$ - infinitesimal strain &&\\
$\fP$ - first Piola-Kirchoff stress &&\\
$\fsigma$ - infinitesimal stress &&\\
$\fDelta_f$ - Newton-Raphson Jacobian $\fDelta \in \ffR^{d\times d}$ && \\
$\ffA$ - fourth order constitutive tangent $\dfrac{\partial \fP}{\partial \fF} = \dfrac{\partial^2 W}{\partial \fF \otimes \partial \fF}$ && \\
$\ffC$ - fourth order constitutive tangent $\dfrac{\partial \fsigma}{\partial \feps} = \dfrac{\partial^2 W}{\partial \feps \otimes \partial \feps}$ && \\

\botrule
\end{tabular*}}\label{app:mechanics}
\end{minipage}
\end{center}
\end{sidewaystable*}

\end{appendices}

\clearpage
\bibliography{bibliography}


\end{document}